\newcommand{\nega}{\hspace{-1em}}
\newcommand{\sucen}[2]{{#1_{1},\hdots,#1_{#2}}}
\newcommand{\suce}[1]{{#1_{1},#1_{2},#1_{3}}}
\renewcommand{\P}{\mathbb{P}}
\newcommand{\T}{\mathbb{T}}
\newcommand{\TP}{\mathbb{TP}^3}
\newcommand{\colu}[4]{\left[\begin{array}{c}#1\\#2\\#3\\#4\\
\end{array}\right]}
\newcommand{\colur}[4]{\left[\begin{array}{r}#1\\#2\\#3\\#4\\
\end{array}\right]}
\newcommand{\m}{\medskip}
\newcommand{\N}{\mathbb{N}}
\newcommand{\R}{\mathbb{R}}
\newcommand{\SSS}{\mathcal{S}}
\renewcommand{\int}{\operatorname{int}}
\newcommand{\adiff}{\operatorname{adiff}}
\newcommand{\col}{\operatorname{col}}
\newcommand{\card}{\operatorname{card}}
\newcommand{\trop}{\operatorname{trop}}
\newcommand{\rk}{\operatorname{rk}}
\newcommand{\diag}{\operatorname{diag}}
\newcommand{\spann}{\operatorname{span}}
\newcommand{\tdist}{\operatorname{tdist}}
\newcommand{\dist}{\operatorname{dist}}
\newcommand{\cnst}{\operatorname{cnst}}
\newtheorem{thm}{Theorem}
\newtheorem{lem}[thm]{Lemma}
\newtheorem{cor}[thm]{Corollary}
\newtheorem{ex}[thm]{Example}
\newtheorem{exr}[thm]{Exercises}
\newtheorem{rem}[thm]{Remark}
\title{Six combinatorial classes of maximal convex tropical tetrahedra}
\author{A. Jim\'{e}nez 
\and
M.J. de la Puente\thanks{Partially supported by UCM research group 910444. Corresponding author}}
\date{} 
\begin{document}
\maketitle

AMS class.: 15A80;  52B10; 14T05.

Keywords and phrases: combinatorial class, tropical tetrahedron, convex 3--dimensional body, circulant matrix.

\centerline{\textbf{Short title}: Six classes of maximal convex tropical tetrahedra}

\begin{abstract} In this  paper we bring together tropical linear algebra  and convex 3--dimensional bodies.
We show how certain convex 3--dimensional bodies having 20 vertices and 12 facets can be encoded in a $4\times 4$ integer  zero--diagonal matrix  $A$.
A   tropical tetrahedron  is the set of points in $\R^3$ tropically spanned by  four  given tropically non--coplanar points. It is a near--miss Johnson solid. The coordinates of the points are  arranged as the columns of a $4\times 4$ real matrix $A$ and the tetrahedron is denoted $\spann(A)$.
We study tropical tetrahedra which are convex  and  maximal, computing the extremals of $\spann(A)$ and the length (tropical or Euclidean) of its edges.
Then, we classify convex  maximal tropical tetrahedra, combinatorially.   There are six classes, up to symmetry and chirality. Only one class contains symmetric solids and only one contains chiral ones. In the way, we show that the combinatorial type of the regular dodecahedron does not occur here. We also prove that convex maximal tropical tetrahedra are not vertex--transitive, in general. We give families of examples, circulant matrices providing examples for two classes. A crucial role   is played  by the $2\times 2$ minors of $A$.
\end{abstract}

\section{Introduction} \label{sec:intro}

Here $\oplus=\max$, $\odot=+$ are the tropical operations: addition and multiplication.
In classical mathematics, a tetrahedron is the span of four non--coplanar points in 3--dimensional space. We have a choice to make: affine
or projective geometry. In tropical mathematics, a  tetrahedron should be the  tropical
span of four tropically  non--coplanar points in 3--dimensional space. It is known that this set is neither pure--dimensional nor convex, in general.

Here we study tropical tetrahedra which are both convex and maximal. The coordinates of the points are  arranged as the columns of a $4\times 4$ real matrix $A$ and the tetrahedron is denoted $\spann(A)$.
A leading role   is played  by the $2\times 2$ minors of $A$. Firstly, because they provide the tropical length  of the edges  of $\spann(A)$
(theorem \ref{thm:adiffs}). Secondly, because they characterize maximality of
$\spann(A)$ (lemma \ref{lem:maximal}).
The $2\times 2$ minors of $A$ also yield the types of the tropical
lines $L_{ij}:=L(\col(A,i),\col(A,j))$ and, with the type information we are able compute the coordinates of all the extremals of $\spann(A)$. 

\m
In section \ref{sec:tetra}, we show that the f--vector (i.e., vector whose components are the number of vertices, edges and facets) of  a maximal convex $\spann(A)$ is  $(20,30,12)$, with  polygon--vector  (defined in p.\pageref{dfn:polygonvector}) $(0,f_4,f_5,f_6)$, where $12=f_4+f_5+f_6$.
In fact, the polygon--vector can only be $(0,2,8,2)$, $(0,3,6,3)$ or $(0,4,4,4)$ and so the combinatorial type of the regular dodecahedron does not arise in this setting.

Our classification of the combinatorial types of $\spann(A)$ depends on the type--vector (defined in p. \pageref{dfn:type--vector})
and number and adjacency of the hexagonal facets in $\spann(A)$ (knowing the polygon--vector is not enough!). The type--vector can be  $(2,2,2)$, $(3,2,1)$, $(3,3,0)$, $(4,1,1)$ or $(4,2,0)$, up to a permutation.
\textbf{Six} different  classes  exist, although just five classes have been announced in \cite{Joswig_K}.


\m

This paper arose from reading  \cite{Joswig_K} and also \cite{Develin_Sturm}.
A classification of tropical tetrahedra (with a less restrictive definition than ours) has been announced in \cite{Joswig_K}, containing  \textbf{five} combinatorial types.
In
\cite{Joswig_K},  a very brief general description, plus five  matrices and five
pictures are provided in only half a page.  There, the f--vector  is  claimed to be $(20,30,12)$  for every such $\spann(A)$. Notice that this is
the f--vector of the \textbf{regular dodecahedron} $\cal D$. We have two concerns about the five--item list in \cite{Joswig_K}. First, no description of each particular
  item is given.  Which are the polygon--vectors, i.e., which polygons appear as facets and how many of each, for each item? Just looking at \cite{Joswig_K}, we cannot tell.
  Moreover, we cannot answer another natural question such as: does any item in the list
have the combinatorial type of $\cal D$? These  questions remain unanswered after looking at   the web page by the same authors,
http://wwwopt.mathematik.tu-darmstadt.de/~kulas/Polytrope.html, containing additional information.
Our second concern is that the list  in \cite{Joswig_K} misses one  class. It misses class \ref{class:third}, having type--vector  $(3,2,1)$ and  two adjacent hexagons (the polygon--vector is $(0,2,8,2)$); see examples \ref{ex:B15_y_B12}. This class is very important, since it shows that $\spann(A)$ is not vertex--transitive, in general.

\m
The new material is found in section \ref{sec:tetra}. The previous sections are introductory. We have tried to make the paper self--contained. We have computed a lot of examples, thoroughly, for the benefit of the reader.

\m
An earlier extended version of this paper was uploaded on 18/05/2012 in arXiv 1205.4162 with the title "Characterizing the convexity of the $n$--dimensional tropical simplex and the six convex classes in $\R^3$".


\begin{figure}[h]
 \centering
  \includegraphics[keepaspectratio,width=14cm]{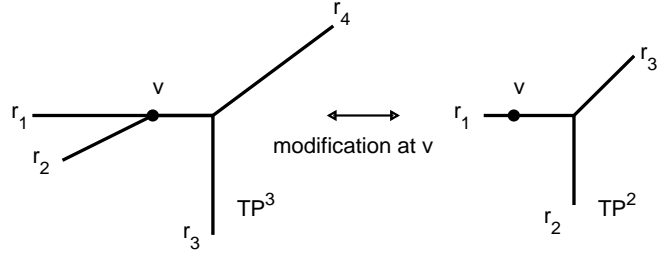}\\
  \caption{Modification of a tropical line at $v$: going from $\TP$ to $\T\P^2$ and back.}
  \label{figure_01}
\end{figure}

\section{Background and notations}\label{sec:background}
We will work  over  $\T:=(\R,\oplus,\odot)$, where $\oplus=\max$ is tropical addition and $\odot=+$ is tropical multiplication.
For $n\in \N$, $[n]$ denotes the set $\{1,2,\ldots,n\}$. For $n,m\in\N$, $\R^{m\times n}$ denotes the set of real matrices having $m$ rows and $n$ columns.
Define tropical sum and product of matrices following the same rules of classical linear algebra, but replacing addition (multiplication) by tropical addition (multiplication).
We will never use classical sum or multiplication of matrices, in this note. $A\odot B$ will be written $AB$, for simplicity,
 for matrices $A,B$.

\m
The \textbf{tropical determinant} (also called tropical permanent) of   $A=(a_{ij})\in\R^{n\times n}$
 is defined as $$|{A}|_{\trop}=\max_{\sigma\in S_n}\{a_{1\sigma(1)}+a_{2\sigma(2)}+\cdots+a_{n\sigma(n)}\},$$
where $S_n$ denotes the permutation group in $n$ symbols. The
matrix $A $ is  \textbf{tropically singular} if this \emph{maximum
 is attained, at least, twice}. Otherwise, $A$ is \textbf{tropically regular}.

\m


The \textbf{projective tropical $n-1$--dimensional space}, denoted $\T\P^{n-1}$ is the quotient  $\R^{n}/\sim$, where $(\sucen an)\sim(\sucen bn)$ if and only if  $(\sucen an)=(\sucen {\lambda+b}n)$, for some $\lambda\in\R$. The  class of $(\sucen an)$ will be denoted $[\sucen an]$.
Let $\sucen Xn$ denote the coordinates on $\T\P^{n-1}$. 
For any point in $\T\P^{n-1}$, we can choose a representative $(\sucen a{n-1},0)$, in which case we say that we
\textbf{work in $X_n=0$}. This allows us to identify $\T\P^{n-1}$ with $\R^{n-1}$. We will make this identification throughout the paper.

From now on, coordinates of points will be written in columns.

\m

\textbf{Linear spaces in $\T\P^2$ and $\TP$.}
It is well--known that tropical lines, planes, hyperplanes are  piece--wise linear and piece--wise convex objects;
see \cite{Akian_HB,Baccelli,Butkovic,Cuninghame_New,Gathmann, Gaubert,Gunawardena,Itenberg,Mikhalkin_T,Richter,Viro_D}. Let us call the  pieces (linear convex sets)
\textbf{building blocks}. For instance, the building blocks of tropical lines  are unbounded rays and, sometimes, segments. And  the building blocks of tropical  planes are
unbounded quadrants. These building blocks have rational slopes, i.e., orthogonal vectors to them can be chosen to have integer coordinates.

\m
Fix $a_j\in \R$. The tropical
linear form
\begin{equation}
a_1\odot X_1\oplus a_2\odot X_2\oplus a_3\odot X_3=\max\{a_1+ X_1, a_2+ X_2, a_3+ X_3\}
\end{equation}
defines a tropical line  in $\T\P^2$, denoted  $\Pi_a$: it is the set of points $[x_1,x_2,x_3]^t$ in $\T\P^2$ where the
\textbf{maximum is attained twice, at least.} Notice that the maximum is attained three times at $[-a_1,-a_2,-a_3]^t$. This point
is called the \textbf{vertex}  of $\Pi_a$.
Working in $X_3=0$ (i.e., identifying  $\T\P^2$ with $\R^2$ in a certain way), the graphical representation of $\Pi_a$ is the union of three rays $\suce r$ meeting at point $(a_3-a_1,a_3-a_2)$. The ray $r_j$ points towards the negative $X_j$ direction, for $j=1,2$, and $r_3$ points towards the positive $X_1=X_2$ direction;   see right--hand--side of
figure \ref{figure_01}.

%
%


The tropical
linear form
\begin{equation}
a_1\odot X_1\oplus a_2\odot X_2\oplus a_3\odot X_3\oplus a_4\odot X_4=\max\{a_1+ X_1, a_2+ X_2, a_3+ X_3, a_4+ X_4\}
\end{equation}
defines a tropical plane  $\Pi_a$ in $\TP$:  $\Pi_a$ is the set of points $[x_1,x_2,x_3,x_4]^t$ in $\TP$ where the \textbf{maximum is attained twice,
at least.} Notice that the maximum is attained four times at  $[-a_1,-a_2,-a_3,-a_4]^t$. This special point, denoted $v^{\Pi_a}$, is called
the \textbf{vertex}  of $\Pi_a$.
Working in $X_4=0$, we get $\Pi_a$ as a subset of $\R^3$: it is the union of six  2--dimensional  quadrants; see figure \ref{figure_02}.

\begin{figure}[h]
 \centering
  \includegraphics[keepaspectratio,width=14cm]{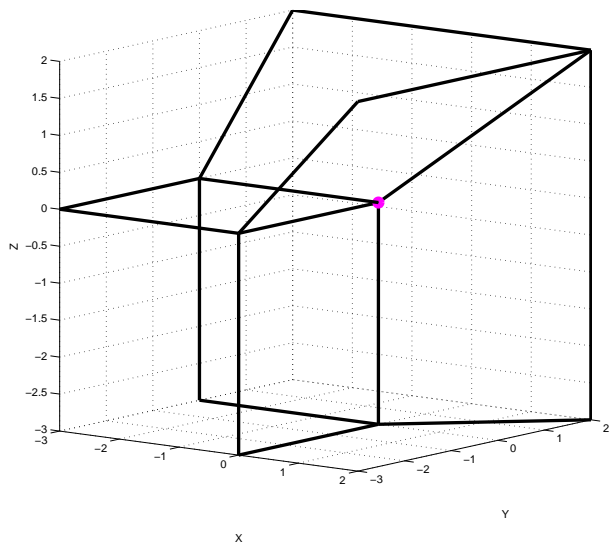}\\
  \caption{A tropical plane is the union of six closed quadrants.}
  \label{figure_02}
\end{figure}

If $a_4=0$ then, the quadrants of $\Pi_a$ are:
\begin{itemize}
\item $X_1=a_1$, $X_2\le a_2$, $X_3\le a_3$, denoted $Q_1$,\label{not:quadrants}
\item $X_1\le a_1$, $X_2=a_2$, $X_3\le a_3$, denoted $Q_2$,
\item $X_1\le a_1$, $X_2\le a_2$, $X_3=a_3$, denoted $Q_3$,
\item $a_1\le X_1=X_2$, $X_3\le X_1=X_2$, denoted $Q_{12}$,
\item $a_2\le X_2=X_3$, $X_1\le X_2=X_3$, denoted $Q_{23}$,
\item $a_3\le X_3=X_1$, $X_2\le X_3=X_1$, denoted $Q_{31}$.
\end{itemize}

\m
It is well--known that three tropically non--collinear points $p,q,r$  in $\TP$ determine a unique tropical plane, denoted  $L(p,q,r)$,  which passes through them; see
\cite{Richter}. The vertex  of $L(p,q,r)$ is computed from the coordinates of $p,q,r$ by the \emph{tropical Cramer's rule}; see \cite{Richter,Tabera_Pap}.
Inside $L(p,q,r)$, the points $p,q,r$ can be arranged as follows:
\begin{itemize}
\item all three in one quadrant, or
\item two in one quadrant, the third one in another quadrant, or
\item each one in a different quadrant (this is the generic case).
\end{itemize}

\begin{figure}[h]
 \centering
  \includegraphics[keepaspectratio,width=14cm]{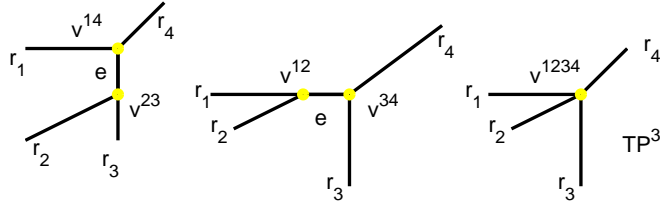}\\
  \caption{Graphical representation of some tropical lines in $\TP$: types  $[14,23]$ on the left, $[12,34]$ center,  tetrapod, on the right. These objects lie in $\R^3$; they are non--planar. The ray $r_4$ points towards the positive direction $X_1=X_2=X_3$.}
  \label{figure_03}
\end{figure}

We have already explained that a  
tropical line  in $\T\P^2$ is a \textbf{tripod}: it has one vertex and three rays. Now, a tropical line in $\TP$ is \textbf{not homeomorphic} to a  tripod (and this is a crucial difference between classical and tropical mathematics: lines in $\P^n$ and $\P^m$ are homeomorphic, if $n\neq m$). 
\label{dfn:lines} A  generic tropical line $L$ in $\TP$ has two vertices. Its building blocks are one edge $e$ (i.e., a segment of finite length) and four unbounded rays $r_1,r_2,r_3,r_4$. Ray $r_j$ points in the $j$--th  negative  coordinate direction, for $j=1,2,3$ and  $r_4$ points towards the positive direction of $X_1=X_2=X_3$; see  figure \ref{figure_03} (unfortunately, our planar graphical representation of tropical lines in $\TP$  cannot show angles properly).

\m

Any line $L$  in $\TP$ belongs to one of the following types:
\begin{equation}
[12,34],\qquad [13,24],\qquad [14,23],\qquad [1234].
\end{equation}
Let us explain further. For type $[i j, k l]$, the vertices of $L$ will be named $v^{i j}$ and $v^{kl}$ and the edge $e$  joins them.  Moreover,  $e$, $r_i$ and $r_j$ meet  at $v^{ij}$; same for  $e$, $r_k$, $r_l$ and $v^{kl}$.
 If the edge $e$   does not exist in a given  line $L$, then the two vertices of $L$ coincide and the type of $L$ is $[1234]$.   
In this case we say that $L$ is a \textbf{tetrapod}. \label{dfn:tetrapod}
Types of tropical lines in $\TP$ can be written in various ways: for example,  $[12,34]=[21,34]=[21,43]=[34,12]$.

\m
Let us see how do types arise.
It is well--known that two different points $p,q$  in $\TP$ determine a unique tropical line, denoted $L(p,q)$, which passes through them.\label{dfn:line_pq} Following \cite{Speyer_Sturm}, the type and vertices of $L(p,q)$ are computed  as follows.
For $1\le i<j\le4$, consider  the   $2\times 2$ tropical minors:
\begin{equation}\label{not:mij}
m_{ij}=\left|\begin{array}{cc} p_i&q_i\\p_j&q_j\\
\end{array}\right|_{\trop}\nega.
\end{equation} These minors satisfy the \textbf{tropical Pl\"{u}cker relation}, i.e., the following maximum is attained twice, at least:
\begin{equation}
\max\{m_{12}+m_{34}, m_{13}+m_{24}, m_{14}+m_{23}\}.
\end{equation}
Then
\begin{itemize}
\item type $[12,34]$ arises when $m_{12}+m_{34}<m_{13}+m_{24}=m_{14}+m_{23}$,
\item type $[13,24]$ arises when $m_{13}+m_{24}<m_{12}+m_{34}=m_{14}+m_{23}$,
\item type $[14,23]$ arises when $m_{14}+m_{23}<m_{12}+m_{34}=m_{13}+m_{24}$,
\item type $[1234]$ arises when $m_{12}+m_{34}=m_{13}+m_{24}=m_{14}+m_{23}$.
\end{itemize}

\m
Now, a natural question is to determine the line $L(p,q)$ (and its type), for two given points $p\neq q$. In order to do so, it is enough to determine the vertex  or vertices of $L(p,q)$.
A point $x$ belongs to $L(p,q)$ if and only if:
\begin{equation}
\rk\left[\begin{array}{ccc}
p_1&q_1&x_1\\p_2&q_2&x_2\\p_3&q_3&x_3\\p_4&q_4&x_4\\
\end{array}\right]_{\trop}\nega=2.
\end{equation}
This \textbf{tropical rank} condition  means that the value of each of the following $3\times 3$ tropical minors  is attained twice, at least (see \cite{Develin_Santos_Sturm} for tropical rank issues):
\begin{equation}\label{eqn:omit1}
\left|\begin{array}{ccc}
p_2&q_2&x_2\\p_3&q_3&x_3\\p_4&q_4&x_4\\
\end{array}\right|_{\trop}\nega =\max\{x_2+m_{34},x_3+m_{24},x_4+m_{23}\}
\end{equation}
\begin{equation}\label{eqn:omit2}
\left|\begin{array}{ccc}
p_1&q_1&x_1\\p_3&q_3&x_3\\p_4&q_4&x_4\\
\end{array}\right|_{\trop}\nega =\max\{x_1+m_{34},x_3+m_{14},x_4+m_{13}\}
\end{equation}
\begin{equation}\label{eqn:omit3}
\left|\begin{array}{ccc}
p_1&q_1&x_1\\p_2&q_2&x_2\\p_4&q_4&x_4\\
\end{array}\right|_{\trop}\nega =\max\{x_1+m_{24},x_2+m_{14},x_4+m_{12}\}
\end{equation}
\begin{equation}\label{eqn:omit4}
\left|\begin{array}{ccc}
p_1&q_1&x_1\\p_2&q_2&x_2\\p_3&q_3&x_3\\
\end{array}\right|_{\trop}\nega =\max\{x_1+m_{23},x_2+m_{13},x_3+m_{12}.\}
\end{equation}
Now, for any $u $  positive, big enough  real number, it is obvious that the points $x_j(u)$ below make the maxima attained, at least, twice, in expressions (\ref{eqn:omit1}), (\ref{eqn:omit2}), (\ref{eqn:omit3}), (\ref{eqn:omit4}),
respectively:
\begin{equation}\label{eqn:r12}
x_1(u)=[-u , -m_{34},-m_{24},-m_{23}]^t\quad x_2(u)=[ -m_{34},-u ,-m_{14},-m_{13}]^t,
\end{equation}
\begin{equation}\label{eqn:r34}
x_3(u)=[-m_{24},-m_{14},-u , -m_{12}]^t,\quad x_4(u)=[ -m_{23},-m_{13},-m_{12}, -u ]^t
\end{equation}
  And as $u $ goes from zero to $+\infty$, the point  $x_j(u)$  moves along  a ray $r_j$.

Say the type of $L(p,q)$ is $[12,34]$.
Then  a value of $u $ can be determined so that $x_1(u)=x_2(u)=v^{12}$ (resp. $x_3(u)=x_4(u)=v^{34}$), obtaining the following vertices for $L(p,q)$:
\begin{equation}\label{eqn:v^12}
v^{12}=[m_{13}-m_{23}-m_{34},-m_{34},-m_{24},-m_{23}]^t.
\end{equation}
\begin{equation}\label{eqn:v^34}
v^{34}=[-m_{24},-m_{14},m_{13}-m_{12}-m_{14},-m_{12}]^t.
\end{equation}

Say the type of $L(p,q)$ is $[13,24]$.
Similar calculations yield  the following vertices for $L(p,q)$:
\begin{equation}\label{eqn:v^13}
v^{13}=[m_{24},-m_{14}, -m_{24}-m_{14}+m_{34},-m_{12}]^t.
\end{equation}
\begin{equation}\label{eqn:v^24}
v^{24}=[-m_{23},-m_{13},-m_{12}, -m_{13}-m_{12}+m_{14}]^t.
\end{equation}

Say the type of $L(p,q)$ is $[1234]$.
Then   $x_1(u)=x_2(u)=x_3(u)=x_4(u)$  for some  $u $, providing the vertex
\begin{equation}
v^{1234}=[m_{13}+m_{14}-m_{34},m_{12},m_{13},m_{14}]^t.
\end{equation}
 Computations are similar for type $[14,23]$.

\m
Notice that the coordinates of  the vertices of $L(p,q)$ depend on the type of $L(p,q)$.
\m

\textbf{Lines as tropical algebraic varieties.}
An integer vector is \emph{primitive} if its coordinates are relatively prime.
It is well--known that a \emph{tropical algebraic variety} $V$ satisfies the \textbf{balance condition}  
(see \cite{Gathmann,Itenberg,Mikhalkin_W,Richter}) at
each point $p\in V$: this means that
\begin{equation}
w(p)_1+\cdots+w(p)_{s(p)}=0,
\end{equation}
where $s(p)\ge2$ and  $w(p)_1,\ldots,w(p)_{s(p)}$ are all the \emph{weighted primitive vectors} which are outward normal to the
different $s(p)$ building blocks of $V$ meeting at $p$.

\m

In  p. \pageref{dfn:lines}  we have seen that lines in $\T\P^2$ are \emph{not homeomorphic} to lines  in $\TP$. But, as tropical varieties, lines must
be all the same (i.e., they must be equivalent, in some way), regardless of the embedding dimension. The key concept to get such an equivalence relation is called  \textbf{modification}; see \cite{Mikhalkin_T} for details. 
We  will briefly explain modifications
only for lines.\label{dfn:modification}
    Suppose $L\subset \TP$ is a tropical line in which rays $r$ and $r'$ meet at vertex  $v\in L$ (and either an edge $e$ or two more rays meet also at $v$). The balance condition for $L$ holds at $v$. Roughly speaking, \textbf{a modification of $L$
at the point $v$} consists in contracting one ray (either $r$ or $r'$)
 down to the point  $v$ obtaining something, denoted $\overline{L}$, as a result. $\overline{L}$  must satisfy the balance condition, so at the same time of contracting one ray, a  straightening  of the direction of the second ray is necessary. The resulting object $\overline{L}$,
viewed in $\T\P^2$, satisfies the balance condition at every point, also at $v$. $\overline{L}$ is is a tropical line in $\T\P^2$, called  a modification of $L$ at $v$.
The inverse procedure, i.e., passing from $\overline{L}$  to $L$ is also called a modification;
see figure \ref{figure_01}, with $r=r_1$ and $r'=r_2$ or $r=r_2$ and $r'=r_1$.

\m
The \textbf{tropical distance} between two points  $p=[p_1,p_2,p_3,0]^t$ and $q=[q_1,q_2,q_3,0]^t$ is
\begin{equation}\label{dfn:trop_dist}
\tdist(p,q):=\max\{|p_i-q_i|\, :\, i=1,2,3\}.
\end{equation}
 Notice that if three coordinates in $p$ and $q$ coincide, then the tropical and Euclidean distances between $p$ and $q$ coincide.

\m
Let $n,m\in\N$ and $a_1,\ldots, a_n$ be  points in $\T\P^m$.
The \textbf{tropical span} of $a_1\ldots,a_n$ is
\begin{equation}\label{eqn:span}
\spann(a_1,\ldots,a_n):=\{(\lambda_1+ a_1)\oplus\cdots\oplus (\lambda_n+ a_n) \in \T\P^m: \lambda_1,\ldots,\lambda_n\in\R\},
\end{equation}
where maxima are computed coordinate--wise.
If $n=2,3$ or $4$, we speak of  \textbf{tropical segment}, \textbf{tropical triangle} or \textbf{tropical tetrahedron}.



 Assume $m=n-1$ and let us write the coordinates of the $a_j$ as the columns of a matrix $A$.
In order to view $\spann(A)$ inside $\R^{n-1}$, we must use the matrix $A_0=(\alpha_{ij})$, \label{dfn:alphas} where
\begin{equation}\label{eqn:alpha}
\alpha_{ij}=a_{ij}-a_{nj}.
\end{equation}
By (\ref{eqn:span}),   $x=[\sucen x{n-1},0]^t\in \spann(A)$ if and only if there exist $\sucen \lambda m\in\R$ such that
\begin{equation}
0=\max_{k\in[m]}\lambda_k,\quad
x_j=\max_{k\in[m]}\lambda_k+\alpha_{kj},\quad j\in[n-1].\label{eqn:max_otra}
\end{equation}

\m
\textbf{Kleene stars.} Consider $A\in \R^{n\times n}$.  By definition (see \cite{Butkovic_Libro,Sergeev,Butkovic_Sch_Ser,Sergeev_Sch_But}), $A$ is \emph{a Kleene star} if $A$ is null--diagonal and idempotent, tropically; in symbols: $\diag(A)=0$ and $A=A^2$. Notice that, if every diagonal entry of $A=(a_{ij})$ vanishes, then  $A\le A^2$, because   for each $i,j\in[n]$, we have $(A^2)_{ij}=\max_{k\in[n]} a_{ik}+a_{kj}\ge a_{ij}$. Therefore, being a Kleene star is characterized by the following  $n$  equalities and ${n\choose 2}+{n\choose 3}$
linear inequalities:
\begin{equation}\label{eqn:Kleene}
a_{ii}=0,\quad a_{ik}+a_{kj}\le a_{ij}, \quad i,j,k\in[n], \quad \card\{i,j,k\}\ge2.
\end{equation}
%

\section{Convex maximal tropical tetrahedra in $\TP$}\label{sec:tetra}

Assume $A$ is an order $4$ matrix.
The objective of this paper is to find all the possible combinatorial types of the tropical tetrahedron $\spann(A)$, when $\spann(A)$ is convex and maximal (maximality to be defined in p. \pageref{dfn:maximal}). By \cite{Puente_convex,Sergeev},  $\spann(A)$ is convex if and only if $A$ is a Kleene star, and then $\spann(A)$ is determined by the following 12 inequalities:
\begin{equation}\label{eqn:eqns}
 \begin{tabular}{c|c}
 $a_{14}\le X_1\le -a_{41}$&$a_{12}\le X_1-X_2\le -a_{21}$\\
 $a_{24}\le X_2\le -a_{42}$&$a_{23}\le X_2-X_3\le -a_{32}$\\
 $a_{34}\le X_3\le -a_{43}$&$a_{31}\le X_3-X_1\le -a_{13}$.
\end{tabular}
\end{equation}

From now on, we will assume that $A$ is an order 4 Kleene star. For $i,j\in[4]$, $i\neq j$, let $L_{ij}:=L(\col(A,i),\col (A,j))$.

\subsection{The type of the tropical line $L_{ij}$}

Here is an application of the balance condition in $\TP$ identified with $\R^3$. Let $e_1,e_2,e_3$ denote the canonical vectors, \label{dnf:canonical} $e_{ij}:=e_i+e_j$, for $i\neq j$ and  $e_{123}:=e_1+e_2+e_3$.
Suppose $L$ is a tropical line, not a tetrapod. Then the direction of the edge of $L$ is $e_{ij}$ if and only if the type of $L$ is $[ij,k4]$,  with $\{i,j,k\}=[3]$.

\begin{thm}\label{thm:types} Assume $A$ is an order $4$ Kleene star.
Let   $\{i,j,k,l\}=[4]$ with  $i<j$. Then
 the \textbf{type} of the tropical line $L_{ij}$ is either $[ik,jl]$ or $[il,jk]$ or else $L_{ij}$ is a tetrapod  (easy to remember: \textbf{$i$ and $j$ are separated by the comma}, unless  $L_{ij}$ has just one vertex).
\end{thm}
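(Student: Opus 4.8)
The plan is to translate the type classification for $L_{ij}=L(\col(A,i),\col(A,j))$ into a statement about the $2\times 2$ tropical minors $m_{kl}$ of the $4\times 2$ matrix formed by columns $i$ and $j$ of $A$, and then exploit the Kleene star inequalities \eqref{eqn:Kleene}. Recall from p.~\pageref{dfn:line_pq} that the type of $L(p,q)$ is read off from the three quantities $m_{12}+m_{34}$, $m_{13}+m_{24}$, $m_{14}+m_{23}$, the type being $[ab,cd]$ precisely when $m_{ab}+m_{cd}$ is the \emph{strictly smallest} of the three (and $[1234]$ when all three coincide). So the claim ``$i$ and $j$ are separated by the comma'' is equivalent to asserting that $m_{ij}+m_{kl}$ is \emph{never} the strict minimum among the three sums; that is, the type can only be $[ik,jl]$, $[il,jk]$, or the degenerate tetrapod.

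\textbf{Key steps.} First I would fix the explicit entries. With $p=\col(A,i)$ and $q=\col(A,j)$, the relevant minor is
\begin{equation}
m_{kl}=\left|\begin{array}{cc} a_{ki}&a_{kj}\\ a_{li}&a_{lj}\\ \end{array}\right|_{\trop}=\max\{a_{ki}+a_{lj},\ a_{li}+a_{kj}\}.
\end{equation}
The decisive simplification comes from the diagonal: since $A$ is null--diagonal, $a_{ii}=a_{jj}=0$, so the two minors $m_{ik}$-type expressions that involve rows $i$ or $j$ simplify. Concretely I would compute $m_{ij}$, which uses rows $i,j$ and columns $i,j$, so it contains the diagonal entries $a_{ii}=a_{jj}=0$:
\begin{equation}
m_{ij}=\max\{a_{ii}+a_{jj},\ a_{ji}+a_{ij}\}=\max\{0,\ a_{ij}+a_{ji}\}=0,
\end{equation}
the last equality because the Kleene inequality $a_{ij}+a_{ji}\le a_{ii}=0$ forces $a_{ij}+a_{ji}\le 0$. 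So $m_{ij}=0$ always. Next I would show $m_{ij}+m_{kl}=m_{kl}$ is never the strict minimum of the three sums, equivalently that $m_{kl}\ge\min\{m_{ik}+m_{jl},\ m_{il}+m_{jk}\}$, or more strongly that one of the other two sums is $\le m_{kl}$.

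\textbf{Carrying out the comparison.} Writing out $m_{kl}=\max\{a_{ki}+a_{lj},a_{li}+a_{kj}\}$, $m_{ik}=\max\{0\cdot?\dots\}$ — more carefully $m_{ik}$ uses rows $i,k$: $m_{ik}=\max\{a_{ii}+a_{kj},a_{ki}+a_{ij}\}=\max\{a_{kj},\ a_{ki}+a_{ij}\}$, and similarly $m_{jl}=\max\{a_{jj}+a_{li},\dots\}=\max\{a_{li},\ a_{lj}+a_{ji}\}$ (again using $a_{ii}=a_{jj}=0$). Then $m_{ik}+m_{jl}\ge a_{kj}+a_{li}$ and $\ge(a_{ki}+a_{ij})+(a_{lj}+a_{ji})$. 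The first lower bound $a_{kj}+a_{li}$ is exactly one of the two terms defining $m_{kl}$, hence $m_{ik}+m_{jl}\ge a_{li}+a_{kj}$; by the symmetric computation $m_{il}+m_{jk}\ge a_{ki}+a_{lj}$. Since $m_{kl}$ is the \emph{max} of these two terms, whichever term realizes $m_{kl}$ is dominated by the corresponding other sum, giving $\max\{m_{ik}+m_{jl},\ m_{il}+m_{jk}\}\ge m_{kl}=m_{ij}+m_{kl}$. This exactly rules out $[ij,kl]$ as the strict minimum, proving that $i$ and $j$ are separated by the comma (or the three sums tie, yielding a tetrapod).

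\textbf{Main obstacle.} The genuine difficulty is purely bookkeeping: the minor labels $m_{ab}$ are indexed by rows, while the columns are fixed to be $i,j$, so I must track carefully which diagonal zeros appear in which minor and make sure the inequality direction from \eqref{eqn:Kleene} is applied correctly (the Kleene star gives upper bounds $a_{ij}+a_{ji}\le 0$ and triangle-type bounds $a_{ik}+a_{kj}\le a_{ij}$). The cleanest route is to establish the single clean identity $m_{ij}=0$ first, then phrase the whole argument as ``$m_{ij}+m_{kl}$ cannot be the unique minimum,'' so that the remaining work is the two symmetric inequalities above rather than a case analysis over all type labels. I would also note that the tetrapod case arises exactly when equality holds throughout, which is consistent with the parenthetical ``unless $L_{ij}$ has just one vertex.''
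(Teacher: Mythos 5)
Your reduction of the theorem to ``$m_{ij}+m_{kl}$ is never the strict minimum of the three Pl\"{u}cker sums'' is the right move, and your minor computations are all correct: $m_{ij}=\max\{0,a_{ij}+a_{ji}\}=0$, $m_{ik}=\max\{a_{kj},\,a_{ki}+a_{ij}\}$, $m_{jl}=\max\{a_{li},\,a_{lj}+a_{ji}\}$, $m_{kl}=\max\{a_{ki}+a_{lj},\,a_{li}+a_{kj}\}$, and so on. The gap is the final logical step. You correctly state the goal as ``one of the other two sums is $\le m_{kl}$'', but what you actually prove is $\max\{m_{ik}+m_{jl},\,m_{il}+m_{jk}\}\ge m_{ij}+m_{kl}$, an inequality pointing the \emph{opposite} way. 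That inequality cannot rule out type $[ij,kl]$: if $m_{ij}+m_{kl}$ were the strict minimum, both other sums would exceed it and your inequality would hold automatically. It is a consequence of, not an obstruction to, the very case you want to exclude. The reason the gap is genuine and not cosmetic is that in the comparison step you only use the zero diagonal and $a_{ij}+a_{ji}\le 0$, and under those hypotheses alone the conclusion is false: take $\col(A,i)=[0,0,0,0]^t$ and $\col(A,j)=[0,0,-1,-1]^t$. Then $m_{ij}=m_{ik}=m_{jl}=m_{il}=m_{jk}=0$ and $m_{kl}=-1$, so $m_{ij}+m_{kl}=-1$ is the strict minimum and $L_{ij}$ has type $[ij,kl]$ (this is exactly lemma \ref{lem:dos_puntos} in the case $ab>0$). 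Your derived inequality $0\ge-1$ holds there, yet the forbidden type occurs; of course, such columns cannot sit inside a Kleene star, because $a_{ki}+a_{ij}\le a_{kj}$ fails.

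The repair is to actually apply the triangle inequalities from (\ref{eqn:Kleene}) --- which you mention in your ``main obstacle'' paragraph but never invoke where it matters --- to the second terms of your maxima: $a_{ki}+a_{ij}\le a_{kj}$, $a_{lj}+a_{ji}\le a_{li}$, $a_{li}+a_{ij}\le a_{lj}$ and $a_{kj}+a_{ji}\le a_{ki}$ upgrade your lower bounds to the \emph{equalities} $m_{ik}=a_{kj}$, $m_{jl}=a_{li}$, $m_{il}=a_{lj}$, $m_{jk}=a_{ki}$. Then $m_{ik}+m_{jl}=a_{kj}+a_{li}$ and $m_{il}+m_{jk}=a_{ki}+a_{lj}$ are precisely the two terms inside the maximum defining $m_{kl}$, whence
\begin{equation*}
m_{ij}+m_{kl}\;=\;m_{kl}\;=\;\max\{\,m_{ik}+m_{jl},\;m_{il}+m_{jk}\,\},
\end{equation*}
i.e., $m_{ij}+m_{kl}$ is the \emph{largest} of the three sums and therefore can never be the strict minimum; the type is then read off from which of the two terms of $m_{kl}$ wins (tetrapod when they tie). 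With this correction your argument becomes essentially the paper's proof, which (for $i=1$, $j=2$) computes $m_{12}+m_{34}=M=|A(34;12)|_{\trop}$, $m_{13}+m_{24}=a_{32}+a_{41}$, $m_{14}+m_{23}=a_{31}+a_{42}$, and decides the type according to which diagonal of $A(34;12)$ attains $M$.
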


\begin{proof}
Without lost of generality,  assume that $i=1$, $j=2$, so that $\{k,l\}=\{3,4\}$. Write  $p=\col(A,1)$ and $q=\col(A,2)$ and compute the tropical minors $m_{ij}$'s as in expression (\ref{not:mij}). Write $M=|A(34;12)|_{\trop}$. Then
$$m_{12}+m_{34}=M,\ m_{13}+m_{24}=a_{32}+a_{41},\ m_{14}+m_{23}=a_{31}+a_{42}.$$ The  value $M$ is attained at the main (resp. secondary) (resp. both) diagonal(s) if and only if  the type of line $L_{12}$ is $[13,24]$ (resp. $[14,23]$) (resp. $[1234]$). This proves the  statement.
\end{proof}


\subsection{Generators and extremals}

From now on, we assume that $A$ is an order 4 Kleene star such that \textbf{the columns of $A$ represent four tropically non--coplanar points in $\TP$}: this is our \textbf{hypothesis 1}. 

\label{rem:hypothesis}
In \cite{Joswig_K},  a tropical tetrahedron is defined as the tropical  tropical span of \textbf{four points which are not contained in the boundary of a tropical halfspace}. If four points are not tropically coplanar, then they are not contained in the boundary of a tropical halfspace, but the converse is not true.  Therefore, our hypothesis 1 is  more restrictive (and more natural, in our opinion) than the hypothesis in \cite{Joswig_K}. In particular, if  (within a smaller set of matrices) we find six combinatorial classes (see below, p. \pageref{class:first}), then there should be at least six classes in \cite{Joswig_K}, but only five classes are shown there.

\m
Our aim  is to study $\spann(A)$ as a convex body in 3--dimensional space. It  turns out that $\spann(A)$ is
not regular, i.e., its facets are irregular polygons. However, the facets of $\spann(A)$ are nice enough, because they are contained in classical planes of equations
$X_i=\cnst$, $X_j-X_k=\cnst$, $i,j,k\in[3]$.
In other words, the edges of $\spann(A)$ have  directions   $e_i$, $e_{jk}$, or $e_{123}$.  Such polyhedra are called  \textbf{alcoved polyhedra};
see \cite{Joswig_K,Lam_Postnikov,Lam_PostnikovII, Werner_Yu}.

\m
We work in $\TP$, identified with $\R^3$, using the matrix $A_0=(\alpha_{ij})$ defined in (\ref{dfn:alphas}).
The points represented by the columns of $A_0$ will be called \textbf{generators}. Taking generators three by three,
they yield four additional points (the four vertices of the four corresponding tropical planes)
and taking generators two by two, they yield, at most,  twelve more points (the vertices of  six  tropical lines). These new points will be called \textbf{extremal generated points}. Generators and extremal generated points  are called \textbf{extremals} of $\spann(A)$.
In the generic  case, we get a total of $20=4+4+12$ \emph{different extremals}  and then we say that
$A$ is \textbf{maximal in extremals}. Notice that $20={{2n-2}\choose{n-1}}$, for $n=4$, which agrees with \cite{Develin_Sturm,Gelfand_G,Joswig_K}.  
The number and computation of extremals have been  studied in \cite{Allamigeon_GG,Allamigeon_GK}, in a more general setting (where classical convexity is not assumed).

\m

The following \textbf{color code for figures} will be used: blue for generators,  magenta for vertices of tropical planes and  yellow for vertices of tropical lines. Two adjacent red segments should be glued together, after cutting and folding. Dashed segments must be mountain--folded, dotted  segments must be valley--folded.

\subsection{Some tropical triangles in $\TP$, their matrices and  vertex  configurations}

Now, we must sidetrack to  discuss what
some tropical triangles in $\TP$ are like.\label{par:trian} The columns of $A_0$, taken three by three, determine four tropical triangles in $\TP$. Tropical triangles in $\TP$ can be easily understood.
Planar tropical triangles  have been studied in \cite{Ansola_tri,Develin_Sturm,Joswig_K,Puente_lin}. In general, tropical triangles are compact but not convex.

Let $B$ be a $4\times 3$ real  matrix obtained by deleting one column in $A$. We want to describe the tropical triangle $\spann(B)$. The points represented by the columns of $B_0$  will be called the \textbf{generators} of $\spann(B)$.
They determine one tropical plane $\Pi^B$ and three tropical   lines: $L_{12}$, $L_{23}$ and $L_{31}$, where $L_{ij}$ denotes $L(\col(B,i),\col (B,j))$.   Thus, \textbf{additional
extremals}  arise: the vertex of the plane $\Pi^B$  and the vertices of the  lines.  In the generic  case,
we get a total of $10=3+1+2\times 3$ different extremals in $\spann(B)$ and we say that $B$ is \textbf{maximal in extremals}.

The \textbf{f--vector} of $\spann(B)$ is $(v,e,f)$, where $v$ (resp. $e$) (resp. $f$) is the number of extremals (resp. edges)
(facets) of $\spann(B)$. The \emph{Euler characteristic} of $\spann(B)$ is $v-e+f=1$.  The \textbf{polygon--vector} of $\spann(B)$
is $(f_3,f_4,f_5,f_6)$, where $f_m\ge0$ is the number of convex $m$--gons occurring as facets and  $f=f_3+f_4+f_5+f_6$. \label{dfn:polygonvector}

\m
We will only consider the maximal case, i.e., $v=10$. 
Inside $\Pi^B$, the generators of $\spann(B)$ can sit as follows:
\begin{itemize}
\item all three in one quadrant (therefore, the  triangle $\spann(B)$ is planar), or \label{it:in1}
\item two in one quadrant, the third one in another quadrant, or\label{it:in2}
\item each generator in a different quadrant (this is the generic case).\label{it:in3}
\end{itemize}

\begin{figure}[h]
 \centering
  \includegraphics[keepaspectratio,width=14cm]{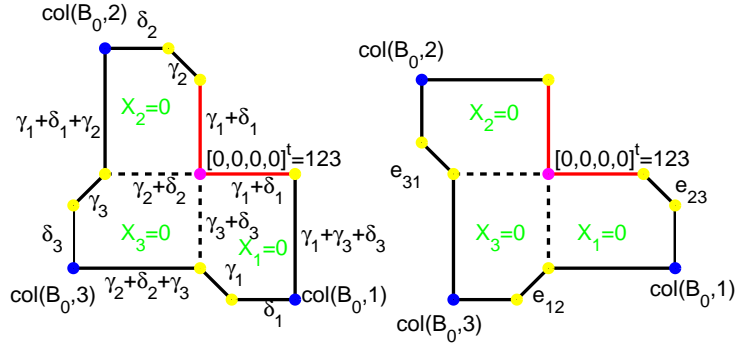}\\
  \caption{Cut--and--fold models for a tropical triangle $\spann(B)$ made up of three pentagons. Point $v^{\Pi^B}$ is of type (5.5.5): left and right models. In the figure, the coordinates of $v^{\Pi^B}$ are $[0,0,0,0]^t$.}
  \label{figure_04}
\end{figure}

We need only study the generic case (i.e., $f=3$), since the second case (i.e., $f=2$) is a degeneration of it,  and the first case (i.e., $f=1$) has already been studied in \cite{Ansola_tri,Develin_Sturm,Joswig_K,Puente_lin}.
In the generic case,  $\spann(B)$ is the union of  three classical convex $m$--gons, one $m$--gon contained in  each quadrant,
 with $m=3,4,5,6$.  
 How are the ten extremals of $\spann(B)$ distributed among the quadrants
 of $\Pi^B$? By genericity,  each  generator lies on a different  quadrant and  only three quadrants are involved. Also, the vertex  of $\Pi^B$ is  common to
  all three quadrants. And,
 for each two quadrants (out of three), one additional extremal point lies on their intersection. This leaves out three extremals.  How are these three   distributed among the three quadrants involved? They cannot  lie all on just one  quadrant, because this would yield more than six extremal
 points (generators or additional) on one quadrant, but our polygons have six vertices, at most.
So, the three remanent extremals can be arranged as follows:
 \begin{itemize}
 \item  one in each quadrant, or \label{it:uno_en_cada}
 \item  two in one quadrant and one in another quadrant. \label{it:en_dos}
 \end{itemize}

\begin{figure}[h]
 \centering
  \includegraphics[keepaspectratio,width=14cm]{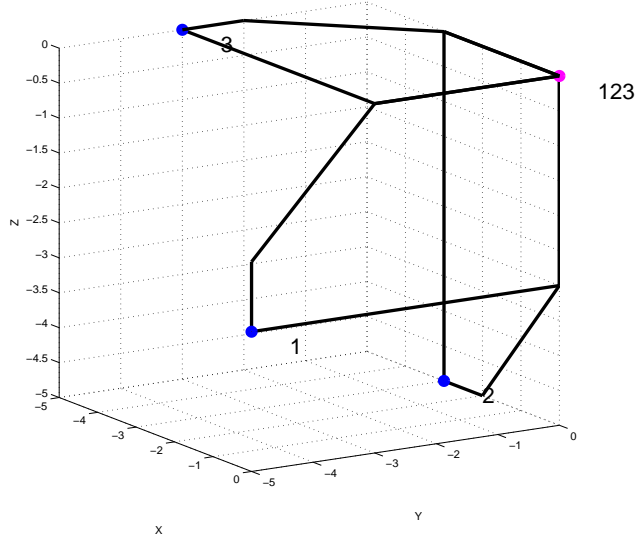}\\
  \caption{3--dimensional  model for matrix (\ref{eqn:3penta_left})  with $\gamma_j=2$, $\delta_j=1$, $j=1,2,3$ left.}
  \label{figure_05}
\end{figure}

\begin{figure}[h]
 \centering
  \includegraphics[keepaspectratio,width=14cm]{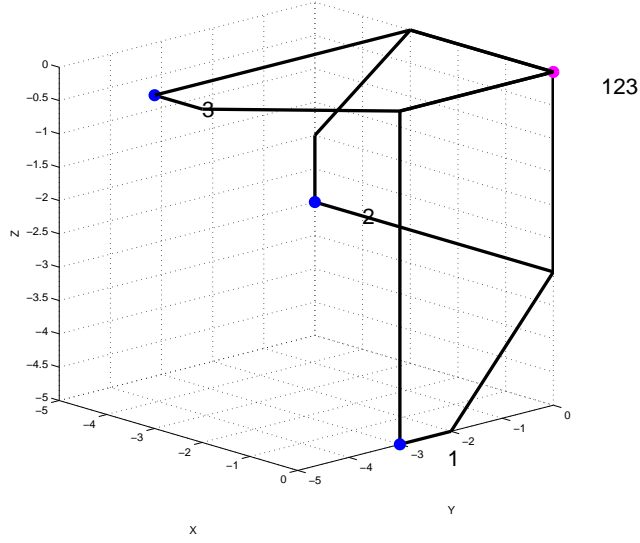}\\
  \caption{3--dimensional  model for matrix (\ref{eqn:3penta_right})  with $\gamma_j=2$, $\delta_j=1$,  $j=1,2,3$ right.}
  \label{figure_06}
\end{figure}

 In the first case, the tropical triangle $\spann(B)$ is the union of \textbf{three pentagons}, i.e., the polygon--vector of $\spann(B)$ is $(0,0,3,0)$. This situation is also expressed
 by saying that the \textbf{configuration at the point} $v^{\Pi^B}$  is  $(5.5.5)$. There is a left version and a right version of configuration $(5.5.5)$; see  cut--and--fold  models in figure
 \ref{figure_04}, and the corresponding folded models in figures  \ref{figure_05} and \ref{figure_06}.

 In this case, \textbf{what is $B_0$ like?} Let us  assume that $v^{\Pi^B}=[0,0,0,0]^t$ and the deleted column in $A$ is the fourth one. Then

 \begin{equation}\label{eqn:3penta_left}
 B_0=\left[
 \begin{array}{rrr}
 0&-\gamma_2-\delta_2&-\gamma_3-\gamma_2-\delta_2\\
 -\gamma_1-\gamma_3-\delta_3&0&-\gamma_3-\delta_3\\
 -\gamma_1-\delta_1&-\gamma_2-\gamma_1-\delta_1&0\\
 0&0&0
 \end{array}
 \right] \text{left\ model},
 \end{equation}
and
\begin{equation}\label{eqn:3penta_right}
 B_0=\left[
 \begin{array}{rrr}
 0&-\gamma_2-\gamma_3-\delta_3&-\gamma_3-\delta_3\\
 -\gamma_1-\delta_1&0&-\gamma_3-\gamma_1-\delta_1\\
 -\gamma_1-\gamma_2-\delta_2&-\gamma_2-\delta_2&0\\
 0&0&0
 \end{array}
 \right]\text{right\ model},
 \end{equation}
 for some parameters $\suce \gamma,\suce \delta>0$. 

\begin{figure}[h]
 \centering
  \includegraphics[keepaspectratio,width=14cm]{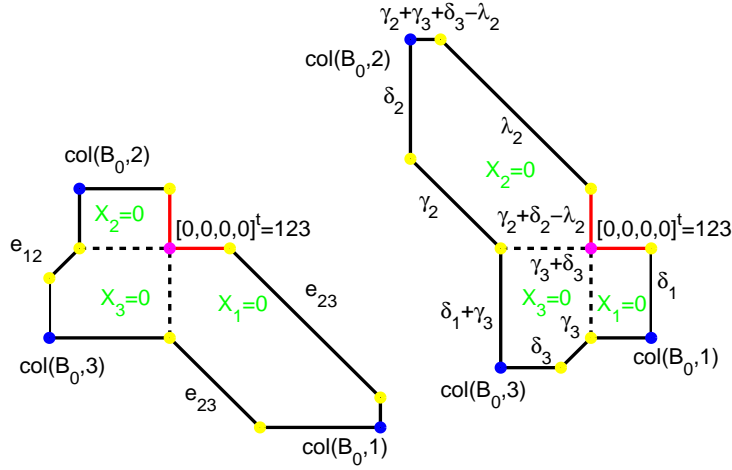}\\
  \caption{Cut--and--fold models for a tropical triangle $\spann(B)$ made up of a quadrangle, a pentagon and a hexagon. The configuration of  $v^{\Pi^B}$ is   $(6.4.5)$ for the left figure, and $(4.6.5)$ for the right figure.}
  \label{figure_07}
\end{figure}

\begin{figure}[h]
 \centering
  \includegraphics[keepaspectratio,width=14cm]{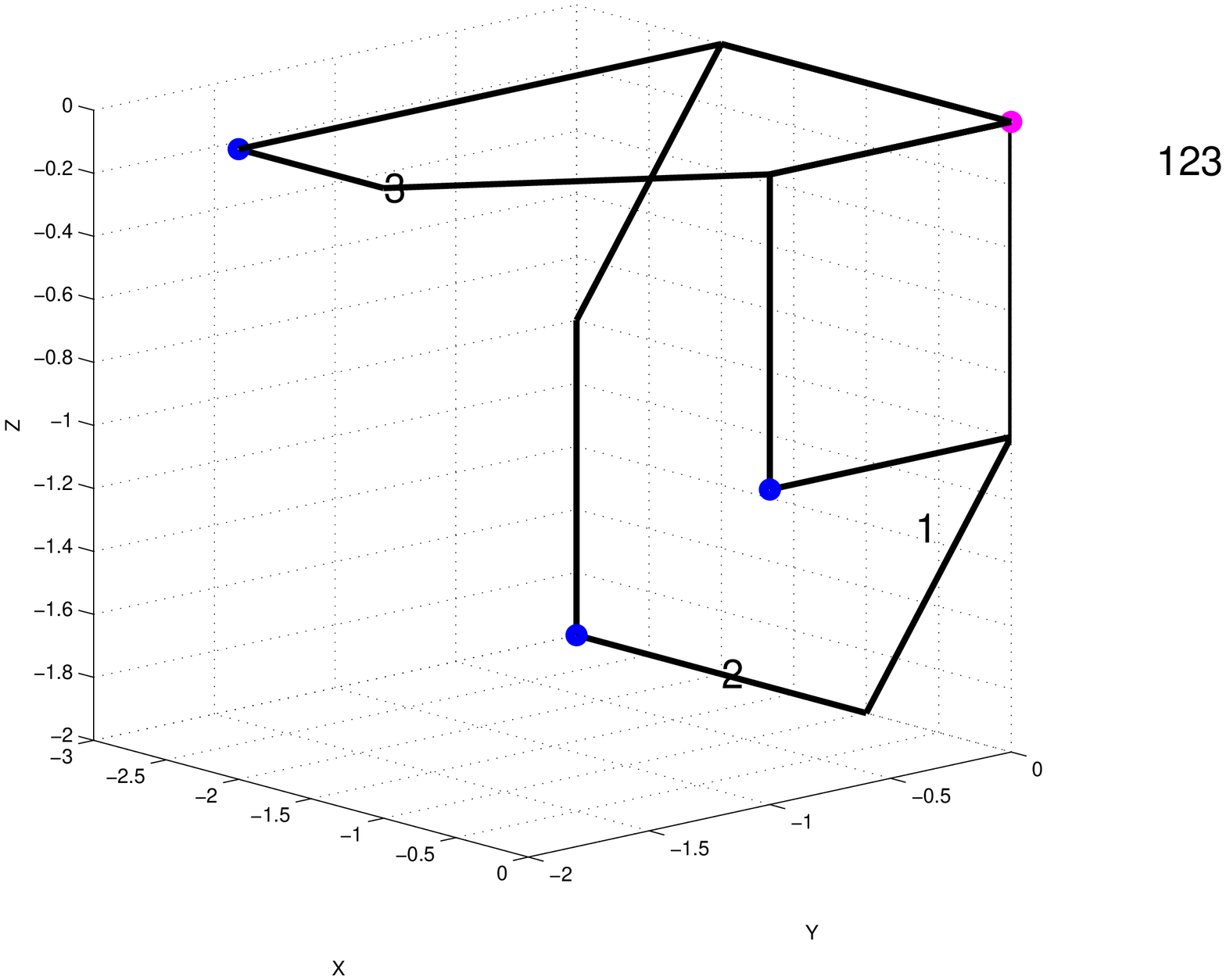}\\
  \caption{3--dimensional  model for matrix (\ref{eqn:456_right})  with $\delta_1=\gamma_2=\delta_2=\lambda_2=\gamma_3=\delta_3=1$.
  The configuration is $(4.6.5)$.}
  \label{figure_08}
\end{figure}

\m
In the second case, the tropical triangle $\spann(B)$ is the union of \textbf{a quadrangle, a pentagon and a hexagon}, i.e.,
 the  polygon--vector of $\spann(B)$ is $(0,1,1,1)$.  We say that the \textbf{configuration at the point} $v^{\Pi^B}$ is $(4.5.6)$ if the quadrangle is contained in the plane $X_1=\cnst$, the pentagon  in  $X_2=\cnst$, and the hexagon  in  $X_3=\cnst$. Similarly, we have configurations   $(p.q.r)$ for any $\{p,q,r\}=\{4,5,6\}$.    See figures \ref{figure_07}
 and \ref{figure_08} for a picture of some of these configurations.

\textbf{What is $B_0$ like?} Let us  assume $v^{\Pi^B}=[0,0,0,0]^t$ and the deleted column in $A$ is the last one. After a change of variables  we have
\begin{equation}\label{eqn:456_right}
 B_0=\left[
 \begin{array}{rrr}
 0&-\gamma_2-\gamma_3-\delta_3&-\gamma_3-\delta_3\\
 -\delta_1&0&-\delta_1-\gamma_3\\
 -\gamma_2-\delta_2+\lambda_2&-\gamma_2-\delta_2&0\\
 0&0&0
 \end{array}
 \right],\text{configuration $(4.6.5)$},
 \end{equation}
 for some parameters $\delta_1, \gamma_2,\delta_2,\lambda_2,\gamma_3,\delta_3>0$, with $\gamma_2+\delta_2-\lambda_2>0$, and
\begin{equation}\label{eqn:456_left}
B_0=\left[
 \begin{array}{rrr}
 0&-\delta_2&-\delta_2-\gamma_3\\
 -\gamma_1-\gamma_3-\delta_3&0&-\gamma_3-\delta_3\\
 -\gamma_1-\delta_1&-\gamma_1-\delta_1+\lambda_1&0\\
 0&0&0\\
\end{array}
 \right], \text{configuration $(6.4.5)$},
 \end{equation}
for some parameters $\gamma_1,\delta_1,\lambda_1,\delta_2,\gamma_3,\delta_3>0$, with $\gamma_1+\delta_1-\lambda_1>0$.
We get similar expressions for $B_0$, for other configurations $(p.q.r)$.

In summary,  if $B$ is generic and maximal in extremals (i.e., $f=3$  and $v=10$), then the possible configurations at the vertex of  $\spann(B)$ are: \begin{enumerate}\label{par:summary}
\item $(5.5.5)$ left,
\item $(5.5.5)$ right, \nopagebreak
\item $(p.q.r)$, with $\{p,q,r\}=\{4,5,6\}$.
\end{enumerate}

%
%
%
%
%
%
%
%
\m
 In the previous examples, the generators of $\spann(B)$ lie on  $Q_1$, $Q_2$ and $Q_3$, introduced
 in p. \pageref{not:quadrants}. These quadrants are orthogonal to each other, and so we say that the  \textbf{angle--vector}   at the vertex of  $\spann(B)$ is $\langle 90,90,90\rangle$, in degrees.
But if some generator lies on quadrant $Q_{ij}$, other angle--vectors will occur.
Indeed, the intersection of two quadrants can be parallel to  vector
 $e_j$, for $j=1,2,3$, or to  the vector $e_{123}$.   Thus, by elementary geometry, the  following angles occur:
 \begin{equation*}
 \overline{\alpha}:=\angle(e_j,e_{123})=\arccos(\sqrt{1/3})\simeq 54^\circ 44'
\end{equation*} and its supplementary
\begin{equation}\label{dfn:alpha}
 \alpha\simeq125^\circ  16'.
 \end{equation}
 So, the angle--vector $\langle90,\alpha,\alpha\rangle$ is also possible at the vertex  of a tropical triangle $\spann(B)$.


\m

 \label{dfn:angle--vector} Above, we have  considered the \textbf{angle--vector at the vertex of a tropical triangle}. We can also speak of  the \textbf{angle--vector of a polygon}. All the polygons that we will encounter below are planar tropical triangles.
 It is well--known that some tropical planar triangles are classical hexagons; see \cite{Ansola_tri}. The edges of these hexagons have  directions $e_1$, $e_2$
 and $e_{12}$,  the angles occurring there being
 \begin{equation}
 \angle(e_j,e_{12})=\arccos(\sqrt{1/2})=45^\circ ,\quad 180-45=135^\circ ,\quad 90^\circ ,
 \end{equation}
 whence
 $\langle 90,135,135,90,135,135\rangle$ is the angle--vector of such a hexagon; see figure \ref{figure_09}, upper left.  Pentagons, quadrangles
 or triangles in the same figure are obtained from the given  hexagon, when one, two or three edges have collapsed. The corresponding angle--vectors are thus
 easily deduced.
 Later on, we will also encounter 
 the following angles 
 \begin{equation*}\label{dfn:beta}
\overline{\beta}:=\angle(e_{jk},e_{123})=\arccos(\sqrt{2/3})\simeq 35^\circ 16'
\end{equation*} and its supplementary
\begin{equation}
 \beta\simeq 144^\circ  44'.
 \end{equation}\label{dfn:beta}

 \begin{figure}[h]
 \centering
  \includegraphics[keepaspectratio,width=14cm]{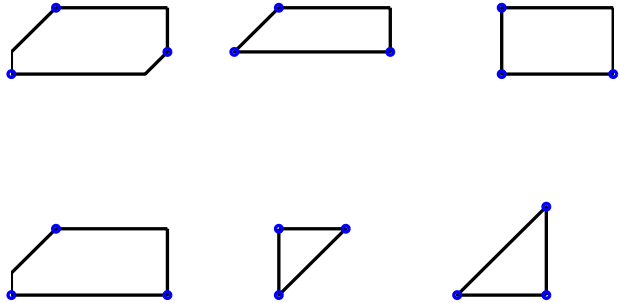}\\
  \caption{The planar case: some tropical triangles in $\T\P^2$ defined by tropically  idempotent   $3\times 3$ matrices. A hexagon is found in the upper left corner.
  The rest of the figures are obtained by letting some edges in such a hexagon collapse. Generators are marked in blue.}
  \label{figure_09}
\end{figure}

%
\subsection{Maximality and near--miss Johnson solids}

Let us return to discuss convex tropical tetrahedra. Since each tropical triangle in $\TP$ is made
up of, at most, 3  $m$--gons, for $m=3,4,5,6$, then  $\spann(A)$ may have up to
$4\times 3=12$ facets (this agrees with proposition 5 in \cite{Joswig_K}). If this is the case, we will say that  $A$ is \textbf{maximal in facets}. If $A$ is maximal in extremals
 (i.e., there are  20 such), then $A$ is maximal in facets, because each facet contains, at most, six extremals. The converse is not true.
 In the sequel, we will just say that $A$ is \textbf{maximal}, meaning maximal in extremals
  and facets. \label{dfn:maximal}
 By \cite{Joswig_K}, if $A$ is maximal, then $\spann(A)$  is  \textbf{simple} or \textbf{trivalent},
i.e., three facets are concurrent at each extremal point.

\m
By Euler's formula, with 20 vertices and 12 facets, $\spann(A)$ must have 30 edges. But  $(20,30,12)$
 is precisely the f--vector  of
a \emph{regular dodecahedron} $\cal D$ (the polygon--vector of $\cal D$ is, obviously, $(0,0,12,0)$). $\cal D$ is one of the famous  Platonic solids (or regular solids).  A \emph{Johnson solid} is a (less famous)  convex polyhedron, each facet of which is a regular polygon (like Platonic solids) but it is not uniform, i.e., it is not vertex--transitive (unlike Platonic solids). Since 1969, it has been known that there are exactly 92 classes of Johnson solids.   A convex polyhedron, each facet of which is near--regular is called a \textbf{near--miss Johnson solid.} This is a wide generalization of Johnson solids.
Let us visualize one near--miss Johnson solid. Take a regular dodecahedron $\cal D$ and choose four equidistant vertices in $\cal D$. Now allow each  chosen vertex to migrate to a neighboring facet. We obtain a new solid $\cal D'$, having  the same f--vector and  polygon--vector  $(0,4,4,4)$. $\cal D'$ is a near--miss Johnson solid. In lemma \ref{lem:circulant} we have two examples having the same combinatorial type  as  $\cal D'$. 
By tropicality and maximality, we will only deal with convex solids having $(20,30,12)$ as f--vector  and polygon--vector  $(0,f_4,f_5,f_6)$, with $12=f_4+f_5+f_6$.

\subsection{Oddly generated extremals}

Fix an order 4 Kleene star $A$. We must first name, then compute the extremals in $\spann(A)$.
Here are some notations:
\begin{itemize}
 \item Extremals of $\spann(A)$ will be underlined. In particular,
\underline{1}, \underline{2}, \underline{3}, \underline{4} are the generators. It is just an abbreviation for $\col(A_0,j)$, $j\in[4]$.

 \item  The vertex  of the tropical plane $L(\underline{i},\underline{j},\underline{k})$  is denoted $\underline{ijk}$. Here the  order of appearance if $i,j,k$ is irrelevant.

\item Let $\{i,j,k,l\}=[4]$.We say that the point \underline{$l$} is
    \textbf{opposite} to the point $\underline{ijk}$ inside $\spann(A)$. \label{dfn:opposite} 

\item We say that the point $\underline{ijk}$ is 3--generated. The extremals $\underline{i}$ and $\underline{ijk}$ are \textbf{generated by an odd number of points}. 
\end{itemize}

\begin{lem}\label{lem:transpose}
If $A$ is an order 4 Kleene star, then the columns of $-A^t$ represent the points \underline{234}, \underline{134}, \underline{124}, \underline{123}.
\end{lem}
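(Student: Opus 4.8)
The plan is to verify directly, without invoking the tropical Cramer's rule, that the prescribed columns are the claimed vertices. Writing $A = (a_{ij})$, the $l$--th column of $-A^t$ is the point $w^{(l)} := (-a_{l1}, -a_{l2}, -a_{l3}, -a_{l4})^t = -\row_l(A)$. Since a tropical plane $\Pi_a$ has vertex $[-a_1, -a_2, -a_3, -a_4]^t$, the point $w^{(l)}$ is precisely the vertex of the tropical plane $\Pi_a$ with coefficient vector $a = \row_l(A) = (a_{l1}, a_{l2}, a_{l3}, a_{l4})$. So it suffices to show that this particular plane passes through the three generators $\underline{i}$ with $i \in [4]\setminus\{l\}$; then, by uniqueness of the tropical plane through three tropically non--collinear points, it must coincide with $L(\underline{i},\underline{j},\underline{k})$, and $w^{(l)}$ must be its vertex $\underline{ijk}$.

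The single computation that does all the work is the following. A point $X$ lies on $\Pi_a$ (with $a = \row_l(A)$) if and only if $\max_k (a_{lk} + X_k)$ is attained at least twice. Taking $X = \underline{i} = \col(A,i)$, this maximum equals $\max_k (a_{lk} + a_{ki}) = (A^2)_{li}$, which in turn equals $a_{li}$ because $A = A^2$ (idempotency of the Kleene star). I would then observe that this common value $a_{li}$ is attained at two distinct indices: at $k = l$, where $a_{ll} + a_{li} = a_{li}$ (since $a_{ll} = 0$), and at $k = i$, where $a_{li} + a_{ii} = a_{li}$ (since $a_{ii} = 0$). As $i \neq l$, these indices differ, so the maximum is attained twice and $\underline{i} \in \Pi_a$. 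This is the crux of the argument: the null--diagonal and idempotency axioms defining a Kleene star are exactly what force the double attainment.

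Running this for each $i \in [4]\setminus\{l\}$ shows that all three generators lie on $\Pi_{\row_l(A)}$. By the standing non--coplanarity hypothesis the three generators are non--collinear, so $L(\underline{i},\underline{j},\underline{k})$ is the unique plane through them and therefore equals $\Pi_{\row_l(A)}$; its vertex is thus $\underline{ijk}$ with $\{i,j,k\} = [4]\setminus\{l\}$. Specialising $l = 1,2,3,4$ reads off that the columns of $-A^t$ represent $\underline{234}, \underline{134}, \underline{124}, \underline{123}$ respectively, as claimed.

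I do not expect a serious obstacle here. The only points requiring care are the bookkeeping --- keeping straight that deleting generator $l$ (equivalently, using row $l$ of $A$) produces the vertex labelled by the complementary triple $[4]\setminus\{l\}$ --- and the appeal to uniqueness of $L(\underline{i},\underline{j},\underline{k})$, which needs the three generators to be non--collinear. If one preferred a derivation matching the tropical Cramer's rule, the alternative is to evaluate the four $3\times 3$ tropical minors of the $4\times 3$ generator matrix and show that the $k$--th equals $a_{lk}$; there the work shifts to the Kleene path inequalities $a_{i i_1} + a_{i_1 i_2} + \cdots + a_{i_{m-1} j} \le a_{ij}$ (again a consequence of $A = A^2$), used to dominate every off--diagonal permutation term by the natural one. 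The direct argument above sidesteps that case analysis entirely.
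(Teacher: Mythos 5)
Your proof is correct, but it takes a genuinely different route from the paper. The paper proves the lemma in one stroke via the tropical Cramer's rule: the vertices $\underline{ijk}$ are the columns of $-\widehat{A}^t$, where $\widehat{A}$ is the tropical adjoint (the matrix of $3\times 3$ tropical minors), and then it observes that $\widehat{A}=A$ for a Kleene star --- which is exactly the minor computation you relegate to your closing remark as the ``alternative'' derivation. Your main argument instead avoids Cramer's rule entirely: you identify $\col(-A^t,l)$ as the vertex of the tropical plane $\Pi_{\row_l(A)}$, verify membership of each generator $\underline{i}$, $i\neq l$, by the clean computation $\max_k\bigl(a_{lk}+a_{ki}\bigr)=(A^2)_{li}=a_{li}$, attained at $k=l$ and $k=i$ thanks to the zero diagonal, and then invoke uniqueness of the tropical plane through three tropically non--collinear points. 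This is more elementary and self--contained, and it isolates precisely which Kleene axioms do the work (idempotency caps the maximum, the null diagonal forces double attainment). What the paper's route buys in exchange is independence from the uniqueness step: Cramer's rule produces the (stable) plane even in degenerate configurations, so the paper's proof works verbatim for any Kleene star, whereas your uniqueness appeal needs each triple of generators to be tropically non--collinear. That is harmless here --- it follows from hypothesis 1 (expanding the $4\times 4$ tropical determinant along the omitted column shows that a collinear triple would make $A$ tropically singular), and is in any case implicitly required for the notation $\underline{ijk}$ to be well defined --- but it is the one point where your argument silently uses more than the lemma's stated hypothesis.
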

\begin{proof}
The  coordinates of the points $\underline{ijk}$  are given by the tropical Cramer's rule; see \cite{Richter,Tabera_Pap}. This means that  the points $\underline{ijk}$  are given by the columns of the matrix $-\widehat{A}^t$. Here  $\widehat{A}=(b_{ij})$ is the \emph{tropical adjoint of $A$}, where $b_{ij}$ equals  the tropical determinant of the  minor obtained by omitting the $j$--th row and $i$--column in $A$. An easy computation shows that $\widehat{A}=A$, for a Kleene star $A$.
\end{proof}

Let $s:\R^{n-1}\rightarrow\R^{n-1}$ be the antipodal map, defined by $s(a)=-a$.
The following lemma is easy to prove.

\begin{lem}\ref{lem:transpose}
Let $A\in \R^{n\times n}$. The matrix $A$ is a Kleene star if and only if  $A^t$ is. In such a case, $\spann(A^t)=s(\spann(A))$. In particular, if $A$ is a Kleene star, then $A$ is symmetric if and only if $\spann(A)$ is symmetric with respect to the origin.\qed
\end{lem}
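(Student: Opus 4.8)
The plan is to reduce every assertion to a single inequality description of the span. First I would settle that $A$ is a Kleene star if and only if $A^t$ is, by inspecting the defining conditions (\ref{eqn:Kleene}). The null--diagonal requirement $a_{ii}=0$ is obviously invariant under transposition, and the inequalities $a_{ik}+a_{kj}\le a_{ij}$ become, for $A^t$, the inequalities $a_{ki}+a_{jk}\le a_{ji}$, which are exactly the original ones after the relabeling $(i,j,k)\mapsto(j,i,k)$. Equivalently one checks that $(A^t)^2=(A^2)^t$ holds tropically, so tropical idempotency is preserved; together with the diagonal condition this gives the first claim.

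The heart of the matter is the identity $\spann(A^t)=s(\spann(A))$, and for this I would first record the inequality description of the span of a Kleene star. Since $A=A^2$, the tropical map $x\mapsto A\odot x$ is a projector, so $\spann(A)=\{x: A\odot x=x\}$; and because $\diag(A)=0$ forces $(A\odot x)_i\ge a_{ii}+x_i=x_i$ coordinate--wise, the fixed--point condition collapses to
\[
\spann(A)=\{x:\ a_{ij}\le x_i-x_j\ \text{for all }i,j\in[n]\}.
\]
(For $n=4$ this is precisely the list (\ref{eqn:eqns}).) Now a point $y$ lies in $s(\spann(A))$ if and only if $-y\in\spann(A)$, that is, if and only if $a_{ij}\le -y_i+y_j=y_j-y_i$ for all $i,j$; renaming $i\leftrightarrow j$ this reads $a_{ji}\le y_i-y_j$, i.e.\ $(A^t)_{ij}\le y_i-y_j$ for all $i,j$, which is exactly the condition $y\in\spann(A^t)$. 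Hence $s(\spann(A))=\spann(A^t)$.

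For the final assertion, if $A=A^t$ then the displayed identity immediately gives $\spann(A)=s(\spann(A))$, so $\spann(A)$ is symmetric with respect to the origin. The converse is the only step that needs care: from $\spann(A)=s(\spann(A))=\spann(A^t)$ I must deduce $A=A^t$, so I need that a Kleene star is recovered from its span. I would prove the recovery formula $a_{ij}=\min\{x_i-x_j:\ x\in\spann(A)\}$: the bound $a_{ij}\le x_i-x_j$ holds throughout the span by the description above, and the minimum is attained at the generator $x=\col(A,j)$, which lies in $\spann(A)$ because $A\odot\col(A,j)=\col(A^2,j)=\col(A,j)$ and which satisfies $x_i-x_j=a_{ij}-a_{jj}=a_{ij}$. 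Thus the entries of $A$ are determined by $\spann(A)$, forcing $A=A^t$ once the two spans coincide. The main obstacle is exactly this injectivity of $A\mapsto\spann(A)$ on Kleene stars; the remaining steps are routine manipulations of the inequality system.
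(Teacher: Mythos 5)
Your proof is correct and complete. There is nothing in the paper to compare it against: the lemma is prefaced by ``The following lemma is easy to prove'' and its statement ends with a tombstone, so the proof is omitted entirely; your argument fills that gap along a natural route. Its two substantive ingredients are (i) the description $\spann(A)=\{x:\ a_{ij}\le x_i-x_j\ \text{for all }i,j\in[n]\}$ of the span of a Kleene star, which you derive from idempotency and the zero diagonal via the projector $x\mapsto A\odot x$ (the paper only quotes the $n=4$ instance, the system (\ref{eqn:eqns}), from \cite{Puente_convex,Sergeev}); once this is in hand, negation reverses all the difference inequalities and literally transposes the matrix, giving $s(\spann(A))=\spann(A^t)$; and (ii) the recovery formula $a_{ij}=\min\{x_i-x_j:\ x\in\spann(A)\}$, attained at the generator $\col(A,j)$, which gives injectivity of $A\mapsto\spann(A)$ on Kleene stars. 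Item (ii) deserves emphasis: it is precisely what the final claim needs in the direction ``$\spann(A)$ symmetric about the origin implies $A=A^t$'', and it is the step most easily glossed over when the lemma is declared easy; your attainment argument is sound because $\col(A,j)$ is a fixed point of $x\mapsto A\odot x$ and hence lies in the span, with $x_i-x_j=a_{ij}-a_{jj}=a_{ij}$ there. Your projective bookkeeping is also in order, since every condition you use involves only differences $x_i-x_j$, which are well defined on classes in $\T\P^{n-1}$ and compatible with the normalization $X_n=0$ under which the antipodal map $s$ is defined.
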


Set $n=4$.\label{rem:antipodal}
The map $s$ takes the extremal $\underline{i}$ in $\spann(A)$ onto $\underline{jkl}$ in $\spann(A^t)$.
Thus,  if $A$ is a maximal Kleene star,  the possible configurations at any oddly generated extremal point are summarized in  p. \pageref{par:summary}. For instance,
the configuration at  point \underline{4} is $(5.5.5)$  if three pentagons meet at \underline{4}. There are two possibilities, left and right, which are shown (unfolded) in figure \ref{figure_10}. At point \underline{4} we can also have configuration $(p.q.r)$, with $\{p,q,r\}=\{4,5,6\}$.

\begin{figure}[h]
 \centering
  \includegraphics[keepaspectratio,width=14cm]{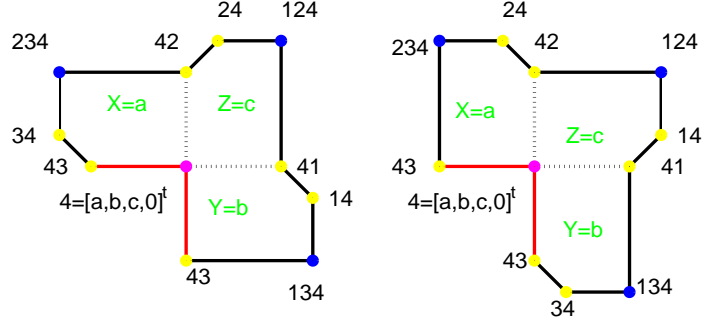}\\
  \caption{The configuration at point \underline{4} is $(5.5.5)$, left or right. Here, the coordinates of point \underline{4} are $[a,b,c,0]^t$.}
  \label{figure_10}
\end{figure}

\subsection{2--generated extremals, adiff and tropical distance}


%

Given two points $p\neq q$ in $\TP$, denote by $v,w$ the vertices of the tropical line $L(p,q)$.   
Pursuing maximality, we want to know \emph{when  $p,q,v,w$, are all different,} i.e., $\card\{p,q,v,w\}=4$.  

\begin{lem}\label{lem:dos_puntos}
Suppose $p,q$ are different points in $\TP$ and $v,w$  are the vertices (perhaps, $v=w$) of the tropical line $L(p,q)$. Then
$\card\{p,q,v,w\}\le3$ if and only if  then there exists a tropical change of variables that gives
$p=[0,0,a,b]^t$ and $q=[0,0,0,0]^t$,   for some  real numbers  $a,b$.
\end{lem}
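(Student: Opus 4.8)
The plan is to reduce to a normal form by a tropical change of variables and then read off the vertices of $L(p,q)$ explicitly. First I would record that the tropical changes of variables available here — compositions of coordinate permutations with tropical multiplication by a diagonal matrix $\diag(c)$ (i.e.\ adding a constant vector to all coordinates) — are bijections of $\TP$ that send $L(p,q)$ to $L(\diag(c)p,\diag(c)q)$, carry vertices to vertices (types being merely relabelled by the permutation), and therefore preserve both $\card\{p,q,v,w\}$ and the normal form claimed on the right. Choosing $c_i=-q_i$ moves $q$ to $[0,0,0,0]^t$ and replaces $p$ by the coordinatewise difference $p-q$; a further permutation lets me assume $p_1\ge p_2\ge p_3\ge p_4$. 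Under this reduction the right-hand side ``$p=[0,0,a,b]^t$, $q=[0,0,0,0]^t$'' is precisely the assertion that, after sorting, two coordinates of $p$ coincide, i.e.\ $p_1=p_2$ or $p_2=p_3$ or $p_3=p_4$.

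With $q=[0,0,0,0]^t$ the minors collapse to $m_{ij}=\max(p_i,p_j)$, so the monotonicity $p_1\ge p_2\ge p_3\ge p_4$ gives $m_{12}+m_{34}=p_1+p_3$ and $m_{13}+m_{24}=m_{14}+m_{23}=p_1+p_2$. By the type rules of Section~\ref{sec:background}, $L(p,q)$ then has type $[12,34]$ when $p_2>p_3$ and is a tetrapod exactly when $p_2=p_3$. Substituting the reduced minors into the vertex formulas (\ref{eqn:v^12}) and (\ref{eqn:v^34}) gives, up to adding a constant to all coordinates, $v^{12}=[p_1-p_3,\,p_2-p_3,\,0,\,0]^t$ and $v^{34}=[p_1-p_2,\,0,\,0,\,0]^t$.

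From these three reduced data I would read off every possible coincidence. The tetrapod case $p_2=p_3$ already yields $v=w$, hence $\card\{p,q,v,w\}\le 3$. In the non-tetrapod case $p_2>p_3$ the edge is nondegenerate, so $v^{12}\ne v^{34}$, and comparing the displayed vertices with $p=[p_1,p_2,p_3,p_4]^t$ and $q=[0,0,0,0]^t$ shows $v^{34}=q\iff p_1=p_2$ and $v^{12}=p\iff p_3=p_4$, while the only remaining potential identifications ($v^{12}=q$ or $v^{34}=p$) force a triple equality among the $p_i$ and hence already lie in the tetrapod case. This settles both directions simultaneously: if two sorted-adjacent coordinates of $p$ agree then one of the four points collapses onto another, whereas if $p_1>p_2>p_3>p_4$ strictly then $p\ne q$ (given), $v^{12}\ne v^{34}$, and none of the generator–vertex identifications can hold, so the four points are pairwise distinct and $\card\{p,q,v,w\}=4$.

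The only real obstacle is the bookkeeping of coincidences: I must verify that in the strict case no \emph{accidental} identification occurs among $p,q,v^{12},v^{34}$ beyond the three governed by $p_1=p_2$, $p_2=p_3$, $p_3=p_4$, and, conversely, that these three adjacency equalities exhaust the degenerations. The explicit reduced vertices above turn this into a finite, transparent comparison rather than a genuine difficulty; the conceptual content sits entirely in the opening reduction, which both normalises $q$ to the origin and legitimises sorting the coordinates of $p$ without loss of generality.
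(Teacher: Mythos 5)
Your proof is correct. For the forward implication it runs along the same lines as the paper's: normalize $q$ to the origin by a tropical change of variables, compute the minors $m_{ij}$, read off the type, and take the vertices from formulas (\ref{eqn:v^12}) and (\ref{eqn:v^34}). Where you genuinely depart from the paper is the converse. The paper proves ``no two coordinates of $p-q$ coincide $\Rightarrow$ $\card\{p,q,v,w\}=4$'' qualitatively: it invokes the balance condition to argue that one cannot travel from $q$ to $p$ along $L(p,q)$ through only two classical segments, hence the line has a bounded edge and the four points are distinct; the generator--vertex non-coincidences are left implicit in that path-counting argument. You instead let the sorting normalization $p_1\ge p_2\ge p_3\ge p_4$ do all the work: the minors collapse to $m_{ij}=p_i$ for $i<j$, the type is $[12,34]$ or tetrapod according to $p_2>p_3$ or $p_2=p_3$, and the reduced vertices $v^{12}=[p_1-p_3,\,p_2-p_3,\,0,\,0]^t$, $v^{34}=[p_1-p_2,\,0,\,0,\,0]^t$ turn the whole lemma into one finite coincidence check (your table is right: $v^{34}=q\iff p_1=p_2$, $v^{12}=p\iff p_3=p_4$, $v^{12}=v^{34}\iff p_2=p_3$, and $v^{12}=q$ or $v^{34}=p$ force triple equalities already covered by the tetrapod case). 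This buys two things: uniformity, since both implications fall out of the same computation rather than two different arguments, and a tighter converse, since ``$L(p,q)$ has a bounded edge'' alone does not yet give cardinality four --- one still needs to exclude $p$ or $q$ landing on a vertex, which your explicit comparison settles and the paper's argument only handles implicitly.
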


\begin{proof}
Suppose that $p=[0,0,a,b]^t$ and $q=[0,0,0,0]^t$.
Write $M=a\oplus b$ and $a^+=a\oplus0$.  Recall here the notation $m_{ij}$ introduced in p.         \pageref{not:mij}. By
straightforward computations,  $$m_{12}+m_{34}=M,\quad m_{13}+m_{23}=m_{14}+m_{23}=a^++b^+.$$ Clearly, $M\le a^++b^+$ and $L(p,q)$ is a tetrapod if and only if $M=a^++b^+$ or, equivalently, $ab\le 0$. In this case, $\card\{p,q,v,w\}\le3$. Otherwise,  $ab>0$ and the type of $L(p,q)$ is $[12,34]$, with
$$v^{12}=[-M,-M,-b^+,-a^+]^t, \quad v^{34}=[-b^+-a^+,-b^+-a^+,-b^+,-a^+]^t.$$
 If we simplify the former expressions,
in each particular case, we obtain
\begin{enumerate}
\item if $a=b$, then the vertices of $L(p,q)$ are $p$ and $q$, so that $\card\{p,q,v,w\}=2$,
\item if $a\neq b$, then the vertices of $L(p,q)$ are either $p$ or  $q$, but not both, and one more point, so that $\card\{p,q,v,w\}=3$.
\end{enumerate}


Suppose now that $p=[a,b,c,0]^t$ and $q=[0,0,0,0]^t$, with non--zero  $a,b,c$ and not $a=b=c$. The tropical line $L(p,q)$ has four rays, one in each negative coordinate direction $e_j$, $j\in[3]$, and one in the positive direction $e_{123}$. Moreover,   at each point of $L(p,q)$ the balance condition holds. Since   $a,b,c$ are non--zero and they are different, we cannot  go from
$q$ to $p$ along $L(p,q)$ running through \textbf{only two} classical segments. Therefore,  $L(p,q)$ has a bounded edge $e$ (having
direction $e_{kl}$, for some different $k,l\in[4]$) and $\card\{p,q,v,w\}=4$.

%
\end{proof}

Here are
more notations for a given order 4 matrix $A$. Choose $i\neq j$ in $[4]$; the vertices of the  tropical line $L_{ij}$ are  $\underline{ij}$ and  $\underline{ji}$ named so that $\underline{ij}$ 
     is the \textbf{closest to $\underline{i}$}, with respect to  tropical distance. 
     Of course, $\underline{ji}=\underline{ij}$ if and only if  $L_{ij}$ is a tetrapod. We say that the extremals $\underline{ij}$ and $\underline{ji}$ are \textbf{2--generated}.

\m
Now we introduce \textbf{adiffs}, which provide the tropical distance between some pairs of extremals.
Let $i,j,k,l\in[n]$ with $k<l$ and $i<j$. By $A(kl;ij)$ we denote the $2\times 2$ minor $\left[\begin{array}{cc}
a_{ki}&a_{kj}\\
a_{li}&a_{lj}\\
\end{array}\right]$.
Write $\adiff_A (kl;ij)$ to denote
\begin{equation}
|a_{ki}+a_{lj}- a_{kj}-a_{li}|,
\end{equation}
 i.e., the \textbf{absolute value of the difference} of the items in the maximum below
 \begin{equation}
 |A(kl;ij)|_{\trop}=\max\{a_{ki}+a_{lj}, a_{kj}+a_{li}\}.
 \end{equation}
We extend  the notation as follows: $\adiff_A (lk;ij)=\adiff_A (kl;ji)=\adiff_A (lk;ji)$ are all equal to the already defined $\adiff_A (kl;ij)$, with $k<l$ and $i<j$.

\m
The following properties are easy to check, for $i,j,k,l\in[n]$:
\begin{enumerate}
\item $\alpha_{ii}-\alpha_{ij}=\adiff_A(in;ij)$,
\item $\adiff_A(ij;kl)=\adiff_{A_0}(ij;kl)$.
\end{enumerate}

We simply write $\dist(p,q)$, when the Euclidean and tropical distances between points $p$ and $q$ coincide.

\begin{thm}\label{thm:adiffs} Assume hypothesis 1 for an order 4 matrix $A$.
Let   $\{i,j,k,l\}=[4]$ with  $i<j$.
If the type of the tropical line $L_{ij}$ is $[ik,jl]$, then $$\dist (\underline{i}, \underline{ij})=
\adiff_A (jl;ij),\quad \dist (\underline{j}, \underline{ji})=
\adiff_A (ik;ij),$$
$$\tdist (\underline{ij}, \underline{ji})=\adiff_A (kl;ij).$$
\end{thm}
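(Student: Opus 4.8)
The plan is to reduce to a single normalized case and then compute everything explicitly, the engine being the collapse of the $2\times 2$ minors under the Kleene--star inequalities \eqref{eqn:Kleene}. Since the construction of $L(p,q)$, the minors $m_{ab}$ of \eqref{not:mij}, and the quantity $\adiff_A$ are all equivariant under a simultaneous relabeling of the index set $[4]$, I may assume without loss of generality that $(i,j,k,l)=(1,2,3,4)$, so that the hypothesis reads: the type of $L_{12}$ is $[ik,jl]=[13,24]$. Put $p=\col(A,1)$, $q=\col(A,2)$ and form the six minors $m_{ab}$.

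The crucial first step is to simplify these minors. Because $a_{11}=a_{22}=0$ and because in each relevant maximum the two terms are comparable through a single Kleene inequality $a_{ab}+a_{bc}\le a_{ac}$, one finds $m_{12}=0$, $m_{13}=a_{32}$, $m_{14}=a_{42}$, $m_{23}=a_{31}$, $m_{24}=a_{41}$, whereas $m_{34}=|A(34;12)|_{\trop}$. By theorem \ref{thm:types}, the type being $[13,24]$ forces the maximum defining $m_{34}$ to be attained on the main diagonal, so $m_{34}=a_{31}+a_{42}$. This turns all of the line data into individual entries of $A$, after which everything is substitution.

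Next I would write down the four relevant points in $\R^3$ (working in $X_4=0$). The generators are the columns of $A_0$, namely $\underline{1}=(-a_{41},\,a_{21}-a_{41},\,a_{31}-a_{41})$ and $\underline{2}=(a_{12}-a_{42},\,-a_{42},\,a_{32}-a_{42})$. Feeding the simplified minors into the ray parametrizations \eqref{eqn:r12}--\eqref{eqn:r34} and intersecting the appropriate pairs of rays (equivalently, using the closed forms \eqref{eqn:v^13}--\eqref{eqn:v^24} and normalizing the fourth coordinate to $0$) gives the two vertices of $L_{12}$ as $(-a_{41},-a_{42},a_{31}-a_{41})$ and $(a_{32}-a_{31}-a_{42},-a_{42},a_{32}-a_{42})$. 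I would then compute the three coordinate--difference vectors: $\underline{1}$ minus the first vertex is supported only on $X_2$, with absolute value exactly $\adiff_A(24;12)$; $\underline{2}$ minus the second vertex is supported only on $X_1$, equal to $\adiff_A(13;12)$; and the difference of the two vertices is supported on $X_1$ and $X_3$ with equal entries $a_{31}+a_{42}-a_{32}-a_{41}$, of absolute value $\adiff_A(34;12)$. Since the first two displacements lie along a single axis ($e_2$, resp.\ $e_1$), the Euclidean and tropical distances coincide there, which is what licenses writing $\dist$; the third displacement has direction $e_{13}$, so only $\tdist$ is asserted, and it equals $\adiff_A(34;12)$.

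I expect the main obstacle to be the vertex bookkeeping, i.e.\ certifying which of the two vertices is $\underline{12}$ and which is $\underline{21}$ in the sense of ``closest to $\underline{1}$, resp.\ $\underline{2}$'', together with pinning the signs in the vertex coordinates. This is exactly where the Kleene inequalities must be invoked a second time: from $a_{21}+a_{42}\le a_{41}$ (and its analogue) one reads off that the generator--to--nearer--vertex step is axis--aligned rather than diagonal, so the axis--aligned vertex realizes the (weakly) smaller tropical distance and is therefore the closest one. Once the minors have been reduced to entries of $A$, the remaining identifications with the three $\adiff_A$ values are routine, and the absolute values in the definition of $\adiff$ absorb the sign information so that no case distinction on the signs is needed.
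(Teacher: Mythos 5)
Your proposal is correct and follows essentially the same route as the paper's own proof: reduce to $(i,j,k,l)=(1,2,3,4)$, use the Kleene inequalities to collapse the minors $m_{ab}$ to single entries of $A$ (with $m_{34}$ fixed by the type via theorem \ref{thm:types}), obtain exactly the vertices $v^{13}=[-a_{41},-a_{42},a_{31}-a_{41},0]^t$ and $v^{24}=[a_{32}-a_{31}-a_{42},-a_{42},a_{32}-a_{42},0]^t$ that the paper displays, and read off the three distances by coordinate comparison, the axis--aligned displacements justifying $\dist$ and the $e_{13}$--displacement giving $\tdist$. Your identification of which vertex is $\underline{12}$ (comparing $\tdist(\underline{1},v^{13})$ with $\tdist(\underline{1},v^{24})$, whose coordinate supports are disjoint) is the same bookkeeping step the paper performs, stated if anything slightly more carefully.
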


\begin{proof} 
Without lost of generality,  assume that $i=1$, $j=2$, so that $\{k,l\}=\{3,4\}$. Recall that the coordinates of the vertices of  $L_{12}$  depend on the type of $L_{12}$.

Say the type of $L_{12}$ is $[13,24]$; then $k=3$, $l=4$.  Using formulas (\ref{eqn:v^13}) and (\ref{eqn:v^24}), the vertices of line $L_{12}$ are
\begin{equation}\label{eqn:v^13,v^24}
v^{13}=\colur {0}{a_{41}-a_{42}}{a_{31}}{a_{41}}=\colur{-a_{41}}{-a_{42}}{a_{31}-a_{41}}{0},\quad v^{24}=\colur{a_{32}-a_{31}}{0} {a_{32}}{a_{42}}=\colur{a_{32}-a_{31}-a_{42}}{-a_{42}} {a_{32}-a_{42}}{0}.
\end{equation}

 The generators \underline{1} and \underline{2} have coordinates
$$\colu {0}{a_{21}}{a_{31}}{a_{41}},\quad \colu {a_{12}}{0}{a_{32}}{a_{42}},$$
respectively. Notice that three coordinates of $v^{13}$ and \underline{1} coincide and only the second one  is different. Therefore, the tropical distance and Euclidean distance between these two points coincide, being $a_{41}-a_{42}-a_{21}=|a_{41}-a_{42}-a_{21}|=\adiff_A (24;12)$, by inequalities (\ref{eqn:Kleene}). Moreover, we can check that the tropical distance between $v^{13}$ and \underline{2} is no less than $\adiff_A (24;12)$, whence $v^{13}=\underline{12}$ and $v^{24}=\underline{21}$.
Also, comparing $v^{24}$ and \underline{2}, only the first coordinate is different. Therefore the tropical distance and Euclidean distance between these two points coincide, being $a_{32}-a_{31}-a_{12}=|a_{32}-a_{31}-a_{12}|=\adiff_A (13;12)$. The tropical distance between points $v^{13}$ and $v^{24}$ is easily computed to be $\adiff_A (34;12)$.
 Computations are similar  if the type of line $L_{12}$ is $[14,23]$. This proves the second statement.

%
\end{proof}

In the previous theorem,
\emph{the $2\times 2$ tropical minors of $A$ involving three or four different indices} come into play. There are 30  such minors in $A$.
\textbf{Assume that  all the $2\times 2$  minors of $A$ involving three or four different indices are tropically regular} (so that they have non--zero adiffs!). We  call this  \textbf{hypothesis 2}.

\begin{lem}\label{lem:maximal}
Assume hypotheses 1  and 2 for an order 4 matrix $A$. Then $A$ is maximal and, moreover, all $2\times 2$ minors of $A$ are tropically regular.
\end{lem}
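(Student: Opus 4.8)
The plan is to separate the statement into two halves: first prove that every principal $2\times 2$ minor of $A$ is tropically regular (these are precisely the $2\times 2$ minors not already covered by hypothesis 2), and then deduce maximality from the fact that, consequently, \emph{all} $2\times 2$ minors have non-zero adiff. I would begin by recording the shape of the tropical determinant of a Kleene star: for any $\sigma\in S_4$, decomposing $\sigma$ into cycles and applying the triangle inequalities (\ref{eqn:Kleene}) along each cycle $(i_1\ldots i_m)$ gives $a_{i_1 i_2}+\cdots+a_{i_m i_1}\le a_{i_1 i_1}=0$, so every permutation contributes at most $0$ to $|A|_{\trop}$ while the identity contributes exactly $0$; hence $|A|_{\trop}=0$. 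Recall that hypothesis 1, the tropical non-coplanarity of the four columns, is exactly the statement that $|A|_{\trop}$ is tropically regular, i.e. that the value $0$ is attained by the identity permutation alone.

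The key step is the following observation. Among all $2\times 2$ minors $A(kl;ij)$ (with $k<l$, $i<j$), the only ones whose index set $\{i,j,k,l\}$ has fewer than three elements are the principal minors $A(ij;ij)=\left[\begin{smallmatrix}0&a_{ij}\\a_{ji}&0\end{smallmatrix}\right]$, and their regularity means $a_{ij}+a_{ji}\neq 0$. The Kleene inequalities give $a_{ij}+a_{ji}\le a_{ii}=0$, so I only need to exclude equality. But if $a_{ij}+a_{ji}=0$ for some $i\neq j$, then the transposition $(ij)\in S_4$ contributes $a_{ij}+a_{ji}+\sum_{k\neq i,j}a_{kk}=0$ to $|A|_{\trop}$, so the maximum $0$ would be attained a second time and $A$ would be tropically singular, contradicting hypothesis 1. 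Therefore $a_{ij}+a_{ji}<0$ for every $i\neq j$, the principal minors are regular, and combined with hypothesis 2 this shows that \emph{all} $2\times 2$ minors of $A$ are tropically regular.

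It remains to prove maximality. With all $2\times 2$ minors regular, I would note that two coordinates of the vector $\underline i-\underline j$ coincide, in positions $k<l$, exactly when $\adiff_A(kl;ij)=0$; since every adiff is now non-zero, the four coordinates of $\underline i-\underline j$ are pairwise distinct. By lemma \ref{lem:dos_puntos} this forces $\card\{\underline i,\underline j,\underline{ij},\underline{ji}\}=4$, so each of the six lines $L_{ij}$ carries four distinct extremals, and theorem \ref{thm:adiffs} turns the non-zero adiffs into the exact positive tropical distances separating a generator from its adjacent line-vertex and the two vertices of a line from each other. The four plane-vertices $\underline{ijk}$ are the columns of $-A^t$ by lemma \ref{lem:transpose}; since $A^t$ is again a Kleene star satisfying hypotheses 1 and 2, the same reasoning applies to it, and under the antipodal map $s$ the generators of $\spann(A^t)$ are carried onto the plane-vertices of $\spann(A)$. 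By genericity the local configuration at each oddly-generated extremal is then forced into one of the maximal cases listed on p.\pageref{par:summary}, so each of the four tropical triangles obtained by deleting a column is maximal in extremals ($v=10$, $f=3$); assembling these yields $20=4+4+12$ extremals and $12$ facets, i.e. $A$ is maximal.

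The main obstacle I anticipate lies in this last assembling step: ruling out \emph{all} accidental coincidences among the twenty extremals globally, rather than only those internal to a single line or a single triangular face. Distinctness within a line and within a face is immediate from the non-vanishing adiffs and theorem \ref{thm:adiffs}, but verifying that vertices coming from different lines, and the four plane-vertices, are mutually distinct (and that exactly twelve facets arise) requires combining the configuration classification of p.\pageref{par:summary} with the antipodal symmetry $\spann(A^t)=s(\spann(A))$ of lemma \ref{lem:transpose}.
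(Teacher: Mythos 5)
Your first half---tropical regularity of the principal minors $A(ij;ij)$---is correct, and it takes a genuinely different route from the paper. You derive it directly from hypothesis 1: the cycle decomposition together with the Kleene inequalities (\ref{eqn:Kleene}) gives $|A|_{\trop}=0$, attained by the identity, and an equality $a_{ij}+a_{ji}=0$ would let the transposition $(ij)$ attain $0$ a second time, contradicting the tropical regularity of $A$ that hypothesis 1 provides (the equivalence between non--coplanarity and regularity that you use is the one the paper itself invokes in remark \ref{rem:open_q}, citing \cite{Richter}). The paper runs the logic in the opposite direction: it proves maximality first, and then observes that a singular principal minor would, via lemma \ref{lem:dos_puntos}, make $L_{12}$ contribute fewer than four extremals, contradicting the maximality just established. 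Your ordering makes the two halves of the lemma logically independent, which is a small but real gain.

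The genuine gap is in the maximality half, and it is precisely the step you flag at the end. Non--vanishing adiffs plus lemma \ref{lem:dos_puntos} do give you that the four points $\underline{i},\underline{j},\underline{ij},\underline{ji}$ on each \emph{single} line are distinct; but distinctness of all twenty extremals does not follow from ``genericity'', and your appeal to the configuration list on p.\pageref{par:summary} is circular, since that list is derived under the standing assumption that the triangle is already generic and maximal in extremals ($v=10$, $f=3$), which is what you are trying to prove. The missing idea is the paper's one--line use of hypothesis 1 to control the mutual intersections of the six lines: for $\{i,j,k,l\}=[4]$, non--coplanarity of the generators forces $L_{ij}\cap L_{ik}=\{\underline{i}\}$ and $L_{ij}\cap L_{kl}=\emptyset$ (two lines meeting beyond the common generator, or meeting despite having disjoint indices, would place the four generators on a common tropical plane). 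This kills all cross--line coincidences at once: a coincidence $\underline{ij}=\underline{ik}$ would be a point of $L_{ij}\cap L_{ik}$ different from $\underline{i}$, and $\underline{ij}=\underline{kl}$ would be a point of $L_{ij}\cap L_{kl}$. Combined with the per--line distinctness coming from theorem \ref{thm:adiffs} under hypothesis 2, this yields the count $20=4+4+12$, i.e.\ maximality. So the repair is to replace your genericity/antipodal--symmetry sketch by this intersection argument; the transfer to $A^t$ via lemma \ref{lem:transpose} is not needed for the count, although it is a reasonable device if one additionally wants to treat coincidences involving the points $\underline{ijk}$ symmetrically.
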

\begin{proof}
Suppose $\{i,j,k,l\}=[4]$. Then $L_{ij}\cap L_{ik}=\{\underline{i}\}$ and $L_{ij}\cap L_{kl}=\emptyset$, by hypothesis 1. Then by  theorem \ref{thm:adiffs},  $A$ is maximal.

Now, consider a minor involving only two indices, say $A(12,12)$. It is tropically singular if and only if $a_{12}=a_{21}=0$. If this is the case then, by lemma \ref{lem:dos_puntos},  $L_{12}$ provides less that four extremals to $\spann(A)$, so that $A$ is not maximal.
\end{proof}

\begin{rem}\label{rem:open_q}
Let  four points in $\TP$ be given as the columns of a matrix $C$. The points are in \textbf{tropical general position} if, by definition,  all the $k\times k$ minors of $C$ are tropically regular, for all $2\le k\le 4$.

If $A$  satisfies hypotheses 1 and 2, then $A$ is  tropically regular, by \cite{Richter}, and all the $2\times 2$  minors of $A$ are tropically regular, by lemma \ref{lem:maximal}.  But, are all the $3\times 3$  minors of $A$  tropically regular, i.e., are the points represented by the columns of $A$ in general position? We cannot answer this question yet.
\end{rem}


\begin{ex}
Here is a convex symmetric non--maximal example.
\begin{equation}\label{eqn:tambor}
A=\left[\begin{array}{rrrr}
0&-4&-6&-10\\
-4&0&-10&-6\\
-6&-10&0&-4\\
-10&-6&-4&0
\end{array}\right].
\end{equation}
The f--vector of $\spann(A)$ is $(8,14,8)$, the polygon--vector is $(4,4,0,0)$. Lines $L_{13}$, $L_{14}$, $L_{23}$ and $L_{24}$ are of type $[12,34]$ and  $L_{12}$ and $L_{34}$ are of type $[13,24]$; see figure \ref{figure_11}.
\end{ex}

\begin{figure}[h]
 \centering
  \includegraphics[keepaspectratio,width=14cm]{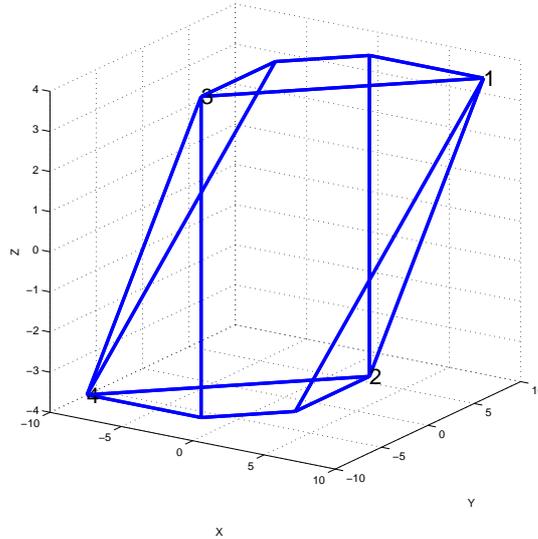}\\
  \caption{The tropical tetrahedron given by matrix (\ref{eqn:tambor}).}
  \label{figure_11}
\end{figure}

\subsection{Towards a classification of the combinatorial types of $\spann(A)$}
From now on, we will assume that $A$ satisfies hypotheses 1 and 2.

\label{dfn:ten_points}
Choose an oddly generated extremal in $\spann(A)$  and look at the configuration at this point: we know that it is either  $(5.5.5)$ right or left or $(p.q.r)$, with $\{p,q,r\}=\{4,5,6\}$. 
The following lemmas tell us that this configuration is encoded in the types of the 3 tropical lines passing through it.

\begin{lem}\label{lem:123right}
Assume hypotheses 1 and 2.
The following are equivalent:
 \begin{enumerate}
 \item the  point \underline{123} is $(5.5.5)$ right,
\item  $L_{12}$ is of type $[14,23]$, $L_{23}$ is of type $[24,13]$ and $L_{31}$ is of type $[34,12]$.
 \end{enumerate}
 In such a case, the table below shows triples of points and the equations of the classical planes they classically span:
 $$\begin{tabular}{c|c}
 points&equation\\
 \hline
 \underline{2} \underline{12} \underline{21}&$X_3-X_2=a_{32}$\\
 \underline{3} \underline{23} \underline{32}&$X_1-X_3=a_{13}$\\
 \underline{1} \underline{31} \underline{13}&$X_2-X_1=a_{21}$.\\
 \end{tabular}$$
\end{lem}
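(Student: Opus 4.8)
The plan is to prove the equivalence by showing that the configuration $(5.5.5)$ right at $\underline{123}$ is determined, via the antipodal symmetry of Lemma~\ref{lem:transpose}, by the configuration at the opposite generator $\underline{4}$ in $\spann(A^t)$, and then to translate that configuration into the types of the three tropical lines through $\underline{123}$. First I would invoke Lemma~\ref{lem:transpose} (together with Lemma~\ref{lem:transpose} on the antipodal map $s$): since $s$ carries $\underline{4}$ in $\spann(A^t)$ onto $\underline{123}$ in $\spann(A)$ and reverses chirality, the point $\underline{123}$ is $(5.5.5)$ right in $\spann(A)$ if and only if $\underline{4}$ is $(5.5.5)$ left in $\spann(A^t)$. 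This reduces the problem to the explicit model for a $(5.5.5)$ configuration at a generator, namely the matrix in~(\ref{eqn:3penta_left}) or (\ref{eqn:3penta_right}), normalized so the relevant vertex is at the origin.

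Next I would read off the line types directly from that normal form. With the generators $\underline{1},\underline{2},\underline{3}$ sitting one per quadrant $Q_1,Q_2,Q_3$ in the right-model configuration, I can compute the three $2\times 2$ minors $m_{ij}$ for each pair of columns and apply the type criterion of Theorem~\ref{thm:types}: for a Kleene star, the type of $L_{ij}$ is $[ik,jl]$ or $[il,jk]$ according to whether $|A(kl;ij)|_{\trop}$ is attained on the main or secondary diagonal. The claim is that the right-model forces $L_{12}$ to be $[14,23]$, $L_{23}$ to be $[24,13]=[13,24]$, and $L_{31}=L_{13}$ to be $[34,12]=[12,34]$; I would verify this by substituting the parametrized entries of~(\ref{eqn:3penta_right}) and checking which diagonal wins in each minor. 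Because hypothesis~2 guarantees all these minors are tropically regular (strict inequalities), each line genuinely has an edge and the types are unambiguous, which also gives the reverse implication: fixing these three types pins down the sign pattern of the minors, hence the right (rather than left) chirality.

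Finally, for the table of classical planes I would use Theorem~\ref{thm:adiffs} and the explicit vertex formulas~(\ref{eqn:v^12})--(\ref{eqn:v^24}) to write down the coordinates of the six relevant extremals $\underline{2},\underline{12},\underline{21},\underline{3},\underline{23},\underline{32},\underline{1},\underline{31},\underline{13}$ in terms of the entries $a_{ij}$, and then simply check that each listed triple satisfies the stated linear equation. Concretely, for the row $\underline{2}\,\underline{12}\,\underline{21}$ I would compute $X_3-X_2$ at all three points using the coordinates of the generator $\underline{2}=\col(A_0,2)$ and the $L_{12}$-vertices supplied by Theorem~\ref{thm:adiffs}, and confirm the common value is $a_{32}$; the other two rows follow by the cyclic symmetry $1\to2\to3\to1$ of the right-model. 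The main obstacle I anticipate is purely bookkeeping: keeping the vertex-coordinate formulas consistent across the different line types (the coordinates of $v^{ij}$ depend on the type, as emphasized after~(\ref{eqn:v^1234})), so I would fix the normalization $v^{\Pi}=[0,0,0,0]^t$ early and track every minor's winning diagonal carefully rather than risk a sign error that would flip right into left.
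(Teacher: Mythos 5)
Your proposal is correct in substance, but it takes a different route from the paper's. For the equivalence, the paper argues geometrically: from the right--hand model in figure \ref{figure_04} it reads off that the bounded edges of $L_{12}$, $L_{23}$, $L_{31}$ have directions $e_{23}$, $e_{31}$, $e_{12}$, and then applies the balance--condition dictionary stated just before theorem \ref{thm:types} (the edge of $L$ has direction $e_{ij}$ if and only if $L$ has type $[ij,k4]$). You instead substitute the parametrized normal form (\ref{eqn:3penta_right}) into the three relevant $2\times 2$ minors and invoke the diagonal criterion from the proof of theorem \ref{thm:types}; this does work (for $L_{12}$ the secondary diagonal of $A(34;12)$ wins because $\gamma_1>0$, forcing type $[14,23]$, and the other two lines come out as $[13,24]$ and $[12,34]$ in the same way), and your converse is sound because, under hypothesis 2, the three binary ``which diagonal wins'' choices correspond bijectively to the eight possible configurations at \underline{123} (this is exactly the cell decomposition of figure \ref{figure_13}). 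For the table, the paper computes and factors the classical determinant $D(\underline{2},\underline{12},\underline{21})=\pm\adiff_A(34;12)\,\adiff_A(14;24)\,(X_2-X_3+a_{32})$ and cites hypothesis 2, whereas you verify point by point that the listed extremals satisfy the stated equation. Your version is figure--free and purely mechanical; the paper's is shorter and makes chirality visible geometrically.

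Two corrections. First, your opening reduction through the antipodal map is backwards and should simply be deleted: the normal forms (\ref{eqn:3penta_left}) and (\ref{eqn:3penta_right}) describe the configuration at the vertex of the tropical plane spanned by three generators, i.e.\ exactly at \underline{123}, not at a generator; passing to \underline{4} in $\spann(A^t)$ moves you \emph{away} from where those models apply (you would have to transpose back again), and indeed your second paragraph silently works with \underline{123} and the right model directly, so the detour does no work. Second, checking that $\underline{2}$, $\underline{12}$, $\underline{21}$ satisfy $X_3-X_2=a_{32}$ only shows that they lie on that plane; to conclude that they \emph{span} it you must also note that they are not collinear. This does follow from hypothesis 2 (the segments $\underline{2}\,\underline{21}$ and $\underline{21}\,\underline{12}$ have nonzero lengths, given by adiffs, and independent directions $e_1$ and $e_{23}$), but it needs saying; the paper's factorization delivers it for free, since the nonvanishing adiff factors show the determinant is not identically zero.
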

\begin{proof}
 From the right--hand--side of figure \ref{figure_04} we conclude that
  the direction of the bounded edge of line $L_{12}$ is $e_{23}$, of line $L_{23}$ is $e_{31}$ and of line $L_{31}$ is $e_{12}$.
The equivalence follows from here.

Next, recall  from elementary linear  algebra that the equation of the plane generated by three points $p,q,r$ in $\R^3$ is given by setting
a certain $4\times 4$ determinant, denoted $D(p,q,r)$, equal to zero:
the first three rows of $D(p,q,r)$ are filled in with the coordinates of $p,q,r$ and the coordinates of an indeterminate point  $(X,X_2,X_3)^t$ and
the fourth row  is full of ones. The coordinates of \underline{12} and \underline{21} are easily obtained either from expressions
(\ref{eqn:r12}) and (\ref{eqn:r34}) or from figure \ref{figure_04} and theorem \ref{thm:adiffs}. Now, we compute and factor the classical determinant $D(\underline{2}, \underline{12}, \underline{21})$, obtaining
\begin{eqnarray*}
&\left|\begin{array}{rrrr}
a_{12}-a_{42}&-a_{41}&-a_{41}&X_1\\
-a_{42}&a_{31}-a_{41}-a_{32}&-a_{42}&X_2\\
a_{32}-a_{42}&a_{31}-a_{41}&a_{32}-a_{42}&X_3\\
1&1&1&1\\
\end{array}\right|\\
=&\pm \adiff_A (34;12)\adiff_A (14; 24)(X_2-X_3+a_{32}).\\
\end{eqnarray*}
By hypothesis 2, the adiffs in the line right above are non--zero, so that the equation reduces to  $X_3-X_2=a_{32}$. The computations are similar for other classical planes.
\end{proof}

\begin{lem}\label{lem:123left}
Assume hypotheses 1 and 2 for $A$.
The following are equivalent:
 \begin{enumerate}
 \item the  point \underline{123} is $(5.5.5)$ left,
\item  $L_{12}$ is of type $[13,24]$, $L_{23}$ is of type $[12,34]$ and $L_{31}$ is of type $[14,23]$.
\end{enumerate}
In such a case, the table below shows triples of points and the equations of the classical planes they classically span:
 $$\begin{tabular}{c|cc}
 points&equation\\
 \hline
 \underline{1} \underline{12} \underline{21}&$X_3-X_1=a_{31}$\\
 \underline{2} \underline{23} \underline{32}&$X_1-X_2=a_{12}$\\
 \underline{3} \underline{31} \underline{13}&$X_2-X_3=a_{23}$.&\qed\\
 \end{tabular}$$
 \end{lem}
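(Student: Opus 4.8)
The plan is to deduce this lemma from Lemma~\ref{lem:123right} by relabelling, rather than repeating the determinant computation. The key observation is that the index transposition $\tau=(1\,2)$ carries the ``right'' statement onto the ``left'' one. Concretely, let $A'=P^t A P$, where $P$ is the permutation matrix of $\tau$, so that $A'$ is obtained from $A$ by swapping rows $1,2$ and columns $1,2$ simultaneously. Since the Kleene--star conditions (\ref{eqn:Kleene}) are invariant under any simultaneous permutation of rows and columns, $A'$ is again a Kleene star, and hypotheses~1 and~2 clearly pass to $A'$. Moreover $\spann(A')$ is the image of $\spann(A)$ under the coordinate swap $X_1\leftrightarrow X_2$ (with $X_3,X_4$ fixed), which is an isometry for both tropical and Euclidean distance.

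First I would check that this coordinate swap reverses chirality: as a linear map of $\R^3$ it has determinant $-1$, so it carries a $(5.5.5)$ left configuration at $\underline{123}$ in $\spann(A)$ to a $(5.5.5)$ right configuration at $\underline{123}$ in $\spann(A')$, and conversely. Next I would apply Lemma~\ref{lem:123right} to $A'$: it gives that $\underline{123}$ is $(5.5.5)$ right for $A'$ if and only if the lines $L_{12},L_{23},L_{31}$ of $A'$ have types $[14,23],[24,13],[34,12]$. Translating these types back through $\tau$ (a type $[ab,cd]$ for a line of $A'$ corresponds to the type $[\tau(a)\tau(b),\tau(c)\tau(d)]$ for the same line of $A$) yields exactly the types $[13,24],[12,34],[14,23]$ for $L_{12},L_{23},L_{31}$ of $A$, which is the equivalence (1)$\Leftrightarrow$(2). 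The three plane equations transform the same way: performing the substitution $X_i\mapsto X_{\tau(i)}$, $a_{ij}\mapsto a_{\tau(i)\tau(j)}$ in the table of Lemma~\ref{lem:123right} turns its three rows into the three rows claimed here (for instance $X_3-X_2=a_{32}$ for $\underline{2}\,\underline{12}\,\underline{21}$ becomes $X_3-X_1=a_{31}$ for $\underline{1}\,\underline{12}\,\underline{21}$), so the planes follow at once from Lemma~\ref{lem:123right} applied to $A'$.

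The point needing the most care is the chirality reversal, i.e.\ verifying that the orientation--reversing coordinate swap genuinely interchanges the left and right versions of the $(5.5.5)$ configuration, together with the bookkeeping of the $\underline{ij}$ versus $\underline{ji}$ labels (the isometry sends the vertex closest to $\underline{1}$ to the vertex closest to $\underline{2}$); everything else is formal. Should one prefer a self--contained argument, the alternative is to copy the proof of Lemma~\ref{lem:123right} line by line, now reading the \emph{left} model of figure~\ref{figure_04} (whose bounded edges have directions $e_{13}$, $e_{12}$, $e_{23}$ for $L_{12},L_{23},L_{31}$) and computing $D(\underline{1},\underline{12},\underline{21})$ with the vertex coordinates $\underline{12}=v^{13}$, $\underline{21}=v^{24}$ taken from formulas (\ref{eqn:v^13}) and (\ref{eqn:v^24}), the identification being the one made in the proof of Theorem~\ref{thm:adiffs}. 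In that route the main obstacle is the factorization of the $4\times4$ determinant as $\pm\,\adiff_A(\,\cdot\,;\cdot)\,\adiff_A(\,\cdot\,;\cdot)\,(X_3-X_1-a_{31})$, after which hypothesis~2 forces the two adiffs to be non--zero and leaves the plane $X_3-X_1=a_{31}$.
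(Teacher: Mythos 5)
Your argument is correct, but it takes a genuinely different route from the paper's. The paper proves Lemma \ref{lem:123right} by reading edge directions off the right--hand model of figure \ref{figure_04} and factoring the classical determinant $D(\underline{2},\underline{12},\underline{21})$, and then states the present lemma with the proof omitted as analogous; that omitted computation is exactly your fallback self--contained route. Your primary route instead derives the left case formally from Lemma \ref{lem:123right}: replace $A$ by $A'$, obtained by simultaneously swapping rows and columns $1,2$; then $A'$ inherits the Kleene star property and hypotheses 1 and 2, and $\spann(A')$ is the image of $\spann(A)$ under the tropical isometry $\sigma$ exchanging $X_1\leftrightarrow X_2$. Your pull--back of types reaches the correct conclusion, though one bookkeeping point deserves emphasis: the transposition also permutes the lines themselves, $L'_{23}=\sigma(L_{31})$ and $L'_{31}=\sigma(L_{23})$, so ``the same line of $A$'' must be read as ``the corresponding line $L_{\tau(i)\tau(j)}$''; read naively it would scramble the $L_{23}$ and $L_{31}$ entries of the table. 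The crux you flag, that $\sigma$ exchanges $(5.5.5)$ left and right, need not rest on the determinant $-1$ heuristic (which presupposes that the two configurations are genuinely chiral): swapping rows and columns $1,2$ in the left normal form (\ref{eqn:3penta_left}) produces exactly the right normal form (\ref{eqn:3penta_right}) after renaming $\gamma_1\leftrightarrow\gamma_2$, $\delta_1\leftrightarrow\delta_2$, and since $\sigma$ preserves tropical distance the $\underline{ij}$ versus $\underline{ji}$ labels are carried over correctly as well. What your route buys: no repetition of the determinant factorization, and it turns the left/right duality into a structural statement about relabelling, which the paper only uses informally later (``the symmetric image of $(5.5.5)$ left is $(5.5.5)$ right''). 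What the paper's route buys: explicit coordinates for the vertices $\underline{ij}$, $\underline{ji}$ in the left configuration, which are quoted verbatim in the proof of Theorem \ref{thm:left_right}.
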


\qed\begin{lem}\label{lem:4right}
Assume hypotheses 1 and 2 for $A$.
The following are equivalent:
 \begin{enumerate}
 \item the  point \underline{4} is $(5.5.5)$ right,
\item  $L_{14}$ is of type $[12,34]$, $L_{24}$ is of type $[14,23]$ and $L_{34}$ is of type $[13,24]$.
\end{enumerate}
In such a case, the table below shows triples of points and the equations of the classical planes they classically span:
 $$\begin{tabular}{c|cc}
 points&equation\\
 \hline
 \underline{1} \underline{14}  \underline{41}&$X_2-X_1=a_{21}$\\
 \underline{2} \underline{24}  \underline{42}&$X_3-X_2=a_{32}$\\
 \underline{3} \underline{34}  \underline{43}&$X_1-X_3=a_{13}$.&\qed\\
 \end{tabular}$$
 \end{lem}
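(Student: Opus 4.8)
The proof is completely parallel to that of Lemma \ref{lem:123right}, with \underline{4} playing the role of \underline{123} and the three lines $L_{14},L_{24},L_{34}$ through \underline{4} playing the role of $L_{12},L_{23},L_{31}$. There are two things to establish: the equivalence (1)$\Leftrightarrow$(2), and then, in that case, the three plane equations.

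For the equivalence I would read the directions of the three bounded edges off the right model of figure \ref{figure_10}. At a $(5.5.5)$ right point \underline{4} the edges of $L_{14}$, $L_{24}$ and $L_{34}$ point in the cyclic directions $e_{12}$, $e_{23}$ and $e_{31}$, respectively. By the edge--direction criterion stated just before Theorem \ref{thm:types} (a non--tetrapod line $L$ has edge direction $e_{ij}$ iff its type is $[ij,k4]$ with $\{i,j,k\}=[3]$), these directions translate into the types $[12,34]$, $[23,14]=[14,23]$ and $[13,24]$, which is exactly (2). For the converse, hypotheses 1 and 2 together with Lemma \ref{lem:maximal} guarantee that none of $L_{14},L_{24},L_{34}$ is a tetrapod (their adiffs are nonzero), so the three types in (2) force precisely the edge directions $e_{12},e_{23},e_{31}$; among the only admissible configurations at an oddly generated extremal (listed on p.\pageref{par:summary} and transported to \underline{4} by the antipodal remark on p.\pageref{rem:antipodal}), this cyclic triple of edge directions singles out $(5.5.5)$ right.

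For the plane equations I would argue as in the second half of Lemma \ref{lem:123right}. Fix one triple, say \underline{1} \underline{14} \underline{41}. Since $L_{14}$ has type $[12,34]$, its vertices are $v^{12}$ and $v^{34}$ given by formulas (\ref{eqn:v^12}) and (\ref{eqn:v^34}) evaluated on the minors $m_{kl}=|A(kl;14)|_{\trop}$; the vertex carrying rays $r_1,r_2$ is the one closest to \underline{1}, so $\underline{14}=v^{12}$ and $\underline{41}=v^{34}$ (consistent with Theorem \ref{thm:adiffs}). I would then form the classical $4\times4$ determinant $D(\underline{1},\underline{14},\underline{41})$ (three rows of coordinates, one indeterminate row $(X_1,X_2,X_3)$, and a row of ones), expand it and factor. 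As in Lemma \ref{lem:123right} it comes out as $\pm$ (two $\adiff$ factors) times a single linear form; by hypothesis 2 the two $\adiff$ factors are nonzero, leaving the equation $X_2-X_1=a_{21}$ (which indeed holds at \underline{1}$=\col(A_0,1)$, since $X_2-X_1=a_{21}-a_{41}-(-a_{41})=a_{21}$). The remaining two rows of the table follow by the same computation after relabeling.

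The only genuine work, hence the main obstacle, is the bookkeeping in this second step: one must (i) correctly identify, for each of the three types, which vertex of $L_{i4}$ is \underline{i4} and which is \underline{4i}, and (ii) substitute the column $1,4$ (resp. $2,4$ and $3,4$) minors into the type--dependent vertex formulas before expanding $D$. The Kleene--star inequalities (\ref{eqn:Kleene}) are what allow one to drop absolute values and collapse the maxima, while hypothesis 2 is what guarantees the spurious $\adiff$ factors never vanish. As an independent check on the chirality, the antipodal map $s(x)=-x$ carries the configuration at \underline{4} in $\spann(A)$ to the configuration at \underline{123} in $\spann(A^t)$ while reversing orientation, so $(5.5.5)$ right at \underline{4} corresponds to $(5.5.5)$ left at \underline{123} in $\spann(A^t)$, in agreement with Lemma \ref{lem:123left}.
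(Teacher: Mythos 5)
Your proposal is correct and is exactly the argument the paper intends: the paper states this lemma without proof (the \qed signalling that it is the mirror analogue of Lemma \ref{lem:123right}), and your two steps --- reading the edge directions of $L_{14},L_{24},L_{34}$ at \underline{4} from figure \ref{figure_10} together with the type/edge--direction criterion for the equivalence, then expanding and factoring the classical determinant $D$ and discarding the two nonzero $\adiff$ factors via hypothesis 2 to get each plane equation --- reproduce that proof verbatim with \underline{4} in place of \underline{123}. One harmless slip in your bookkeeping: for $L_{14}$ of type $[12,34]$ the generator \underline{1} lies on ray $r_4$ (its coordinates differ from those of the base of $r_4$ by a multiple of $e_{123}$, by the Kleene inequalities), so under the stated convention that rays $r_i,r_j$ meet at $v^{ij}$ the vertex nearest \underline{1} is the one carrying rays $r_3,r_4$, not $r_1,r_2$ --- but this affects only the naming and not the result, since the plane is spanned by the three points $\underline{1},\underline{14},\underline{41}$ as a set, independently of which vertex receives which label (and the paper's own formulas (\ref{eqn:v^12})--(\ref{eqn:v^24}) carry this same label interchange, so evaluating the formula called $v^{12}$ does return the vertex closest to \underline{1}).
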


\begin{lem}\label{lem:4left}
Assume hypotheses 1 and 2 for $A$.
The following are equivalent:
 \begin{enumerate}
 \item the  point \underline{4} is $(5.5.5)$ left,
\item  $L_{14}$ is of type $[13,24]$, $L_{24}$ is of type $[12,34]$ and $L_{34}$ is of type $[14,23]$.
 \end{enumerate}
 In such a case, the table below shows triples of points and the equations of the classical planes they classically span:
 $$\begin{tabular}{c|cc}
 points&equation\\
 \hline
 \underline{1} \underline{14}  \underline{41}&$X_3-X_1=a_{31}$\\
 \underline{2} \underline{24}  \underline{42}&$X_1-X_2=a_{12}$\\
 \underline{3} \underline{34}  \underline{43}&$X_2-X_3=a_{23}$.&\qed\\
 \end{tabular}$$
\end{lem}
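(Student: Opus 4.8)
The plan is to prove Lemma~\ref{lem:4left} by following exactly the template established in Lemma~\ref{lem:123right}, exploiting the symmetry between the oddly generated point $\underline{123}$ and its opposite $\underline{4}$, and then carrying out the determinant computation. First I would establish the equivalence of the two conditions. By the discussion on p.~\pageref{dfn:ten_points} and the antipodal map $s$ of Lemma~\ref{lem:transpose} (second version), the point $\underline{4}$ in $\spann(A)$ corresponds to $\underline{123}$ in $\spann(A^t)=s(\spann(A))$, and the configuration $(5.5.5)$ left at $\underline{4}$ corresponds to $(5.5.5)$ left at $\underline{123}$ for $A^t$. Reading off the directions of the bounded edges from figure~\ref{figure_04}, the configuration $(5.5.5)$ left at $\underline{4}$ forces the bounded edge of $L_{14}$ to have direction $e_{24}$, of $L_{24}$ direction $e_{34}$, and of $L_{34}$ direction $e_{14}$. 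Invoking the edge-direction criterion stated just before Theorem~\ref{thm:types} (the edge of $L$ has direction $e_{ij}$ iff the type is $[ij,k4]$), these three directions translate precisely into the types $[13,24]$, $[12,34]$ and $[14,23]$ for $L_{14}$, $L_{24}$, $L_{34}$ respectively. This gives the equivalence $(1)\Leftrightarrow(2)$.

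Next I would verify the plane equations. For each triple, say $\underline{1}$, $\underline{14}$, $\underline{41}$, I would first write down the coordinates of the 2-generated extremals. The type of $L_{14}$ is $[13,24]$, so by formulas~(\ref{eqn:v^13}) and~(\ref{eqn:v^24}) (with the appropriate index relabelling, using $p=\col(A,1)$, $q=\col(A,4)$) together with Theorem~\ref{thm:adiffs}, I obtain explicit coordinate vectors for $\underline{14}$ and $\underline{41}$. Then I would form the classical determinant $D(\underline{1},\underline{14},\underline{41})$ as in the proof of Lemma~\ref{lem:123right}: three rows filled with the coordinates of the three points, one row with the indeterminate $(X_1,X_2,X_3)$, and the fourth row all ones. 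Expanding and factoring, I expect the result to be $\pm$ (a product of two adiffs) $\cdot\,(X_3-X_1-a_{31})$, and by hypothesis~2 those adiff factors are nonzero, so the plane equation reduces to $X_3-X_1=a_{31}$. The computations for the planes through $\underline{2}$ and through $\underline{3}$ are entirely analogous, cyclically permuting indices.

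The main obstacle is purely bookkeeping: getting the coordinates of the 2-generated extremals right, since formulas~(\ref{eqn:v^13})--(\ref{eqn:v^24}) were derived for a generic pair $p,q$ and must be specialized to the columns $\col(A,1)$ and $\col(A,4)$ of a Kleene star, and the distinction between which vertex is $\underline{14}$ (closest to $\underline{1}$) versus $\underline{41}$ must be tracked via Theorem~\ref{thm:adiffs}. Rather than repeat the full determinant expansion, I would note that the entire lemma is obtained from Lemma~\ref{lem:4right} by the left--right reflection that swaps the two models of figure~\ref{figure_04}: this reflection interchanges the roles of the two off-diagonal directions in each plane, sending the triple of types $[12,34],[14,23],[13,24]$ to $[13,24],[12,34],[14,23]$ and sending each plane equation $X_b-X_a=a_{ab}$ to its reflected counterpart $X_c-X_a=a_{ac}$. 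Since the determinant identity in Lemma~\ref{lem:4right} has already been checked, the same factorization, with the indices permuted accordingly, yields the three equations $X_3-X_1=a_{31}$, $X_1-X_2=a_{12}$, $X_2-X_3=a_{23}$ claimed here. This reduces the proof to a single symmetry argument plus one representative determinant computation, so I would present only the $\underline{1}\,\underline{14}\,\underline{41}$ case in detail and remark that the others follow by the cyclic symmetry of the configuration.
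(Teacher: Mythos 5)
Your overall strategy coincides with the paper's: the paper only proves Lemma \ref{lem:123right} in full and then states Lemmas \ref{lem:123left}, \ref{lem:4right} and \ref{lem:4left} with the proof omitted as analogous, and your template (read off the directions of the bounded edges of $L_{14}, L_{24}, L_{34}$, convert them to types via the criterion preceding Theorem \ref{thm:types}, then obtain each plane equation by factoring the classical determinant $D$ into a product of two adiffs times the linear form, nonzero by hypothesis 2) is exactly that analogy; the shortcut deriving the lemma from Lemma \ref{lem:4right} by an orientation-reversing relabelling (e.g.\ conjugating by the transposition of indices $1\leftrightarrow 2$, which preserves Kleene stars and both hypotheses, swaps left and right, and permutes types and plane equations precisely as needed) is also sound and is a legitimate economy over redoing the determinant three times.

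One sentence of your first paragraph is, however, wrong and contradicts the paper: you claim that under the antipodal map $s$ of Lemma \ref{lem:transpose} the configuration $(5.5.5)$ \emph{left} at $\underline{4}$ in $\spann(A)$ corresponds to $(5.5.5)$ \emph{left} at $\underline{123}$ in $\spann(A^t)$. Since $s=-\id$ on $\R^3$ is orientation--reversing, it swaps chirality; the paper states this explicitly (``the symmetric image of configuration $(5.5.5)$ left is $(5.5.5)$ right'', and the chirality discussion after the classification). The correct correspondence is left at $\underline{4}$ $\leftrightarrow$ right at $\underline{123}$ for $A^t$. This slip does not sink your proof, because you never actually push the types through that correspondence: the edge directions you then assert are the correct ones (they are what figure \ref{figure_10}, the figure that actually depicts the configurations at $\underline{4}$, shows, so you should cite it rather than figure \ref{figure_04}), and your reflection argument uses a coordinate transposition, not $s$. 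A minor notational point in the same step: $e_{24}, e_{34}, e_{14}$ are not defined in the paper (there is no $e_4$ in $\R^3$); projectively $e_{24}\equiv -e_{13}$, $e_{34}\equiv -e_{12}$, $e_{14}\equiv -e_{23}$, so your directions agree with the types claimed, but the criterion ``edge direction $e_{ij}$ iff type $[ij,k4]$'' is stated only for $\{i,j\}\subset[3]$ and you should phrase the directions accordingly before invoking it.
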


Similarly, looking at figure \ref{figure_07}, we can prove
\begin{itemize}
\item \underline{123} is $(4.6.5)$ if and only if $L_{12}$ is $[13,24]$, $L_{23}$ is $[13,24]$ and $L_{31}$ is $[12,34]$,
\item \underline{123} is $(6.4.5)$ if and only if $L_{12}$ is $[23,14]$, $L_{23}$ is $[12,34]$ and $L_{31}$ is $[23,14]$.
\end{itemize}

Analogous statements can be proved for \underline{123} and \underline{4} with any configuration $(p.q.r)$, with $\{p,q,r\}=\{4,5,6\}$.

\m
By the lemmas and comment above, the types of $L_{12}$, $L_{23}$, $L_{31}$ (resp. $L_{14}$, $L_{24}$, $L_{34}$) determine the configuration at point $\underline{123}$ (resp. $\underline{4}$) and conversely. This leads us to
define the \label{dfn:type--vector} \textbf{type--vector}  $t=(t_1,t_2,t_3)$,  where $t_j$ is the number of tropical lines   of type $[4j,kl]$, with $\{j,k,l\}=\{1,2,3\}$.  By  theorem \ref{thm:types}, both for \underline{123} and \underline{4}, either the three types are  all  different or just two of them are equal.\label{par:lines} Thus, up to a permutation, $t$ equals one of the following
\begin{equation}
(2,2,2),\quad(3,2,1),\quad(3,3,0),\quad(4,1,1),\quad(4,2,0).
\end{equation}

\m
\textbf{Hexagons in $\spann(A)$.}
It is easy to realize that two lines of the same type  (say $[ik,jl]$) having concatenated indices (say $L_{ij}$, $L_{jk}$) yield one hexagon in $\spann(A)$ at extremal $\underline{ijk}$. However, two lines of the same type  having disjoint indices (say $L_{ij}$, $L_{kl}$, with $\{i,j,k,l\}=[4]$) yield no hexagon at all.
And, what happens if two or more hexagons are adjacent facets of $\spann(A)$? Fix a type, e.g. $[ik,jl]$.
\begin{itemize}
\item  Three lines of the same type  having concatenated indices (say $L_{ij}$, $L_{jk}$, $L_{kl}$) yield two adjacent hexagons. The converse is true.
\item  Four lines of the same type  necessarily have concatenated indices (say $L_{ij}$, $L_{jk}$, $L_{kl}$, $L_{li}$) and yield four adjacent hexagons closing up into a cycle. The converse is true.
\end{itemize}

\subsection{Searching for $\spann(A)$ with the combinatorial type of a regular dodecahedron}\label{subsec:dodecahedra}

We seek an order 4 Kleene star  matrix $A$ having f--vector $(20,30,12)$ and polygon--vector $(0,0,12,0)$. We will not find any!

By a translation and a change of coordinates, we can assume that the coordinates of \underline{123} and \underline{4}
are $[0,0,0,0]^t$ and $[-a,-b,-c,0]^t$, respectively, with $0<a\le b\le c$. If $\spann(A)$ must have 12 pentagonal facets, then we can assume that points \underline{123} and \underline{4} are
both $(5.5.5)$; this way  $\spann(A)$ has, at least, 6 pentagonal facets.
Four cases arise, depending on whether the points  \underline{123} and \underline{4} are left or right. These are dealt with in theorems \ref{thm:left_left}, \ref{thm:right_right} and \ref{thm:left_right}.

\begin{thm}\label{thm:left_left}
If  \underline{123} and \underline{4} are both $(5.5.5)$ left,  then polygon--vector of $\spann(A)$ is $(0,3,6,3)$.
\end{thm}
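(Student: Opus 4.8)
The plan is to read off the types of all six tropical lines from the two hypotheses, then use the hexagon--counting rules to pin down $f_6$, and finally force the configurations at the three remaining three--generated extremals. Recall that, by maximality, the twelve facets of $\spann(A)$ are exactly the $4\times 3$ polygons arising from the four tropical triangles cut out by the generators taken three at a time, and that the configuration at an oddly generated extremal records the shapes of the three facets meeting there.

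First I would apply Lemma \ref{lem:123left} to the hypothesis that \underline{123} is $(5.5.5)$ left: this gives $L_{12}$ of type $[13,24]$, $L_{23}$ of type $[12,34]$, and $L_{31}$ of type $[14,23]$. Applying Lemma \ref{lem:4left} to the hypothesis that \underline{4} is $(5.5.5)$ left gives $L_{14}$ of type $[13,24]$, $L_{24}$ of type $[12,34]$, and $L_{34}$ of type $[14,23]$. Hence the six lines fall into exactly three type--classes, each of size two: $\{L_{12},L_{14}\}$ of type $[13,24]$, $\{L_{23},L_{24}\}$ of type $[12,34]$, and $\{L_{31},L_{34}\}$ of type $[14,23]$.

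Next I would count hexagons using the criterion that two lines of the same type with concatenated indices produce one hexagon, while same--type lines with disjoint indices produce none. Each of the three type--classes above consists of two lines sharing exactly one index: $L_{12},L_{14}$ share $1$ (giving a hexagon at \underline{124}), $L_{23},L_{24}$ share $2$ (giving a hexagon at \underline{234}), and $L_{31},L_{34}$ share $3$ (giving a hexagon at \underline{134}). No type--class contains a disjoint--index pair, and no class has three or four lines, so these are all the hexagons; thus $f_6=3$, located at the three--generated extremals \underline{124}, \underline{134}, \underline{234}.

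Finally I would determine the remaining facet shapes. Since \underline{123} is $(5.5.5)$ left by hypothesis, the three facets of the tropical triangle spanned by \underline{1}, \underline{2}, \underline{3} are pentagons, contributing $3$ to $f_5$. Each of \underline{124}, \underline{134}, \underline{234} is an oddly generated extremal, so its configuration is either $(5.5.5)$ or $(p.q.r)$ with $\{p,q,r\}=\{4,5,6\}$ (see p.~\pageref{par:summary}); since a hexagon sits at each of these points, the $(5.5.5)$ option is excluded and each configuration is $(4.5.6)$ in some order, contributing one quadrangle, one pentagon and one hexagon apiece. Summing the four triangles' contributions yields $f_4=3$, $f_5=3+3=6$, $f_6=3$, i.e.\ polygon--vector $(0,3,6,3)$. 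The step requiring most care is the hexagon census: one must verify that the three type--classes are precisely those listed, that each is a concatenated (not disjoint) pair, and that no class has size three or four, since it is exactly this bookkeeping that guarantees $f_6=3$ and thereby forces the $(4.5.6)$ configurations at the three remaining three--generated extremals.
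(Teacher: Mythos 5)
Your proof is correct, and it arrives at the same three hexagons, located at \underline{124}, \underline{134}, \underline{234}, that the paper finds, starting from the same two lemmas \ref{lem:123left} and \ref{lem:4left}; but the machinery you use after that point is genuinely different. The paper does not invoke the hexagon--pair rule or the configuration summary: it uses the second halves of lemmas \ref{lem:123left} and \ref{lem:4left}, namely the tables of classical plane equations, to observe that (for instance) \underline{3}, \underline{31}, \underline{13} and \underline{3}, \underline{34}, \underline{43} lie on one and the same plane $X_2-X_3=a_{23}$, and then checks by direct computation (via lemma \ref{lem:transpose}) that \underline{134} satisfies that equation too, thereby exhibiting each hexagon explicitly as a list of six coplanar extremals; the quadrangle and pentagon count is then left largely implicit. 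You instead use only the type information from the two lemmas, feed it into the combinatorial rule that two same--type lines with concatenated indices produce one hexagon (verifying that all three type--classes are concatenated pairs and none has size three or four), and finish with the classification of possible configurations at oddly generated extremals (p.~\pageref{par:summary}) to force configuration $(p.q.r)$ with $\{p,q,r\}=\{4,5,6\}$ at each of \underline{124}, \underline{134}, \underline{234}. Your route is more purely combinatorial and makes the final count $f_4=3$, $f_5=6$, $f_6=3$ explicit where the paper's ``and so'' is terse; indeed it is essentially the style of argument the paper itself uses later for the classes in its classification list. What it costs is self--containedness: you lean on the hexagon--pair rule, which the paper states only with ``it is easy to realize,'' and on the accounting principle that the twelve facets are exactly the twelve triangle polygons, one per $3$--generated extremal, whereas the paper's coordinate computation substantiates the hexagons from scratch and, as a by--product, records their vertex lists and supporting planes, which are reused in the later examples.
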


\begin{proof}
By lemmas  \ref{lem:123left} and \ref{lem:4left}, the type--vector is $(2,2,2)$ and
\begin{itemize}
\item  points \underline{1}, \underline{14}, \underline{41}, \underline{$12$}, \underline{$21$} lie on the classical plane $X_3-X_1=a_{31}$,
\item  points \underline{2}, \underline{24}, \underline{42}, \underline{$23$}, \underline{$32$} lie on the classical plane $X_1-X_2=a_{12}$,
\item  points \underline{3}, \underline{34}, \underline{43}, \underline{$31$}, \underline{$13$} lie on the classical plane $X_2-X_3=a_{23}$.
\end{itemize}
Moreover, the coordinates of $\underline{134}=\col((-A^t)_0,2)$ also satisfy  the equation $X_2-X_3=a_{23}$, so that \underline{3}, \underline{34}, \underline{43}, \underline{$31$}, \underline{$13$} and \underline{$134$} make up a hexagon.  Similar for  the points \underline{$124$}, \underline{$234$},  and so $\spann(A)$ has three hexagons, three pentagons and three quadrangles.
\end{proof}

\begin{thm}\label{thm:right_right}
If  \underline{123} and \underline{4} are both $(5.5.5)$ right,  then polygon--vector of $\spann(A)$ is $(0,3,6,3)$.\qed
\end{thm}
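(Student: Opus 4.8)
The plan is to exploit the symmetry already established in the excerpt rather than redo the facet count from scratch. Theorem \ref{thm:left_left} shows that when \underline{123} and \underline{4} are both $(5.5.5)$ left, the polygon--vector is $(0,3,6,3)$. The key observation is that the ``right'' configuration is the mirror image (chirality) of the ``left'' configuration, and passing to the mirror image cannot change the number of facets of each shape. Concretely, by Lemma \ref{lem:transpose} (the antipodal/transpose lemma), replacing $A$ by $A^t$ applies the antipodal map $s(x)=-x$ to $\spann(A)$, which carries each oddly generated extremal $\underline{i}$ to $\underline{jkl}$ and reverses left/right configurations. So the first thing I would do is check how conjugating by $A\mapsto A^t$ (or by a suitable coordinate reflection) interchanges the $(5.5.5)$ left and $(5.5.5)$ right configurations at the oddly generated points.

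The main step is to run the argument of Theorem \ref{thm:left_left} in parallel. By Lemmas \ref{lem:123right} and \ref{lem:4right}, assuming \underline{123} and \underline{4} are both $(5.5.5)$ right forces the line types to be $L_{12}$ of type $[14,23]$, $L_{23}$ of type $[24,13]$, $L_{31}$ of type $[34,12]$ (for \underline{123}) and $L_{14}$ of type $[12,34]$, $L_{24}$ of type $[14,23]$, $L_{34}$ of type $[13,24]$ (for \underline{4}). I would first read off the type--vector: counting lines of type $[4j,kl]$, each value $t_j$ equals $2$, so the type--vector is again $(2,2,2)$, exactly as in the left--left case. Then, using the plane equations tabulated in Lemmas \ref{lem:123right} and \ref{lem:4right}, I would group the five coplanar extremals on each of the three classical planes $X_2-X_1=a_{21}$, $X_3-X_2=a_{32}$, $X_1-X_3=a_{13}$.

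Next I would verify that each such quintuple is enlarged to a hexagon by one of the remaining oddly generated vertices $\underline{124}$, $\underline{134}$, $\underline{234}$. As in Theorem \ref{thm:left_left}, this is the crucial coplanarity check: I would compute the coordinates of $\underline{ijk}=\col((-A^t)_0,l)$ via Lemma \ref{lem:transpose} and confirm that, say, $\underline{234}$ lies on the plane $X_2-X_1=a_{21}$ together with \underline{1}, \underline{14}, \underline{41}, \underline{12}, \underline{21}, producing a hexagon; symmetrically for the other two planes. This yields three hexagonal facets. The three pentagons at \underline{123} and \underline{4} contribute, and the three remaining facets are quadrangles, giving polygon--vector $(0,3,6,3)$.

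The step I expect to be the main obstacle is the explicit coplanarity verification: showing that the appropriate three--generated point $\underline{ijk}$ actually lands on the correct classical plane and so completes a hexagon rather than, say, a pentagon. In Theorem \ref{thm:left_left} this is handled by evaluating and factoring a $4\times4$ determinant $D(\cdot,\cdot,\cdot)$ whose nonvanishing reduces to the two adiff factors being nonzero, which holds by hypothesis 2. For the right--right case the same determinant computation applies after permuting indices, so rather than grinding through it I would either invoke the mirror symmetry $A\mapsto A^t$ established via Lemma \ref{lem:transpose} to transport the left--left result directly, or note that the determinant factors identically with the roles of the adiffs permuted. Either route closes the argument and gives the claimed polygon--vector $(0,3,6,3)$.
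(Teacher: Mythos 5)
Your proposal is correct and takes essentially the approach the paper intends: Theorem \ref{thm:right_right} is stated without proof precisely because it is the mirror image of Theorem \ref{thm:left_left}, which is your primary route (transport the left--left result through Lemma \ref{lem:transpose} and the antipodal map, which reverses chirality and preserves the polygon--vector), and your secondary route of rerunning the argument with Lemmas \ref{lem:123right} and \ref{lem:4right} is exactly what the omitted proof would look like, yielding the same type--vector $(2,2,2)$ and three coplanar quintuples. One slip in your illustrative check, which does not affect the main symmetry argument: on the plane $X_2-X_1=a_{21}$ the coplanar quintuple is \underline{1}, \underline{31}, \underline{13}, \underline{14}, \underline{41} (not \underline{12}, \underline{21}), and it is completed to a hexagon by $\underline{134}=\col((-A^t)_0,2)$ rather than by $\underline{234}$, whose coordinates satisfy $X_2-X_1=-a_{12}$ instead.
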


\begin{thm}\label{thm:left_right}
It is not possible to have $A$ satisfying hypotheses 1 and 2 such that \underline{123} is $(5.5.5)$ left and \underline{4} is $(5.5.5)$ right or \underline{123} is $(5.5.5)$ right and \underline{4} is $(5.5.5)$ left.
\end{thm}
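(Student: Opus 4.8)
The plan is to derive a contradiction by combining the type information forced by the two opposite configurations with Theorem~\ref{thm:types}. First I would record what the hypotheses give us. By Lemma~\ref{lem:123left}, if \underline{123} is $(5.5.5)$ left, then $L_{12}$ is of type $[13,24]$, $L_{23}$ is of type $[12,34]$ and $L_{31}$ is of type $[14,23]$. By Lemma~\ref{lem:4right}, if \underline{4} is $(5.5.5)$ right, then $L_{14}$ is of type $[12,34]$, $L_{24}$ is of type $[14,23]$ and $L_{34}$ is of type $[13,24]$. Thus all six tropical lines $L_{ij}$ have been assigned a definite type, and none of them is a tetrapod; I would collect these six assignments into a single list to work with.

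Next I would look for an inconsistency among these six types. The natural place to probe is Theorem~\ref{thm:types}: for a Kleene star, the type of $L_{ij}$ (with $i<j$) is always $[ik,jl]$ or $[il,jk]$, so $i$ and $j$ must be separated by the comma. I would check each of the six assigned types against this rule after rewriting indices in increasing order. For instance $L_{12}$ of type $[13,24]$ is consistent ($1$ and $2$ separated), but I expect that when the left-configuration triple at \underline{123} is put beside the right-configuration triple at \underline{4}, at least one line receives two incompatible type demands, or a type that violates the separation rule of Theorem~\ref{thm:types}. Concretely, the same line $L_{ij}$ may be forced into one type by the configuration at \underline{123} and into a different type by the configuration at \underline{4}; since a line has a unique type, this is the contradiction.

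To make the cross-check systematic, I would use the antipodal symmetry recorded in the remark after Lemma~\ref{lem:transpose}: the map $s(a)=-a$ carries the configuration at an oddly generated point of $\spann(A)$ to the configuration at the opposite oddly generated point of $\spann(A^t)$, and it exchanges left with right. The intended reading is that "left at \underline{123}" and "right at \underline{4}" impose the \emph{same} chirality demand on the shared lines through the symmetry, rather than opposite ones, so the two configurations cannot coexist on a single $\spann(A)$. I would phrase this as: the type data from \underline{123} being left forces one handedness on the three lines $L_{12},L_{23},L_{31}$, while \underline{4} being right forces the opposite handedness on $L_{14},L_{24},L_{34}$, and the edge-direction constraints (an edge of $L_{ij}$ has direction $e_{ij}$ precisely when the type is $[ij,k4]$, as noted on p.~\pageref{dfn:lines}) link the two triples so that no consistent assignment of edge directions around $\spann(A)$ exists.

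The main obstacle I anticipate is exactly this bookkeeping step: translating "left" versus "right" into a precise numerical or directional statement and then showing the two sets of three type-assignments are jointly unsatisfiable. I expect the cleanest route is to compute directly. Using Theorem~\ref{thm:types}, each assigned type pins down which diagonal realizes the relevant $2\times 2$ minor $|A(kl;ij)|_{\trop}$, i.e.\ an inequality among sums of entries $a_{\cdot\cdot}$ of the Kleene star $A$. Writing out the six inequalities coming from the left-at-\underline{123} triple and the six from the right-at-\underline{4} triple, I would look for two that contradict each other, using the Kleene-star inequalities~(\ref{eqn:Kleene}) to eliminate variables. The symmetric case (\underline{123} right, \underline{4} left) follows by applying the antipodal map $s$ and Lemma~\ref{lem:transpose}, which swaps the two scenarios; so only one of the two cases needs the full computation.
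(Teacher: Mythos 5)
Your primary mechanism cannot work, and this is a genuine gap. The two configurations constrain \emph{disjoint} triples of lines: Lemma \ref{lem:123left} speaks only about $L_{12},L_{23},L_{31}$, while Lemma \ref{lem:4right} speaks only about $L_{14},L_{24},L_{34}$, so no line ever receives two conflicting type demands. Moreover, all six assigned types respect the separation rule of Theorem \ref{thm:types} ($i$ and $j$ separated by the comma), and the resulting type--vector is $(2,2,2)$, a perfectly legitimate value (it is the type--vector of class \ref{class:first}). Combinatorially the left--right pattern is, if anything, the most coherent--looking one: the two lines of each type have disjoint index sets, so no hexagons arise, and one gets exactly the would--be dodecahedron with polygon--vector $(0,0,12,0)$. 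Nothing at the level of types, edge directions, or left/right bookkeeping excludes it --- the obstruction is metric, not combinatorial. (You also cannot shortcut via Corollary \ref{cor:no(0,0,12,0)ni(0,1,10,1)}: its proof invokes Theorem \ref{thm:left_right}, so that would be circular.) This is precisely why the paper's own proof abandons combinatorics: it normalizes $\underline{123}=[0,0,0,0]^t$ and $\underline{4}=[-a,-b,-c,0]^t$, writes $A$ in the parametric form (\ref{eqn:3penta_left}), computes all twelve 2--generated extremals, walks along each line $L_{j4}$ from \underline{4} to \underline{j} (equation (\ref{eqn:j})), and extracts inequalities whose combination gives $\gamma_1+\gamma_2+\gamma_3<0$, contradicting $\gamma_j>0$.

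Your fallback plan, however, is the germ of a correct proof --- and of one shorter than the paper's --- provided you fix one expectation: no \emph{two} of the inequalities contradict each other (the paper's own example \ref{ex:AA191} satisfies four of the six simultaneously); the contradiction needs all six at once. Running the computation in the proof of Theorem \ref{thm:types} for each line, the hypotheses ``\underline{123} left'' and ``\underline{4} right'' are equivalent, under hypotheses 1, 2 and the Kleene star inequalities (\ref{eqn:Kleene}), to the six strict inequalities
\begin{align*}
&a_{31}+a_{42} > a_{32}+a_{41}, \qquad
a_{12}+a_{43} > a_{13}+a_{42}, \qquad
a_{23}+a_{41} > a_{21}+a_{43},\\
&a_{21}+a_{34} > a_{24}+a_{31}, \qquad
a_{14}+a_{32} > a_{12}+a_{34}, \qquad
a_{13}+a_{24} > a_{14}+a_{23},
\end{align*}
the first row coming from $L_{12}$ of type $[13,24]$, $L_{23}$ of type $[12,34]$, $L_{31}$ of type $[14,23]$, the second from $L_{14}$ of type $[12,34]$, $L_{24}$ of type $[14,23]$, $L_{34}$ of type $[13,24]$. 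Each of the twelve off--diagonal entries of $A$ occurs exactly once among the left--hand sides and exactly once among the right--hand sides, so adding the six inequalities yields $S>S$ with $S=\sum_{i\neq j}a_{ij}$ --- a contradiction. The case (\underline{123} right, \underline{4} left) simply reverses all six inequalities, and the same summation applies; alternatively use transposition via Lemma \ref{lem:transpose}, as you suggest. So the computational route you sketch last is viable and arguably cleaner than the paper's coordinate--and--path argument; but as submitted your proposal neither commits to it nor executes it, and its leading idea (a type clash on a shared line, or a violation of the separation rule) would never have produced the contradiction.
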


\begin{proof} 
By symmetry, it is enough to address the  case where \underline{123} is $(5.5.5)$ left and \underline{4} is $(5.5.5)$ right.
If \underline{123} is $(5.5.5)$ left then, using lemma \ref{lem:123left}, we have
\begin{equation*}
\underline{12}=\colur{-a_{41}}{-a_{42}}{a_{31}-a_{41}}{0}, \underline{23}=\colur{a_{12}-a_{42}}{-a_{42}}{-a_{43}}{0},
\underline{31}=\colur{-a_{41}}{a_{23}-a_{43}}{-a_{43}}{0},
\end{equation*}
\begin{equation*}
\underline{21}=\colur{a_{32}-a_{42}-a_{31}}{-a_{42}}{a_{32}-a_{42}}{0}, \underline{32}=\colur{a_{13}-a_{43}}{a_{13}-a_{43}-a_{12}}{-a_{43}}{0},
\underline{13}=\colur{-a_{41}}{a_{21}-a_{41}}{a_{21}-a_{41}-a_{23}}{0}.
\end{equation*}

Since \underline{4} is $(5.5.5)$ right then, using lemma \ref{lem:4right}, we have
\begin{equation*}
\underline{14}=\colur{a_{34}-a_{31}}{a_{34}-a_{31}+a_{21}}{a_{34}}{0}, \underline{24}=\colur{a_{14}}{a_{14}-a_{12}}{a_{14}-a_{12}+a_{32}}{0},
\underline{34}=\colur{a_{24}-a_{23}+a_{13}}{a_{24}}{a_{24}-a_{23}}{0},
\end{equation*}
\begin{equation*}
\underline{41}=\colur{a_{24}-a_{21}}{a_{24}}{a_{34}}{0}, \underline{42}=\colur{a_{14}}{a_{34}-a_{32}}{a_{34}}{0},
\underline{43}=\colur{a_{14}}{a_{24}}{a_{14}-a_{13}}{0}.
\end{equation*}

We can assume that the coordinates of \underline{123} are $[0,0,0,0]^t$ and the coordinates of \underline{4} are $[-a,-b,-c,0]^t$, for
some positive $a,b,c$.
Then, by expression (\ref{eqn:3penta_left}),
\begin{equation}\label{eqn:Aleft}
 A=A_0=\left[
 \begin{array}{rrrr}
 0&-\gamma_2-\delta_2&-\gamma_3-\gamma_2-\delta_2&-a\\
 -\gamma_1-\gamma_3-\delta_3&0&-\gamma_3-\delta_3&-b\\
 -\gamma_1-\delta_1&-\gamma_2-\gamma_1-\delta_1&0&-c\\
 0&0&0&0
 \end{array}
 \right].
 \end{equation}

 Substituting $a_{ij}$ by its value in the coordinates of \underline{14}, we get
\begin{equation*}
\underline{14}=\colur{-c+\gamma_1+\delta_1}{-c+\delta_1-\gamma_3-\delta_3}{-c}{0},
\end{equation*}
 so that the vector $\overrightarrow{\underline{14}\ \underline{1}}$ equals $(c-\gamma_1-\delta_1)e_{123}$. Similarly, we see that the vectors $\overrightarrow{\underline{24}\ \underline{2}}$ and $\overrightarrow{\underline{34}\ \underline{3}}$ have direction $e_{123}$.

 We use lemma \ref{lem:4right} and theorem \ref{thm:adiffs} to obtain
\begin{itemize}\label{eqn:distances}
\item $\tdist (\underline{14}, \underline{41})=\adiff_A  (23,14)=|-\gamma_3-\delta_3+\delta_1-c+b|>0$,
\item $\dist (\underline{1}, \underline{14})=\adiff_A  (34,14)=|\gamma_1+\delta_3-c|>0$,
\item $\dist (\underline{4}, \underline{41})=\adiff_A  (12,14)=|\gamma_1+\gamma_3+\delta_3+a-b|>0$,

\item $\tdist (\underline{24}, \underline{42})=\adiff_A (13,24)=|-\gamma_1-\delta_1+\delta_2-a+c|>0$,
\item $\dist (\underline{2}, \underline{24})=\adiff_A (14,24)=|\gamma_2+\delta_2-a|>0$,
\item $\dist (\underline{4}, \underline{42})=\adiff_A (23,24)=|\gamma_1+\delta_1+\gamma_2+b-c|>0$,

\item $\tdist (\underline{34}, \underline{43})=\adiff_A (12,34)=|-\gamma_2-\delta_2+\delta_3-b+a|>0$,
\item $\dist (\underline{3}, \underline{34})=\adiff_A (24,34)=|\gamma_3+\delta_3-b|>0$,
\item $\dist (\underline{4}, \underline{43})=\adiff_A (13,34)=|\gamma_2+\delta_2+\gamma_3+c-a|>0$.
\end{itemize}

For each $j=1,2,3$, it is obvious that
\begin{equation}\label{eqn:j}
\underline{4}+\overrightarrow{\underline{4}\ \underline{4j}}+\overrightarrow{\underline{4j}\ \underline{j4}}+\overrightarrow{\underline{j4}\ \underline{j}}=\underline{j};
\end{equation}
see figure \ref{figure_12}, for $j=1$.
\begin{figure}[h]
 \centering
  \includegraphics[keepaspectratio,width=14cm]{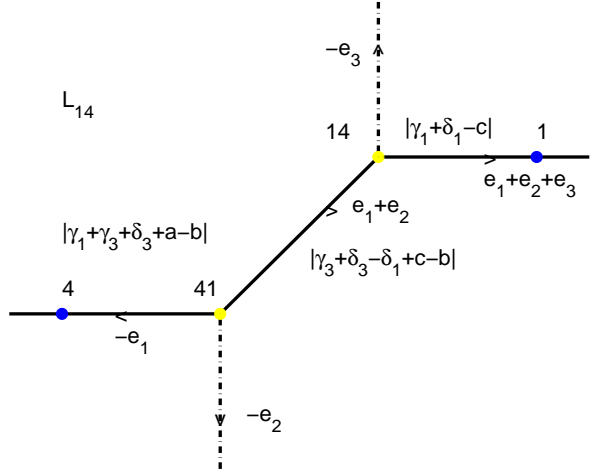}\\
  \caption{Going from point \underline{4} to point \underline{1} along the tropical line $L_{14}$.}
  \label{figure_12}
\end{figure}

Having in mind  tropical distances and the directions of the vectors $e_i$, $e_{ij}$, $e_{123}$, equalities (\ref{eqn:j})
convert into the following ones
\begin{equation}\label{eqn:j1}
\colur
{-a+|\gamma_1+\gamma_3+\delta_3+a-b|+|-\gamma_3-\delta_3+\delta_1-c+b|+|\gamma_1+\delta_1-c|}
{-b+|-\gamma_3-\delta_3+\delta_1-c+b|+|\gamma_1+\delta_1-c|}
{-c+|\gamma_1+\delta_1-c|}
{0}=
\colur
{0}
{-\gamma_1-\gamma_3-\delta_3}
{-\gamma_1-\delta_1}
{0}
\end{equation}
\begin{equation}\label{eqn:j2}
\colur
{-a+|\gamma_2+\delta_1-a|}
{-b+|\gamma_1+\delta_1+\gamma_2+b-c|+|-\gamma_1-\delta_1+\delta_2-a+c|+|\gamma_2+\delta_1-a|}
{-c+|-\gamma_1-\delta_1+\delta_2-a+c|+|\gamma_2+\delta_1-a|}
{0}=
\colur
{-\gamma_2-\delta_2}
{0}
{-\gamma_2-\gamma_1-\delta_1}
{0}
\end{equation}
\begin{equation}\label{eqn:j3}
\colur
{-a+|\gamma_2+\delta_2-\delta_3+b-a|+|\gamma_3+\delta_3-b|}
{-b+|\gamma_3+\delta_3-b|}
{-c+|\gamma_2+\delta_2+\gamma_3+c-a|+|\gamma_2+\delta_2-\delta_3+b-a|+|\gamma_3+\delta_3-b|}
{0}=
\colur
{-\gamma_3-\gamma_2-\delta_2}
{-\gamma_3-\delta_3}
{0}
{0}
\end{equation}
Working through equalities (\ref{eqn:j1}),(\ref{eqn:j2}) and (\ref{eqn:j3}), and using
 that no absolute value vanishes  (due to maximality of $A$), it follows that
\begin{align*}
\gamma_1+\delta_1&<c,\\
\gamma_2+\delta_2&<a,\\
\gamma_3+\delta_3&<b,\\
\gamma_1+\delta_1-\delta_2&<c-a,\\
\gamma_2+\delta_2-\delta_3&<a-b,\\
\gamma_3+\delta_3-\delta_1&<b-c,
\end{align*}
but then $\gamma_1+\gamma_2+\gamma_3<0$, contradicting $\gamma_j>0$, for all $j=1,2,3$.

 \end{proof}

 \subsection{Classification}

For $\spann(A)$ convex and maximal, here is a list of \textbf{additional properties} it enjoys (with a short explanation  provided):
\begin{enumerate}
\item  each facet of $\spann(A)$ contains exactly one generator, exactly one 3--generated  extremal and 1, 2 or 3 2--generated extremals, (this is true for facets meeting  $\underline{123}$ in $\spann(A)$, thus it is true for facets meeting  $\underline{4}$ in $\spann(A^t)$, 
    thus it is true for facets  meeting any oddly generated extremal, which are all of the facets),\label{caso:a}
\item at an oddly generated extremal no two  hexagons meet, (same reasons as in item above),\label{caso:c}

\item  $f_4=f_6$, i.e., the number of quadrangles in $\spann(A)$ equals the number of hexagons, (this is true because the f--vector of $\spann(A)$ is $(20,30,12)$, the same f--vector  the regular dodecahedron $\cal D$ has. Since $\spann(A)$ is trivalent (by  \cite{Joswig_K}), then the combinatorial type of $\spann(A)$ can be obtained from $\cal D$ by a finite sequence of  combinatorial polyhedral transformations; see \cite{Diaz})\label{caso:g}.
\end{enumerate}

\begin{cor}\label{cor:no(0,0,12,0)ni(0,1,10,1)}
It is not possible to have $A$ 
with f--vector  $(20,30,12)$ and  polygon--vector  either $(0,0,12,0)$ or $(0,1,10,1)$. In particular, $\spann(A)$ does not have the combinatorial type of the regular dodecahedron $\cal D$, for any $A$.
\end{cor}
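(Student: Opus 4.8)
The plan is to reduce both forbidden polygon--vectors to the already established theorems \ref{thm:left_left}, \ref{thm:right_right} and \ref{thm:left_right}, which together say that whenever \underline{123} and \underline{4} are both $(5.5.5)$ the polygon--vector is forced to be $(0,3,6,3)$ (the left--right case being impossible). Since the labelling of the four generators is arbitrary and hypotheses 1 and 2 are invariant under a simultaneous permutation of the rows and columns of $A$, these three theorems apply verbatim to any opposite pair $(\underline{i},\underline{jkl})$ with $\{i,j,k,l\}=[4]$. Hence it suffices to prove that, under either target polygon--vector, some opposite pair of oddly generated extremals consists of two points both of type $(5.5.5)$: this immediately yields polygon--vector $(0,3,6,3)$, contradicting both $(0,0,12,0)$ and $(0,1,10,1)$.

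First I would record the only configurations available at an oddly generated extremal. By p.~\pageref{rem:antipodal} together with the summary in p.~\pageref{par:summary}, each of the eight oddly generated extremals (the four generators and the four $3$--generated points) carries either a $(5.5.5)$ configuration or a $(p.q.r)$ configuration with $\{p,q,r\}=\{4,5,6\}$; in the first case its three facets contain no hexagon, in the second exactly one. Combining this with additional property \ref{caso:a} (each facet of $\spann(A)$ contains exactly one generator and exactly one $3$--generated extremal), the $f_6$ hexagons are in bijection with the generators whose configuration is of type $(p.q.r)$, and likewise with the $3$--generated extremals whose configuration is of type $(p.q.r)$. Thus the number of generators of type $(p.q.r)$ equals $f_6$, and independently the same holds for the $3$--generated extremals.

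Now I would run a pigeonhole argument on the four opposite pairs $(\underline{1},\underline{234})$, $(\underline{2},\underline{134})$, $(\underline{3},\underline{124})$, $(\underline{4},\underline{123})$. For polygon--vector $(0,0,12,0)$ we have $f_6=0$, so every oddly generated extremal is $(5.5.5)$ and all four pairs qualify. For $(0,1,10,1)$ we have $f_6=1$, so by the previous count at most one generator and at most one $3$--generated extremal fail to be $(5.5.5)$; each such exceptional point lies in exactly one opposite pair, so at most two of the four pairs are affected, leaving at least two pairs whose members are both $(5.5.5)$. In either case a suitable opposite pair exists, the theorems above (after relabelling so that the pair becomes $(\underline{4},\underline{123})$) force the polygon--vector to be $(0,3,6,3)$, and we reach a contradiction. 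Since the regular dodecahedron $\cal D$ has polygon--vector $(0,0,12,0)$, ruling this vector out shows in particular that $\spann(A)$ never has the combinatorial type of $\cal D$.

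The main obstacle is the bookkeeping that converts the global datum $f_6\le 1$ into the local statement that an entire opposite pair is simultaneously of type $(5.5.5)$; the crux is the double count identifying $f_6$ both with the number of type--$(p.q.r)$ configurations among generators and, separately, among $3$--generated extremals, which rests essentially on property \ref{caso:a} and on the fact that the only admissible configurations are $(5.5.5)$ and $(p.q.r)$ with $\{p,q,r\}=\{4,5,6\}$. Once this count is in place the pigeonhole is routine, and the only remaining care is to justify invoking \ref{thm:left_left}--\ref{thm:left_right} for an arbitrary opposite pair by relabelling the generators.
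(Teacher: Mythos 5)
Your proof is correct, and it ends with the same reduction the paper uses: exhibit an opposite pair of oddly generated extremals both of configuration $(5.5.5)$, then invoke theorems \ref{thm:left_left}, \ref{thm:right_right} and \ref{thm:left_right} to force the polygon--vector $(0,3,6,3)$ (or an impossibility), contradicting $f_5\ge 10$. Where you genuinely differ is in how that pair is produced. The paper argues by cases on whether all facets are pentagons, or a quadrangle and a hexagon are adjacent, or non--adjacent, and in the last case infers the pair from the raw count that only $4+6=10$ extremals touch the quadrangle or the hexagon. You instead set up a double count: by property \ref{caso:a} the hexagons are in bijection with the generators of configuration $(p.q.r)$ and, separately, with the $3$--generated extremals of configuration $(p.q.r)$, so $f_6\le 1$ means at most two of the four opposite pairs contain a non--$(5.5.5)$ point, and pigeonhole finishes. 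Your version is tighter precisely where the paper is terse: the raw count of remaining extremals does not by itself exclude the arrangement in which the exceptional points spread over all four opposite pairs (say the quadrangle carries $\underline{1}$ and $\underline{123}$ while the hexagon carries $\underline{2}$ and $\underline{124}$; then no opposite pair consists of two $(5.5.5)$ points), whereas your bijection rules this out --- indeed it shows the non--adjacent case cannot occur at all, since the generator on the quadrangle, being of type $(p.q.r)$, would have to lie on the unique hexagon, which already carries a different unique generator. As side benefits, your count re--derives $f_4=f_6$ independently of the paper's item \ref{caso:g}, and you make explicit the relabelling argument (invariance of hypotheses 1 and 2 under simultaneous row--column permutations) needed to apply the three theorems to an arbitrary opposite pair, a point the paper leaves implicit.
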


\begin{proof}
Assume that $A$ satisfies hypotheses 1 and 2, has f--vector $(20,30,12)$, $f_4=f_6$ and  $f_5\ge10$.
There are three cases:
\begin{itemize}
\item all the facets of $\spann(A)$ are pentagons,i.e, $f_5=12$,
\item a quadrangle and a hexagon are adjacent facets in $\spann(A)$,
\item a quadrangle and a hexagon are non--adjacent facets in $\spann(A)$.
\end{itemize}
In the third case, $6+4=10$ extremals  of $\spann(A)$ meet either a quadrangle or a hexagon  and $f_5\ge10$, so that the remaining 10 extremals  in $\spann(A)$ must have configuration $(5.5.5)$. Then,  there must exist two opposite extremals (recall definition in p. \pageref{dfn:opposite})   of $\spann(A)$ both of which are $(5.5.5)$. But then, theorems \ref{thm:left_left}, \ref{thm:right_right} and \ref{thm:left_right} tell us that this cannot happen.
 In the first two cases, the existence of two opposite extremals having configuration $(5.5.5)$ is even more obvious.
\end{proof}

By corollary \ref{cor:no(0,0,12,0)ni(0,1,10,1)} and item \ref{caso:a}, we have $2\le f_6\le 4$. \label{caso:f}
Thus, the polygon--vector $(0,f_4,f_5,f_6)$ of $\spann(A)$ is
\begin{equation}
(0,2,8,2),\qquad (0,3,6,3),\qquad (0,4,4,4).
\end{equation}

Our classification goes according to the type--vector (see p. \pageref{dfn:type--vector}) and number and adjacency of hexagons (the polygon--vector is not enough to classify!).
Up to symmetry and changes of coordinates, the combinatorial type of $\spann(A)$ is classified as follows:

\setcounter{enumi}{0}

\begin{list}{Class \theenumi.}{\usecounter{enumi}}
\item \label{class:first} If $t=(2,2,2)$, then by  theorems \ref{thm:adiffs} and \ref{thm:left_right}, the points $\underline{123}$ and $\underline{4}$ are both $(5.5.5)$ left or both $(5.5.5)$ right. 
    In any case, the polygon--vector is $(0,3,6,3)$, by theorems \ref{thm:left_left}   and \ref{thm:right_right}. No pairs of adjacent hexagons exist in $\spann(A)$, since the indices of lines corresponding to any given type are disjoint. For examples, see lemma \ref{lem:gamma_delta_c}.
\item  \label{class:second} If $t=(3,2,1)$ and the indices  of the lines of type $[13,24]$ are concatenated, then $\spann(A)$ has $2+1=3$ hexagons, so that the polygon--vector is $(0,3,6,3)$. Two hexagons are adjacent. See example \ref{ex:AA191}.
\item  \label{class:third} If $t=(3,2,1)$ and the indices  of the lines of type $[13,24]$ are disjoint, then $\spann(A)$ has 2 hexagons, which  are adjacent. The polygon--vector is $(0,2,8,2)$. See example \ref{ex:B15_y_B12}.
\item  \label{class:fourth} If $t=(3,3,0)$, then   $\spann(A)$ has 2 pairs of adjacent hexagons. The polygon--vector is $(0,4,4,4)$. For examples, see lemma \ref{lem:circulant}.
\item  \label{class:fifth} If $t=(4,1,1)$, then   $\spann(A)$ has a cycle of 4 adjacent hexagons. The polygon--vector is $(0,4,4,4)$.  For examples, see lemma \ref{lem:circulant}. In this case, the configurations at $\underline{123}$ and $\underline{4}$ are not equal: one is $(p.q.r)$ and the other is $(r.q.p)$, for some $\{p,q,r\}=\{4,5,6\}$.
\item \label{class:last}  If $t=(4,2,0)$, then   $\spann(A)$ has a cycle of 4 adjacent hexagons. The indices  of the two lines of type $[13,24]$ are disjoint. The polygon--vector is $(0,4,4,4)$. In this case, the configurations at $\underline{123}$ and $\underline{4}$ are equal to $(p.q.r)$, for some $\{p,q,r\}=\{4,5,6\}$. For examples, see lemma \ref{lem:circulant}.
\end{list}
\textbf{Symmetry and chirality.}
The symmetric image of configuration $(5.5.5)$ left is $(5.5.5)$ right. The symmetric image of configuration $(p.q.r)$  is $(p.q.r)$.
Thus, $\spann(A)$ admits a central  symmetry  only  for class \ref{class:last}.

If in  $\spann(A)$, the points \underline{123} and \underline{4} are both $(5.5.5)$ right, then in $\spann(A^t)$, the points \underline{123} and \underline{4} are both $(5.5.5)$ left and, thus, the solids $\spann(A), \spann(A^t)$ are chiral to each other. This happens in class \ref{class:first}.

\subsection{Compatible configurations at \underline{123} and at \underline{4} and examples}\label{subsec:examples}

Fix an order 4 Kleene star  $A$.
As we saw in the proof of theorem \ref{thm:types}, the type of the line $L_{ij}$ depends on the value attained by the tropical minor $M(ij):=|A(kl,ij)|_{\trop}$, where $\{i,j,k,l\}=[4]$. More explicitly,
\begin{equation}\label{eqn:12}
M(12)=\max\{a_{31}+a_{42},a_{41}+a_{32}\}=|A(34,12)|_{\trop},
\end{equation}
\begin{equation}\label{eqn:13}
M(13)=\max\{a_{21}+a_{43},a_{41}+a_{23}\}=|A(24,13)|_{\trop},
\end{equation}
\begin{equation}\label{eqn:14}
M(14)=\max\{a_{21}+a_{34},a_{31}+a_{24}\}=|A(23,14)|_{\trop},
\end{equation}
\begin{equation}\label{eqn:23}
M(23)=\max\{a_{12}+a_{43},a_{42}+a_{13}\}=|A(14,23)|_{\trop},
\end{equation}
\begin{equation}\label{eqn:24}
M(24)=\max\{a_{12}+a_{34},a_{32}+a_{14}\}=|A(13,24)|_{\trop},
\end{equation}
\begin{equation}\label{eqn:34}
M(34)=\max\{a_{13}+a_{24},a_{23}+a_{14}\}=|A(12,34)|_{\trop}.
\end{equation}

\begin{figure}[h]
 \centering
  \includegraphics[keepaspectratio,width=14cm]{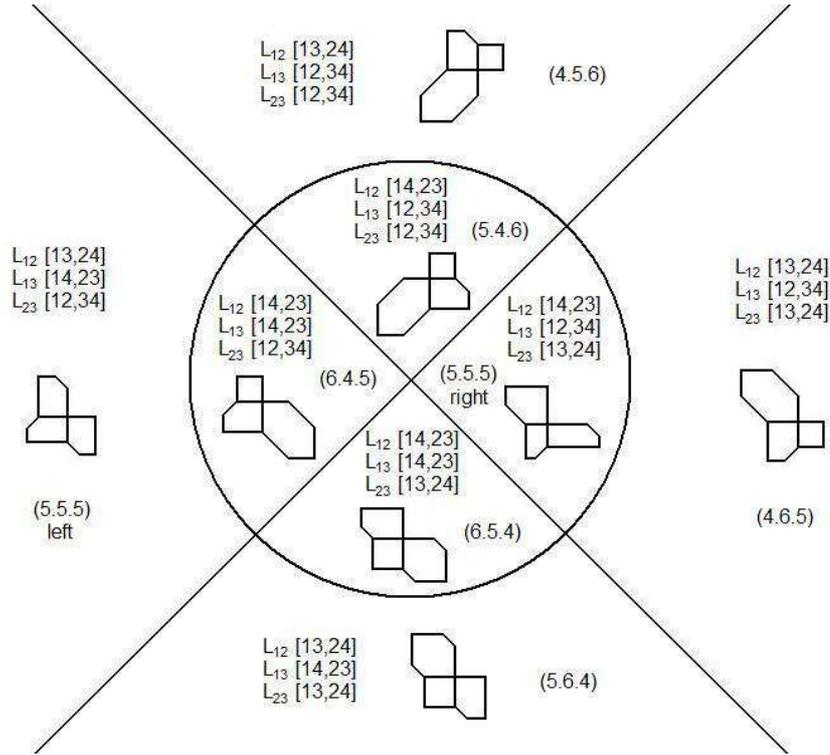}\\
  \caption{Possible configurations at point \underline{123}. The circle and the two straight lines  represent the classical hyperplanes $a_{31}+a_{42}=a_{41}+a_{32}$, $a_{21}+a_{43}=a_{41}+a_{23}$ and $a_{12}+a_{43}=a_{42}+a_{13}$.}
  \label{figure_13}
\end{figure}
\begin{figure}[h]
 \centering
  \includegraphics[keepaspectratio,width=14cm]{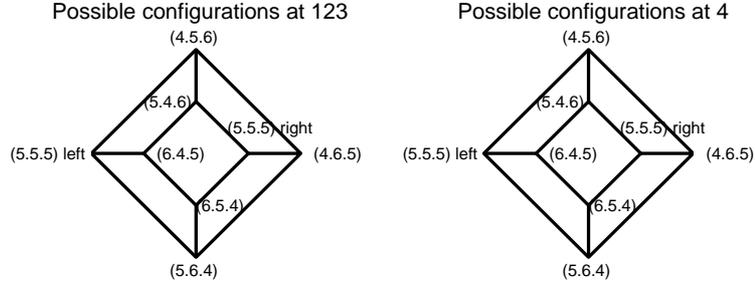}\\
  \caption{Dual graphs of cell decompositions at $\underline{123}$ and at $\underline{4}$.}
  \label{figure_14}
\end{figure}

By inequalities (\ref{eqn:Kleene}), the order 4 Kleene stars  form  a closed convex subset  $\overline{\SSS}$ of $\R_{\le0}^{12}$. Those matrices satisfying hypotheses 1 and 2 form an open dense subset $\SSS$ in $\overline{\SSS}$. For any $A\in\SSS$, the combinatorial type of $\spann(A)$ changes whenever the value of a maximum among (\ref{eqn:12})--(\ref{eqn:34}) changes. Therefore, the following family of hyperplanes  splits $\SSS$ into open cells:
\begin{eqnarray*}
a_{31}+a_{42}&=&a_{41}+a_{32}\\
a_{21}+a_{43}&=&a_{41}+a_{23}\\
a_{21}+a_{34}&=&a_{31}+a_{24}\\
a_{12}+a_{43}&=&a_{42}+a_{13}\\
a_{12}+a_{34}&=&a_{32}+a_{14}\\
a_{13}+a_{24}&=&a_{23}+a_{14}.
\end{eqnarray*}

The situation around point \underline{123} is depicted in figure \ref{figure_13}.  The situation around point \underline{4} is similar. The dual graphs of both cell decompositions appear in figure \ref{figure_14}; each node of the graph on the left (resp. right) corresponds to a possible configuration at $\underline{123}$ (resp. at $\underline{4}$).

We say that  given configurations at $\underline{123}$ and  at $\underline{4}$ are \textbf{compatible}  if there exists  $A\in\SSS$ such that $\spann(A)$ realizes both. To obtain examples of some  combinatorial types of $\spann(A)$, it is enough to find certain compatible pairs of configurations at $\underline{123}$ and  at $\underline{4}$. Theorems \ref{thm:left_left} and \ref{thm:right_right} show two compatible configurations (symmetric one to another), while theorem \ref{thm:left_right} shows some incompatible configurations. 

For each class we can always find examples with integer matrices (due to translations and scaling). Then, the tropical distance between neighboring extremals is the integer length  of the edge they span.

\m
Revisiting  theorems \ref{thm:left_left} and \ref{thm:right_right}, we find matrices $A\in\SSS$ with type--vector $(2,2,2)$, thus in class \ref{class:first}. The following provides simple examples: e.g., take $\gamma=\delta=1$, $c=2$.

\begin{lem}\label{lem:gamma_delta_c}
Suppose that $\gamma,\delta,c$ are positive reals  such that $2\gamma+\delta<2c$. Then
\begin{equation}\label{eqn:gd}
A=\left[\begin{array}{rrrr}
0&-2\gamma-\delta&-\gamma-\delta&-c\\
 -\gamma-\delta&0&-2\gamma-\delta&-c\\
 -2\gamma-\delta&-\gamma-\delta&0&-c\\
 -c&-c&-c&0\\
 \end{array}\right]
\end{equation}
 $\spann(A)$ belongs to class \ref{class:first}.
\end{lem}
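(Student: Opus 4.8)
The plan is to confirm that $A$ is an order $4$ Kleene star satisfying hypotheses 1 and 2, and then to compute its type--vector and show it equals $(2,2,2)$; by the definition of class \ref{class:first} this finishes the argument. The whole computation is cut down by a factor of three once one observes that $A$ is invariant under the cyclic permutation $\sigma=(1\,2\,3)$ fixing $4$, that is $a_{\sigma(i)\sigma(j)}=a_{ij}$ for all $i,j$. Consequently $L_{12},L_{23},L_{31}$ form one $\sigma$--orbit and $L_{14},L_{24},L_{34}$ form another, so it suffices to determine the type of one representative in each orbit and transport it by $\sigma$. Throughout I would write $u=\gamma+\delta$ and $w=2\gamma+\delta$, so that $w-u=\gamma>0$, $2u-w=\delta>0$, and the hypothesis reads $w<2c$.

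First I would verify that $A$ is a Kleene star. The diagonal vanishes by construction, so by (\ref{eqn:Kleene}) it remains to check $a_{ik}+a_{kj}\le a_{ij}$ for distinct indices. Up to the $\sigma$--symmetry these reduce to a short list; the only binding one is $a_{14}+a_{42}\le a_{12}$, i.e. $-2c\le -w$, which is exactly the hypothesis $2\gamma+\delta<2c$. Every remaining inequality reduces to $\gamma>0$, $\delta>0$, or a weaker consequence, and holds strictly. For hypothesis 1, every off--diagonal entry of $A$ is negative, so $|A|_{\trop}=0$ is attained only at the identity permutation; thus $A$ is tropically regular and its four columns are tropically non--coplanar. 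For hypothesis 2, I would compute the adiff of each $2\times2$ minor involving at least three indices; by the $\sigma$--symmetry only a few orbit representatives need to be evaluated, and each adiff turns out to be one of $\gamma,\ \delta,\ u,\ w,\ 2w-u,\ 2c-u,\ 2c-w$, all strictly positive under the hypothesis. Hence no such minor is tropically singular, and by lemma \ref{lem:maximal} the matrix $A$ is maximal.

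Next I would read off the line types. For $L_{12}$ I evaluate $M(12)=|A(34;12)|_{\trop}=\max\{-w-c,\,-u-c\}=-u-c$, attained only on the secondary diagonal; by the proof of theorem \ref{thm:types} this makes $L_{12}$ of type $[14,23]$, whence applying $\sigma$ gives $L_{23}$ of type $[24,13]$ and $L_{31}$ of type $[34,12]$. By lemma \ref{lem:123right} the point \underline{123} is $(5.5.5)$ right. For $L_{14}$ the Pl\"ucker data of $\col(A,1)$ and $\col(A,4)$ give $m_{12}+m_{34}=-c-w<m_{13}+m_{24}=m_{14}+m_{23}=-c-u$, so $L_{14}$ is of type $[12,34]$; applying $\sigma$ yields $L_{24}$ of type $[14,23]$ and $L_{34}$ of type $[13,24]$, and by lemma \ref{lem:4right} the point \underline{4} is $(5.5.5)$ right.

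Finally, counting the lines of type $[4j,kl]$ among the six computed types gives $t_1=t_2=t_3=2$, so the type--vector is $(2,2,2)$ and $\spann(A)$ lies in class \ref{class:first}; since both oddly generated extremals are $(5.5.5)$ right, theorem \ref{thm:right_right} yields polygon--vector $(0,3,6,3)$. I do not expect a genuine obstacle here: the argument is essentially bookkeeping, and the only place demanding care is matching the minors $M(ij)$ to the $[ik,jl]$--versus--$[il,jk]$ convention of lemmas \ref{lem:123right} and \ref{lem:4right}. The cyclic symmetry $\sigma$ is precisely what keeps that bookkeeping, and the verification of hypotheses 1 and 2, within manageable bounds.
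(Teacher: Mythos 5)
Your proposal is correct and takes essentially the same route as the paper's proof: verify the Kleene--star inequalities (\ref{eqn:Kleene}), determine the types of the six lines $L_{ij}$, conclude that \underline{123} and \underline{4} are both $(5.5.5)$ right, and invoke theorem \ref{thm:right_right}. The differences are only organizational refinements: you exploit the cyclic symmetry $(1\,2\,3)$ and check hypotheses 1 and 2 explicitly, whereas the paper reads off the configuration at \underline{123} by matching $A$ against expression (\ref{eqn:3penta_right}) and states all six line types as direct computations.
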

\begin{proof}
The matrix   $A$ is normal and satisfies inequalities (\ref{eqn:Kleene}), so that it is tropically idempotent. By expression (\ref{eqn:3penta_right}), the point \underline{123} is $(5.5.5)$ right, with
$\gamma_j=\gamma$, $\delta_j=\delta$, for $j=1,2,3$.

Direct computations yield that lines $L_{13}$ and $L_{14}$  are of type $[12,34]$, lines $L_{23}$ and $L_{34}$  are of type $[13,24]$ and lines $L_{12}$ and $L_{24}$  are of type $[14,23]$. This implies that the point \underline{4} is $(5.5.5)$ right, and then the result  follows from theorem \ref{thm:right_right}.
\end{proof}

We can make additional computations for matrix (\ref{eqn:gd}). Lemma \ref{lem:transpose} provides the coordinates of the points \underline{$ij4$}. We compute  the coordinates of the  points below, obtaining
\begin{equation}
\underline{14}=\colur{2\gamma+\delta-c}{\gamma-c}{-c}{0}, \quad \underline{41}=\colur{\gamma+\delta-c}{-c}{-c}{0},
\end{equation}
\begin{equation}
\underline{24}=\colur{-c}{2\gamma+\delta-c}{\gamma-c}{0}, \quad \underline{42}=\colur{-c}{\gamma+\delta-c}{-c}{0},
\end{equation}
\begin{equation}
\underline{34}=\colur{\gamma-c}{-c}{2\gamma+\delta-c}{0}, \quad \underline{43}=\colur{-c}{-c}{\gamma+\delta-c}{0},
\end{equation}
showing that \underline{4}, \underline{234}, \underline{24}, \underline{42}, \underline{43} lie on $X_1=-c$, \underline{4}, \underline{134}, \underline{34}, \underline{43}, \underline{41} lie on $X_2=-c$ and \underline{4}, \underline{124}, \underline{14}, \underline{41}, \underline{42} lie on $X_3=-c$.

\m

Given $p=[p_1,p_2,p_3,p_4]^t$, we consider the corresponding \textbf{circulant} and \textbf{anticirculant symmetric} matrices
\begin{equation}
C(p)=\left[\begin{array}{rrrr}
p_1&p_4&p_3&p_2\\
p_2&p_1&p_4&p_3\\
p_3&p_2&p_1&p_4\\
p_4&p_3&p_2&p_1
\end{array}\right],\qquad A(p)=\left[\begin{array}{rrrr}
p_1&p_2&p_3&p_4\\
p_2&p_1&p_4&p_3\\
p_3&p_4&p_1&p_2\\
p_4&p_3&p_2&p_1
\end{array}\right].
\end{equation}


In the lemma below, take for instance, $a=3, c=6$ and  $b=4$ or $b=5$.

\begin{lem}\label{lem:circulant}
Consider  numbers  $a,b,c$ such that $0<a<b<c\le 2a$ and $2b\neq a+c$ and set $p=[0,-a,-b,-c]^t$. Then
\begin{enumerate}
\item $\spann C(p)$   belongs to classes \ref{class:fourth} or \ref{class:fifth},\label{ite:uno}
\item $\spann A(p)$ belongs to the class \ref{class:last}.\label{ite:dos}
\end{enumerate}
\end{lem}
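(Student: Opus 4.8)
The plan is to compute, for each of the two matrices, the types of the six tropical lines $L_{ij}$, read off the type--vector, and match it against the classification list. Write $C=C(p)$ and $A=A(p)$. Since $p=[0,-a,-b,-c]^t$, the entry of $C$ in position $(i,j)$ depends only on $(i-j)\bmod 4$, taking the values $0,-a,-b,-c$ for offsets $0,1,2,3$; and $A$ is symmetric, with off--diagonal value $-a$ on the index pairs $\{1,2\},\{3,4\}$, value $-b$ on $\{1,3\},\{2,4\}$ and value $-c$ on $\{1,4\},\{2,3\}$. First I would check that $C$ and $A$ are Kleene stars. Writing $w(0)=0,\ w(1)=-a,\ w(2)=-b,\ w(3)=-c$, the Kleene inequalities \eqref{eqn:Kleene} for the circulant $C$ read $w(s+t)\ge w(s)+w(t)$ for all $s,t\in\Z/4$, whose only nonvacuous instances are $w(2)\ge 2w(1)$ and $w(3)\ge w(1)+w(2)$, i.e. $b\le 2a$ and $c\le a+b$; both follow from $a<b<c\le 2a$. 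For the symmetric $A$, \eqref{eqn:Kleene} is just the triangle inequality $d_{ij}\le d_{ik}+d_{kj}$ on each of the four triples in $[4]$, and every triple carries the three side lengths $\{a,b,c\}$, so the only nontrivial inequality is $c\le a+b$, again implied by $c\le 2a$ and $a<b$.

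Next I would verify hypotheses 1 and 2. Hypothesis 1 holds because the tropical determinant of each matrix equals $0$ and is attained only at the identity permutation (all off--diagonal entries being strictly negative), so $C$ and $A$ are tropically regular and, by \cite{Richter}, their columns are tropically non--coplanar. For hypothesis 2 one must see that the thirty $2\times2$ minors with at least three distinct indices are tropically regular; each such minor compares two sums drawn from $\{2a,2b,2c,a+b,a+c,b+c\}$ (possibly involving a zero diagonal entry), and under $0<a<b<c\le 2a$ together with $2b\neq a+c$ no two compared sums coincide. The cyclic symmetry of $C$ and the symmetry of $A$ reduce this to a handful of orbit representatives. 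By Lemma \ref{lem:maximal}, both matrices are then maximal, so the classification applies.

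It remains to compute the line types via Theorem \ref{thm:types}, using that the minor $M(ij)=|A(kl;ij)|_{\trop}$ of \eqref{eqn:12}--\eqref{eqn:34} yields type $[ik,jl]$ when its main diagonal dominates and $[il,jk]$ when its secondary diagonal dominates. For $C$ the minors $M(13),M(24)$ compare $2a$ with $2c$ (fixed by $a<c$), while $M(12),M(14),M(23),M(34)$ compare $2b$ with $a+c$; hence the type--vector is $(4,1,1)$ when $2b<a+c$ and $(3,3,0)$ when $2b>a+c$. In the first case the four equal--type lines are $L_{12},L_{23},L_{34},L_{14}$, whose indices concatenate into the cycle $1\!-\!2\!-\!3\!-\!4\!-\!1$, giving a ring of four adjacent hexagons (class \ref{class:fifth}); in the second case the two triples of equal--type lines concatenate into two paths, giving two pairs of adjacent hexagons (class \ref{class:fourth}). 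For $A$ every minor compares two of $2a,2b,2c$, so the outcomes are fixed by $a<b<c$ and the type--vector is $(4,2,0)$: the four lines $L_{13},L_{14},L_{23},L_{24}$ all have type $[12,34]$ and concatenate into the cycle $1\!-\!3\!-\!2\!-\!4\!-\!1$, while the remaining two lines $L_{12},L_{34}$ (type $[13,24]$) have disjoint indices; since $A$ is symmetric, $\spann A$ is centrally symmetric by Lemma \ref{lem:transpose}, so the configurations at $\underline{123}$ and $\underline{4}$ agree, placing $\spann A$ in class \ref{class:last}.

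The main obstacle is bookkeeping rather than depth: one must apply the main/secondary--diagonal dictionary consistently across all six lines (the sign conventions are easy to flip) and then match the resulting family of equal--type lines to the precise adjacency pattern --- two paths versus a four--cycle --- demanded by each class. The exhaustive regularity check behind hypothesis 2 is routine but voluminous, and it is the cyclic and dihedral symmetries of $C$ and $A$ that keep it tractable.
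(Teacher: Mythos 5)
Your proposal is correct and follows essentially the same route as the paper: check the Kleene--star and maximality hypotheses, then read the types of the six lines $L_{ij}$ off the $2\times 2$ minors $M(ij)$ and match the resulting type--vectors --- $(4,1,1)$ or $(3,3,0)$ for $C(p)$ according to the sign of $2b-(a+c)$, and $(4,2,0)$ for $A(p)$ --- against the classification. The only departures are minor: you verify hypothesis 1 via tropical regularity of the full $4\times 4$ determinant (attained only at the identity permutation) where the paper instead plugs the coordinates of \underline{4} into the tropical plane spanned by the first three columns, and you treat item 2 explicitly whereas the paper proves only item 1 and declares the other case similar.
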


\begin{proof} The proofs of both items are similar. We will only prove item \ref{ite:uno}.
The hypothesis on $a,b,c$ guarantee that $C(p)$ is a Kleene star and all the
 $2\times 2$ minors of $C(p)$ are tropically regular. Thus, $C(p)$ satisfies hypothesis 2.
 Concerning hypothesis 1, notice that the vertex of $\Pi:=L(\underline{1},\underline{2},\underline{3})$ is $\col(-C(p)^t,4)=[c,b,a,0]^t$, so that the tropical linear form corresponding to $\Pi$ is
\begin{equation*}
-c\odot X_1\oplus -b\odot X_2\oplus -a\odot X_3\oplus 0\odot X_4=\max\{X_1-c, X_2-b,  X_3-a, X_4\}
\end{equation*}
and, plugging in the coordinates of \underline{4}, we get that the maximum
\begin{equation*}
\max\{-a-c, -2b,  -a-c, 0\}=0
\end{equation*}
is attained only once, showing that the columns of $C(p)$ are not coplanar, tropically.

The type of  $L_{13}$ is $[12,34]$ and  the type of  $L_{24}$ is $[14,23]$.
The remaining computations depend on
$$M=\max\{-2b,-a-c\},$$ the value of  any of the $2\times 2$ tropical minors $A(ij,kl)$ with four different indices.

If $M=-2b$, then the type of lines $L_{12}$, $L_{14}$  $L_{23}$ and $L_{34}$ is $[13,24]$. Thus, the type--vector is $(1,4,1)$.
Otherwise, $M=-a-c$ and the type of lines $L_{12}$ and  $L_{34}$ is $[14,23]$, while the type of lines $L_{14}$ and  $L_{23}$ is $[12,34]$. The type--vector is $(3,0,3)$, now. In any case, the polygon--vector is $(0,4,4,4)$.
\end{proof}

Here are more computations for $\spann C(p)$: the coordinates of the points $\underline{ij}$ and the tropical distances between points, obtaining:

\begin{itemize}
\item $\tdist( \underline{13}, \underline{31})=\tdist( \underline{24}, \underline{42})=2(c-a)$,

\item $\dist( \underline{1}, \underline{13})=\dist( \underline{31}, \underline{3})=\dist( \underline{2}, \underline{24})=\dist( \underline{42}, \underline{4})=a+b-c$,

\item $\tdist( \underline{ij}, \underline{ji})=|a-2b+c|$, for other choices of $i,j$,

\item   $\tdist( \underline{i}, \underline{ij})$ are: $b+c-a$, $c+a-b$, $2c-b$, $2c-a$,  $2b-a$, $2b-c$, $2a-b$ and $2a-c$, for other choices of $i,j$.
\end{itemize}
These distances are all strictly positive, by maximality. Moreover,
\begin{equation}
\underline{13}=\colur{c}{c-a}{a}{0}, \underline{31}=\colur{2a-c}{a-c}{a}{0},
\end{equation}
\begin{equation}
\underline{24}=\colur{-a}{c-a}{c-2a}{0}, \underline{42}=\colur{-a}{a-c}{-c}{0},
\end{equation}

Consider  the point \underline{123}, of coordinates $[c,b,a,0]^t$,  and all the  nine  points around it:  \underline{1}, \underline{2}, \underline{3}, \underline{12}, \underline{21}, \underline{13}, \underline{31}, \underline{23} and \underline{32}.

Assume that $M=-2b$.  Computing coordinates, we get
\begin{equation*}
\underline{12}=\colur{c}{b}{c-b}{0}, \underline{21}=\colur{2b-a}{b}{b-a}{0},
\end{equation*}
 \begin{equation*}
\underline{23}=\colur{b-c}{b}{2b-c}{0}, \underline{32}=\colur{a-b}{b}{a}{0}.
\end{equation*}
Thus
\begin{itemize}
\item  points \underline{123}, \underline{1}, \underline{12} and \underline{13} lie in the classical plane of equation $X_1=c$, making a quadrangle,
\item  points \underline{123}, \underline{31}, \underline{13},  \underline{3} and \underline{32} lie in the classical plane of equation $X_3=a$, making a pentagon,
\item  points \underline{123}, \underline{12}, \underline{21},  \underline{2}, \underline{23}, and \underline{32} lie in the classical plane of equation $X_2=b$, making a hexagon.
\end{itemize}

If $M=-a-c$, then
\begin{equation*}
\underline{12}=\colur{c}{a-b+c}{c-b}{0}, \underline{21}=\colur{c}{b}{b-a}{0},
\end{equation*}
 \begin{equation*}
\underline{23}=\colur{b-c}{b}{a}{0}, \underline{32}=\colur{a-b}{a-b+c}{a}{0}.
\end{equation*}
Thus
\begin{itemize}
\item  points \underline{123}, \underline{1}, \underline{12}, \underline{21} and \underline{13}  make a pentagon in $X_1=c$,
\item  points \underline{123}, \underline{23},  \underline{2} and \underline{21}  make a quadrangle in  $X_2=b$,
\item  points \underline{123}, \underline{23}, \underline{32}, \underline{3}, \underline{31}, and \underline{13} make a hexagon in  $X_3=a$.
\end{itemize}

Around the points \underline{124}, \underline{134}, \underline{234} the situation is similar to \underline{123}, by a change of coordinates.

%
\m
Notice that $\spann C(p)$ is not maximal, if $2b=a+c$.

\begin{rem}
If $M=-2b$, then the vertices of the  lines $L_{ij}$ can be arranged  into  two more circulant matrices of vectors $[0,c,b,c-b]^t$ and $[0,a-b,b,a]^t$, with corresponding columns \underline{23}, \underline{34}, \underline{41}, \underline{12}, and  \underline{43}, \underline{14}, \underline{21}, \underline{32}. 

Similarly, for $M=-a-c$, $[0,b-c,b,a]^t$ and $[0,c,b,b-a]^t$,  \underline{34}, \underline{41}, \underline{12}, \underline{23}, and \underline{32}, \underline{43}, \underline{14}, \underline{21}.

\end{rem}


%
%

\begin{figure}[h]
 \centering
  \includegraphics[keepaspectratio,width=14cm]{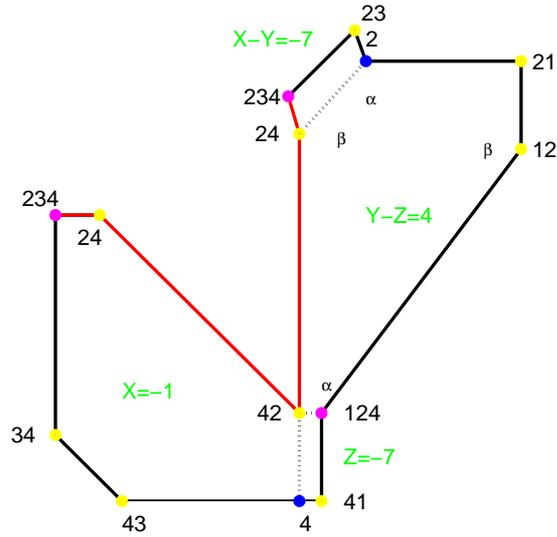}\\
  \caption{Hexagonal adjacent facets of example \ref{ex:B15_y_B12}. Dotted lines must be valley--folded. Red segments must be glued together. Angles $\alpha\simeq125^\circ  16'$ and $\beta\simeq 144^\circ  44'$ are defined in p.\pageref{dfn:alpha}. Other angles in this picture are $45^\circ, 90^\circ$ or $135^\circ$ and $\overline{\alpha}$.}
  \label{figure_15}
\end{figure}



%

\begin{ex} \label{ex:B15_y_B12} For
\begin{equation}
A=\left[\begin{array}{rrrr}
0&-7&-5&-1\\
-8&0&-8&-7\\
-7&-4&0&-7\\
-9&-9&-8&0
\end{array}\right], \qquad A'=\left[\begin{array}{rrrr}
0&-6&-6&-4\\
-4&0&-9&-6\\
-6&-8&0&-7\\
-10&-7&-6&0\\
\end{array}\right]
\end{equation}
 $\spann(A)$, $\spann(A')$ belong both to class \ref{class:third}.
This class is not found in \cite{Joswig_K}.

\m
Let us check the details  for $A$. 
The tropical lines $L_{12}, L_{24}, L_{34}$ are of type $[14,23]$,   $L_{23}, L_{14}$ are of type $[13,24]$ and $L_{13}$ is of type $[12,34]$. Thus, the type--vector is $t=(1,2,3)$. The indices of the lines of type $[13,24]$ are disjoint ($23$ and $14$). Therefore, $\spann(A)$ contains only two hexagonal facets;  see  figure \ref{figure_15}. These are
\underline{2},\underline{12}, \underline{21}, \underline{24}, \underline{42}, \underline{124}
on $X_2-X_3=4$ and
\underline{4}, \underline{24}, \underline{34},\underline{42}, \underline{43}, \underline{234} on $X_1=-1$. The common segment joins \underline{24} to \underline{42}.

The two quadrangular facets are
\underline{2}, \underline{23}, \underline{24}, \underline{234}
on the plane on $X_1-X_2=-7$, and
\underline{4}, \underline{41}, \underline{42}, \underline{124}
on $X_3=-7$.

Notice that at \underline{24},  two hexagons and a quadrilateral meet, and so a total of 12 extremals appear in the configuration of \underline{24}. \label{hex} However, we know that, whatever the configuration
at any oddly generated extremal is,  exactly ten extremals appear in it; see p. \pageref{dfn:ten_points}. 
Therefore, this solid is \textbf{not vertex--transitive}. \label{dfn:no_v_t}

The  angle--vector (see p.\pageref{dfn:angle--vector}) of the hexagon contained in $X_2-X_3=4$ is $\langle 90,\alpha, \beta,90,\alpha,\beta\rangle$, with $\alpha\simeq125^\circ  16'$  and $\beta\simeq 35^\circ 16'$, and $2( 90+\alpha+\beta)=720=180\times(6-2)$ is the sum of the interior angles of a hexagon.
\end{ex}

\begin{ex}\label{ex:AA191}
\begin{equation}
A=\left[\begin{array}{rrrr}
0&-6&-10&-5\\
-6&0&-5&-3\\
-3&-5&0&-6\\
-5&-3&-6&0
\end{array}\right].
\end{equation}
In this example, the type--vector is $(1,2,3)$ and the  polygon--vector is $(0,3,6,3)$, belonging to class 2.
The configuration of the point $\underline{123}$ is $(5.5.5)$ left and the configuration of  $\underline{4}$ is $(6.5.4)$.

We have obtained $A$ as a  perturbation of  the anticirculant symmetric matrix $A(p)$, with $p=[0,-6,-3,-5]^t$. Indeed, in $\spann A(p)$, the configuration is $(5.6.4)$ for points \underline{123} and \underline{4}. If the entry $a_{13}$ changes from $-3$ to $-10$, then   two hyperplanes in figure \ref{figure_13} are crossed, namely those of equations $a_{12}+a_{43}=a_{42}+a_{13}$ and   $a_{13}+a_{24}=a_{23}+a_{14}$, and the class of the tropical tetrahedron changes accordingly.
\end{ex}

Let us finish with a  remark.
Each configuration at $\underline{123}$ and at  $\underline{4}$ are compatible, except for  $(5.5.5)$  left and $(5.5.5)$ right. This follows from all the examples above, together with theorems \ref{thm:left_left}, \ref{thm:left_right} and \ref{thm:right_right},  using symmetry and changes of variables.

%
%
\begin{figure}[h]
 \centering
  \includegraphics[keepaspectratio,width=10cm]{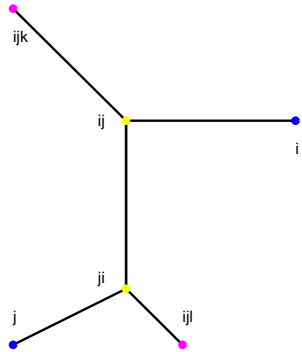}\\
  \caption{Extremal points in the neighborhood of points $\underline{ij}$ and $\underline{ji}$.}
  \label{figure_16}
\end{figure}

\begin{exr}
\begin{itemize}
\item Compute $\tdist (\underline{ijk}, \underline{ij})$, for different $i,j,k$; see figure\ref{figure_16}.
\item  Compare $\spann((A+A')/2)$ with $\spann(A)$ and $\spann(A')$, for different matrices $A,A'$ in this paper.
\end{itemize}
\end{exr}


\section*{Acknowledgements}  We are grateful to our colleague Raquel D\'{\i}az for some very useful questions and to S. Sergeev for drawing our attention to Kleene stars.

We have developed several programs in MATLAB for our tropical computations.  Factorizations of classical determinants have been done with MAPLE.


\centerline{\footnotesize{A. Jim\'{e}nez. Facultad de Matem\'{a}ticas. Universidad Complutense. Madrid. Spain. \texttt{adri\_yakuza@hotmail.com}}}
\centerline{\footnotesize{M. J. de la Puente. Dpto. de Algebra. Facultad de Matem\'{a}ticas. Universidad Complutense. Madrid. Spain. \texttt{mpuente@mat.ucm.es}}}
\end{document}